
\documentclass[reqno,11pt]{amsart}

\usepackage{amsmath,amssymb,amsthm,amsxtra} %
\usepackage{graphicx} 
\usepackage{geometry}

\usepackage{esint}

\usepackage{lipsum} \usepackage[normalem]{ulem}

\usepackage{autonum}

\usepackage[utf8]{inputenc} 

\usepackage{enumitem}

\DeclareMathOperator{\Lip}{Lip}

\newtheorem{theorem}{Theorem}[section] \newtheorem{lemma}[theorem]{Lemma}
\newtheorem{definition}[theorem]{Definition}

\newtheorem{proposition}[theorem]{Proposition}

\newtheorem{remark}[theorem]{Remark}

\newcommand{\CUS}{\Subset}
\renewcommand{\S}{{\mathbb S}} \newcommand{\CN}{{\mathcal N}} %
\newcommand{\ed}[1]{#1} 
\newcommand{\hks}[1]{#1} 

\newcommand{\hkcc}[1]{}
\newcommand{\hkrmrm}[1]{} %
\newcommand{\jf}[1]{{\RoseVYDP{#1}}}

\newcommand{\T}{{\mathbb T}} %
\newcommand{\OL}{\overline} \newcommand{\wtos}{\stackrel{*}{\rightharpoonup}}

\newcounter{margcount} 
\setcounter{margcount}{0} 


\parindent=0cm






\newcommand{\BS}[0]{\backslash}

\newcommand{\FA}[1]{\text{for all $#1$}} 
\newcommand{\FU}[1]{\text{for $#1$}}

\newcommand{\HW}[1]{} 


\renewcommand{\t}{\tilde}
\newcommand{\p}{\partial}
\renewcommand{\d}{\delta}

\newcommand{\SUS}{\subset}

\newenvironment{TC} {\left \{\begin{array}{ll}} {\end{array} \right.}

\newcounter{pcounter}


\renewcommand{\AA}{{\mathcal A}}

\newcommand{\CC}{{\mathcal C}}

\newcommand{\EE}{{\mathcal E}}
\newcommand{\FF}{{\mathcal F}}
\newcommand{\GG}{{\mathcal G}}
\newcommand{\HH}{{\mathcal H}}
\newcommand{\II}{{\mathcal I}}
\newcommand{\JJ}{{\mathcal J}}

\newcommand{\PP}{{\mathcal P}}
\newcommand{\QQ}{{\mathcal Q}}

\renewcommand{\SS}{{\mathcal S}}

\newcommand{\VV}{{\mathcal V}}

\newcommand{\ZZ}{{\mathcal Z}}


\newcommand{\R}{\mathbb{R}}
\newcommand{\N}{\mathbb{N}}
\newcommand{\Z}{\mathbb{Z}}


\newcommand{\alp}{\alpha}
\newcommand{\bet}{\beta}
\newcommand{\gam}{\gamma}
\newcommand{\eps}{\epsilon}
\newcommand{\zet}{\zeta}

\newcommand{\lam}{\lambda}
\renewcommand{\phi}{\varphi}

\newcommand{\sig}{\sigma}

\newcommand{\Gam}{\Gamma}

\newcommand{\Ome}{\Omega}


\newcommand{\nin}{\not\in}

\DeclareMathOperator*{\dv}{\ \nabla \- \cdot \-}

\DeclareMathOperator*{\dist}{dist}

\DeclareMathOperator*{\spt}{spt}

\renewcommand{\iint}{\int\!\!\!\!\int}

\def\Xint#1{\mathchoice
{\XXint\displaystyle\textstyle{#1}}%
{\XXint\textstyle\scriptstyle{#1}}%
{\XXint\scriptstyle\scriptscriptstyle{#1}}%
{\XXint\scriptscriptstyle\scriptscriptstyle{#1}}%
\!\int}
\def\XXint#1#2#3{{\setbox0=\hbox{$#1{#2#3}{\int}$}
\vcenter{\hbox{$#2#3$}}\kern-.5\wd0}}

\def\dashint{\Xint-}


\newcommand{\upref}[2]{\hspace{-0.8ex}\stackrel{\eqref{#1}}{#2}} 

\newcommand{\lupref}[2]{\hspace{0ex} \stackrel{\eqref{#1}}{#2}} 



\usepackage{color}
\definecolor{verylightblue}{rgb}{0.95, 0.95, 0.95}  
\definecolor{lightblue}{rgb}{0.7, 0.7, 1}

\definecolor{eqyellow}{rgb}{0.9375,0.8984,0.5469}
\definecolor{subeqyellow}{rgb}{1,0.9373,0.8353}

\definecolor{mygreen}{rgb}{0.3, 0.6, 0.3} 
\definecolor{verylightgreen}{rgb}{0.2, 0.8, 0.2} 

\definecolor{verydarkgreen}{rgb}{0, 0.2, 0}

\definecolor{darkgreen}{rgb}{0.85, 0.85, 0.85}  
\definecolor{mydarkgreen}{rgb}{0, 0.5, 0} 

\definecolor{camouflagegreen}{rgb}{0.47, 0.53, 0.42}

\definecolor{mybrown}{rgb}{0.85, 0.4, 0.3}
\definecolor{verylightbrown}{rgb}{0.98, 0.72, 0.58}
\definecolor{verydarkbrown}{rgb}{0.44, 0.26, 0.26}

\definecolor{orange}{rgb}{1, 0.5, 0}

\definecolor{BurntOrange}{rgb}{0.9,0.356,0.1}

\definecolor{burlywood}{rgb}{0.87, 0.72, 0.53}


\definecolor{amethyst}{rgb}{0.6,0.4,0.8}


\definecolor{brightcerulean}{rgb}{0.11, 0.67, 0.84}

\definecolor{mydarkred}{rgb}{1,0.086,0.255}

\definecolor{RoseVYDP}{rgb}{0.84,0.086,0.255}
\newcommand{\RoseVYDP}{\color{RoseVYDP}}
\definecolor{dgreen}{rgb}{0, 0.8, 0.5}     

\definecolor{CanaryBRT}{rgb}{1,0.76,0.26}

\definecolor{cyan}{rgb}{0, 1, 1}
\definecolor{verylightgray}{rgb}{0.95, 0.95, 0.95}
\definecolor{verylightgray}{rgb}{0.95, 0.95, 0.95}
\definecolor{verylightred}{rgb}{1, 0.8, 0.78}
\definecolor{verylightyellow}{rgb}{0.99, 0.98, 0.5}
\definecolor{lightgray}{rgb}{0.8, 0.8, 0.8}

\definecolor{cyanprocess}{rgb}{0.0, 0.72, 0.92}

\definecolor{darkcyan}{rgb}{0.0, 0.45, 0.95} 

\definecolor{darkelectricblue}{rgb}{0.33, 0.41, 0.47}

\definecolor{darkmidnightblue}{rgb}{0.0, 0.2, 0.4}

\definecolor{darkpowderblue}{rgb}{0.2, 0.2, 0.6}  

\definecolor{cadet}{rgb}{0.33, 0.41, 0.47}

\definecolor{bole}{rgb}{0.47, 0.27, 0.23}

\definecolor{browntraditional}{rgb}{0.59, 0.29, 0.0}

\definecolor{burgundy}{rgb}{0.5, 0.0, 0.13}


\newcommand{\ignore}[1]{{}}


\parindent0em


\newcommand{\NN}[1]{\|#1\|}
\newcommand{\NNN}[2]{\|#1\|_{#2}}

\newcommand{\NT}[1]{\|#1\|_{L^2}}

\newcommand{\NTL}[2]{\|#1\|_{L^2({#2})}}
\newcommand{\NI}[1]{\|#1\|_{L^\infty}}
\newcommand{\NIL}[2]{\|#1\|_{L^\infty({#2})}}
\newcommand{\NP}[2]{\|#1\|_{L^{#2}}}
\newcommand{\NPL}[3]{{\|#1\|_{L^{#2}(#3)}}}

\newcommand{\cciL}[1]{C_c^{\infty}(#1)}

\makeatletter
\newcommand{\VEC}[2][r]{
  \gdef\@VORNE{1}
  \left(\hskip-\arraycolsep%
    \begin{array}{#1}\vekSp@lten{#2}\end{array}%
  \hskip-\arraycolsep\right)}
\def\vekSp@lten#1{\xvekSp@lten#1;vekL@stLine;}
\def\vekL@stLine{vekL@stLine}
\def\xvekSp@lten#1;{\def\temp{#1}%
  \ifx\temp\vekL@stLine
  \else
    \ifnum\@VORNE=1\gdef\@VORNE{0}
    \else\@arraycr\fi%
    #1%
    \expandafter\xvekSp@lten
  \fi}
\makeatother






















\renewcommand{\QQ}{{Q_\ell}}

\renewcommand{\FU}[1]{\text{for $#1$}}%

\begin{document} 

\title{$\Gam$--limit for two--dimensional charged magnetic zigzag domain
  walls}

\author[H.~Knüpfer]{Hans Knüpfer} \email{hans.knuepfer@math.uni-heidelberg.de}
\address[H.K.]{University of Heidelberg, MATCH and IWR, INF 205, 69120 Heidelberg, Germany}
\author[W.~Shi]{Wenhui Shi} \email{wenhui.shi@monash.edu}
\address[W.S.]{9 Rainforest Walk Level 4, Monash University, VIC 3800, Australia}

\begin{abstract}
  Charged domain walls are a type of domain walls in thin ferromagnetic films
  which appear due to global topological constraints. The non--dimensionalized
  micromagnetic energy for a uniaxial thin ferromagnetic film with in--plane
  magnetization $m \in \mathbb{S}^1$ is given by
  \begin{align} \label{E-abs} %
    E_\eps[m] \ = \ %
    \eps\NT{\nabla m}^2 + \frac 1{\eps} \NT{m \cdot e_2}^2 %
    + \frac{\pi\lam}{2|\ln\eps|} \NNN{\nabla \cdot (m-M)}{\dot H^{-\frac 12}}^2,
  \end{align}
  where $M$ is an arbitrary fixed background field to ensure global neutrality
  of magnetic charges. We consider a material in the form a thin strip and
  enforce a charged domain wall by suitable boundary conditions on $m$. In the
  limit $\eps \to 0$ and for fixed $\lam > 0$, corresponding to the macroscopic
  limit, we show that the energy $\Gam$--converges to a limit energy where jump
  discontinuities of the magnetization are penalized anisotropically. In
  particular, in the subcritical regime $\lam \leq 1$ one--dimensional charged
  domain walls are favorable, in the supercritical regime $\lam > 1$ the limit
  model allows for zigzaging two--dimensional domain walls.
\end{abstract}

\subjclass[2010]{49S05, 78A30, 78M30} \keywords{$\Gam$--limits, materials science,
  domain wall, micromagnetism, zigzag wall.}


\maketitle



\tableofcontents

\section{Introduction and statement of main results}

Magnetic domain walls are transition layers in ferromagnetic
samples where the magnetization vector rapidly rotates and transitions between
two regions with almost constant magnetization. A type of transition layer which
is observed in thin ferromagnetic films with uniaxial in--plane anisotropy are
the so-called \textit{zigzag walls}
(e.g. \cite{2006-JMagMagMat-Engel,2016-JScNmag-Kaplan}). These walls carry a
global charge, usually necessitated by global topological constraints
\cite{HubertSchaefer-Book,DKMO-review}. The competition between the
magnetostatic energy and other more local effects leads to the formation of
two--dimensional zigzag structures as in Fig. \ref{fig-zigzag}. In this
work, we derive a macroscopic limit for a model for zigzag walls in the
framework of $\Gam$--convergence. In the limit, the jump discontinuity is
penalized by an effective anisotropic line energy.

\medskip

Although it is known that two--dimensional transition layers may appear for
systems with vectorial phase function, we are only aware of few analytical
results on such systems
\cite{ContiFonsecaLeoni-2002,FonsecaPopovici-2005,RiviereSerfaty-2001,AlougesRiviereSerfaty-2002,RiviereSerfaty-2003}.
The structure and energy of a charged domain wall in a one-dimensional setting
has been considered by Hubert \cite{Hubert-1979} on the basis of a specific
ansatz function. The structure of the zigzag wall has been experimentally and
numerically investigated e.g. in
\cite{2006-JMagMagMat-Engel,1984-ApplPhyLett-Hamzaoui,2007-PhysRevB-Cerruti,2015-JScNMag-Ukleev,2016-JScNmag-Kaplan}. In
particular, the angle of the zigzag structure and its dependence on temperature
and thickness of the magnetic films have been studied in
\cite{2016-JScNmag-Kaplan,2006-JMagMagMat-Engel}. The dynamics of the zigzag
walls have been investigated numerically in
\cite{1984-ApplPhyLett-Hamzaoui,2007-PhysRevB-Cerruti,2015-JScNMag-Ukleev,2016-JScNmag-Kaplan}. It
has been observed in \cite{1977-JPhysD-Sanders} that the zigzag wall consists of
a combination of Bloch wall core and a logarithmic N\'eel wall tail.

\medskip

\textbf{Setting.} In order to state our results more precisely, we present the
set-up for our model: We consider a two-dimensional model for thin ferromagnetic
films with uniaxial in-plane anisotropy for a magnetic sample in the form of an
infinite strip $Q_\ell:=\R \times \T_\ell$, where $\T_\ell=\R\slash \ell \Z$ is the
one-dimensional torus of length $\ell$ which is assumed to be large. The
periodicity assumption in $x_2$--direction is purely technical, and the choice
of $\ell$ does not affect our results.  We enforce a charged transition layer by
assuming that the magnetization $m \in H_{\rm loc}^1(Q_\ell; \S^1)$ satisfies
the boundary conditions
\begin{align} \label{bc} %
  m \ = \ \pm e_1 \qquad\qquad \text{for  $\pm x_1 \ > \ 1$}.
\end{align}
The boundary condition \eqref{bc} imposes a wall with transition angle of
$\pi$ (modulo $2\pi$).  Also by \eqref{bc}, the total charge
is $\int \nabla \cdot m \ dx = 2\ell \neq 0$, where we recall that
$\nabla \cdot m$ is the magnetic charge density associated with $m$.  The energy
for this problem is given by
\begin{align} \label{EE} %
  \begin{aligned}
      E_\eps[m] \ &= \ %
      \frac 12\int_{Q_\ell} \eps|\nabla m|^2   \ dx + \frac 12 \int_{Q_\ell} \frac 1{\eps} |m \cdot e_2|^2 \ dx \\
      &\qquad\qquad+ \frac{\pi\lam}{2|\ln\eps|} \int_{Q_\ell} \big| |\nabla|^{-\frac 12} \nabla \cdot (m-M) \big|^2 \ dx
         \end{aligned}
\end{align}
for some fixed background magnetization $M \in C^1(Q_\ell;\S^1)$  with $\spt(DM) \Subset Q_\ell$, which is chosen such that the
system is charge-free, i.e.
\begin{align} \label{charge-zero} %
  \int_{\QQ} \nabla\cdot(m-M) \ dx \ = \ 0
\end{align}
(the fractional Sobolev norm in \eqref{EE} is defined in
  \eqref{frac-sob}). We note that the background magnetization is needed to
allow for states with finite energy and that our results do not depend on the
specific choice of $M$ (a possible choice is the transition layer in Lemma
\ref{lem-nonlupper} with $\eps= \bet=1$). The components of the energy in order
are called \textit{exchange energy}, \textit{anisotropy energy} and
\textit{stray field energy}.  The small parameter $\eps > 0$ describes the
relative size of the transition layer with respect to the width of the strip.
The material parameter $\lam \geq 0$, describes the relative strength of
  the stray field and anisotropy energy (for a derivation see Section
  \ref{sus-derivation}).

\medskip

The class of admissible functions $\AA$ for the energy \eqref{EE} is given by
  \begin{align}
    \AA \ = \ \big \{  m \in H^1_{\textrm{loc}}(Q_\ell; \S^1) \ : \ \text{$m$ satisfies \eqref{bc}} \big \}.
  \end{align}
  We extend $E_\eps$ to a functional on the affine space
    $M + L^{1}(Q_\ell;\R^2)$ by setting $E_\eps[m] := + \infty$ for
  $m \nin \AA$. We note that the space does not depend on the specific
    choice of $M$ above.

    \medskip

     The transition layer we consider is called a charged domain wall, since
      by the boundary condition the magnetization $m$ has a net charge
      $\int \nabla \cdot m \ dx \neq 0$ as explained above. In contrast,
      transition layers where the total net charge vanishes are called
      charge-free (cf.  \cite{HubertSchaefer-Book}). Transition layers in thin
    films with in--plane rotation, as considered in this work, of the
    magnetization are also called N\'eel walls.

\medskip

\textbf{Main result and discussion.} The main result in this paper is the
derivation of an effective model for the energy \eqref{EE} in the macroscopic
limit $\eps \to 0$ for any fixed $\lam \geq 0$.  In this limit, both the local
and the nonlocal part of the energy concentrate on the one-dimensional jump set
of the magnetization. Moreover, the stray field energy yields an anisotropic
contribution to the penalization of the jump discontinuity:
\begin{theorem}[$\Gam$--convergence] \label{thm-gamma} \text{} %
  Let $\lam \geq 0$. For any sequence $m_\eps \in \AA$, $\eps\to 0$, with
  \begin{align} \label{meps-bound} %
    \limsup_{\eps > 0} E_\eps[m_\eps] \ \leq \ K \ < \infty
  \end{align}
  there is a subsequence $\eps_k \to 0$ with $m_{\eps_k} \to m_0$ in $L^1$
  for some $m_0 \in \AA_0$ as $k \to \infty$, where $\AA_0$
  $=$ $\{$ $m \in BV_{\textrm{loc}}(Q_\ell; \{\pm e_1\})$ : $m$ satisfies \eqref{bc} $\}$.
  Furthermore, the energies $E_\eps$ $\Gam$--converge to $E_0$ in
  the $L^1$-topology, where
  \begin{align} \label{E0} 
    E_0[m] = 
    \displaystyle 2 \int_{\SS_m} \left(1+\lambda |e_1\cdot n|^2
    \right)\chi_{\{|e_1\cdot n|\leq \frac1{\sqrt{\lam}}\}} + 2
    \sqrt{\lambda} |e_1\cdot n| \chi_{\{|e_1\cdot n|>
    \frac1{\sqrt{\lam}}\}}\ d\HH^1 \quad
 \end{align}
 if $m \in \AA_0$ and $E_0[m] = +\infty$ otherwise for
 $m \in M + L^1(Q_\ell;\R^2)$. Here, $\SS_m$ is the jump set of $m$ with the
 measure theoretic unit normal $n$. In particular,
  \begin{enumerate}
  \item For any sequence $m_\eps \in \AA$ with $m_\eps \to m \in \AA_0$ in $L^1$ we have
    \begin{align} \label{low-bd} %
      \liminf_{\eps > 0} E_\eps[m_\eps] \ \geq \ E_0[m].
    \end{align}
  \item For any $m \in \AA_0$, there is a sequence $m_\eps \in \AA$ with $m_\eps \to m$ in $L^1$ and
    \begin{align}
      \limsup_{\eps > 0} E_\eps[m_\eps] \ \leq \ E_0[m].
    \end{align}
  \end{enumerate}
\end{theorem}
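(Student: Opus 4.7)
My plan is to follow the classical three-step $\Gam$-convergence strategy: compactness, the liminf inequality, and the construction of a recovery sequence. For compactness I would apply a Modica--Mortola trick adapted to the constraint $m\in\S^1$. Setting $\Phi(m):=\int_{-1}^{m_1}\sqrt{1-t^2}\,dt$, one has $|\nabla(\Phi\circ m)|\le|m\cdot e_2|\,|\nabla m|$ pointwise, so Young's inequality applied to the first two terms of $E_\eps$ yields $2\|\nabla(\Phi\circ m_\eps)\|_{\MM}\le E_\eps[m_\eps]$. Combined with the boundary condition~\eqref{bc} and BV compactness, this gives a subsequence $m_{\eps_k}\to m_0$ in $L^1_{\text{loc}}$; the $L^2$-control on $m_\eps\cdot e_2$ from the anisotropy term then forces $m_0\in\AA_0$.

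For the liminf inequality I would localize near the jump set $\SS_{m_0}$ and use a blow-up argument. At a point of $\SS_{m_0}$ with measure-theoretic normal $n$ and $s:=|e_1\cdot n|$, the rescaled sequence converges to a planar transition layer across $n^\perp$. The Modica--Mortola inequality provides the base local contribution $2\HH^1(\SS_{m_0})$. In the subcritical regime $s\le 1/\sqrt{\lam}$, a tilted 1D N\'eel profile is optimal: its logarithmic stray-field tail, precisely cancelled by the prefactor $\pi\lam/(2|\ln\eps|)$, contributes the additional surface density $2\lam s^2$, yielding $f_1(s)=2(1+\lam s^2)$. In the supercritical regime $s>1/\sqrt{\lam}$ the single-profile bound $f_1(s)$ is too loose, and the sharp bound $f_2(s)=2\sqrt{\lam}\,s$ must instead be extracted from the convex envelope of the one-homogeneous extension $F(v):=|v|\,f(|v_1|/|v|)$, using the nonlocal character of the $\dot H^{-1/2}$-norm together with the flux identity $\int_{\SS_{m_0}}|e_1\cdot n|\,d\HH^1\ge\ell$ enforced by~\eqref{bc}.

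For the recovery sequence I would first use a polyhedral approximation to reduce to $m_0\in\AA_0$ with a finite polygonal jump set, and then construct $m_\eps$ edge-by-edge, adjusting by the background $M$ so that charge neutrality~\eqref{charge-zero} is preserved. On an edge with $s\le 1/\sqrt{\lam}$ I would insert an $\eps$-scaled 1D N\'eel profile along the edge normal; a direct calculation then gives the limit cost $f_1(s)$ per unit length. On an edge with $s>1/\sqrt{\lam}$ I would use a two-scale construction: at an intermediate mesoscopic scale $\delta_\eps$ with $\eps\ll\delta_\eps\ll 1$, replace the straight edge by a sawtooth whose facets have the critical slope $|e_1\cdot n_\pm|=1/\sqrt{\lam}$ and dress each facet with the $\eps$-scale N\'eel profile. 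The partial cancellation of adjacent facet charges in the $\dot H^{-1/2}$-norm reduces the averaged stray-field cost to exactly $f_2(s)$ per unit macroscopic length.

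The hardest step is the matching lower bound in the supercritical regime $s>1/\sqrt{\lam}$: a naive blow-up to a planar profile yields only $f_1(s)>f_2(s)$, so obtaining the sharp constant $2\sqrt{\lam}\,s$ requires a genuinely nonlocal convexification argument combining the logarithmically normalised $\dot H^{-1/2}$-norm with the global flux constraint. A secondary technical difficulty is the careful calibration of the two logarithmic scales ($|\ln\eps|$ from the N\'eel tail and $|\ln\delta_\eps|$ from the mesoscopic zigzag) in both the lower bound and the recovery construction, so that the lower and upper bounds produce matching constants.
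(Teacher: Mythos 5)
Your compactness step is essentially the paper's (the Modica--Mortola inequality $\int|\nabla u_\eps|\le\frac12\int(\eps|\nabla m_\eps|^2+v_\eps^2/\eps)$ plus BV compactness and the boundary condition \eqref{bc}), and your recovery construction is in the right spirit: polygonal approximation, $\eps$-scale one--dimensional profiles across each edge, and a sawtooth at the critical slope $|n\cdot e_1|=\lam^{-1/2}$ for supercritical edges. Two remarks there: the paper performs the sawtooth replacement at the level of the \emph{limit} functional (Lemma \ref{lem-tOme} shows $E_0$ is unchanged) and then diagonalizes, which avoids calibrating an $\eps$-dependent mesoscale $\delta_\eps$ against $|\ln\eps|$; and the gain of the zigzag is not a ``partial cancellation of adjacent facet charges'' --- all facets carry charge of the same sign --- but simply that the same total charge is spread over a longer interface at the critical density, while cross-facet interactions are $o(1)$ after the $|\ln\eps|^{-1}$ normalization (this is the content of Lemma \ref{lem-self-inter}, which also needs the corner regions and the singular-integral representation of the $\dot H^{-\frac12}$-norm, absent from your sketch).

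The genuine gap is the liminf inequality \eqref{low-bd}. Your blow-up hypothesis --- that the rescaled $m_\eps$ converges to a planar transition layer --- is unjustified, and it fails exactly for the sequences that matter in the supercritical regime, which oscillate at scales intermediate between $\eps$ and $1$; note also that your logic is inverted: a valid lower bound of $2(1+\lam s^2)$ for $s>\lam^{-1/2}$ would be \emph{too strong} and would contradict part (ii), so the problem is not that the planar bound is ``too loose'' but that it is unavailable, while in the subcritical regime one must still rule out that oscillation helps. The proposed repair --- ``convex envelope of $F(v)=|v|f(|v_1|/|v|)$ plus the flux identity $\int_{\SS_m}|n\cdot e_1|\,d\HH^1\ge\ell$'' --- contains no mechanism: the flux constraint is global and is used in the paper only to minimize the limit functional (Proposition \ref{prp-limit}), not to prove \eqref{low-bd}, which must hold locally for every $m\in\AA_0$; and invoking convexification does not bound the nonlocal term from below with the sharp constant for an arbitrary oscillating competitor. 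What is actually needed (and is the paper's main novelty) is: (a) quantify the asymptotic excess interface by $\alp:=\liminf\|Du_\eps\|(B)/\|Du\|(B)\ge1$ on small balls; (b) prove $\liminf N_\eps\ge\frac{2\lam}{\alp}\int_{\SS_m\cap B}|n\cdot e_1|^2\,d\HH^1$ by the dual characterization of the $\dot H^{-\frac12}$-norm, tested against $\Phi_\eps=\eta_\eps(d_{\gam_\eps})$ with a logarithmic profile $\eta_\eps$ and a \emph{separating curve} $\gam_\eps$ obtained by modifying a level set of $u_\eps$ so that its capacity is controlled by its length --- this rests on the Gauss--Bonnet level-set estimates $\HH^1(d_{\gam_\eps}^{-1}(t))\le2\HH^1(\gam_\eps)$ (Lemma \ref{lem-level-simple}, Theorem \ref{thm-level-local}, Lemma \ref{lem-separate2}), yielding the sharp constant $\pi/|\ln\eps|$ in Proposition \ref{prp-RHS}(i) and the charge-capture identity in Proposition \ref{prp-LHS}; and (c) observe that $\alp+\lam s^2/\alp\ge f(\sqrt\lam s)$ by \eqref{def-f}, which treats both regimes at once, so no separate ``convexification'' step occurs; finally the localization itself requires controlling the interaction between balls and with their complement (Lemma \ref{lem-inter}, Lemma \ref{lem-inter-energy}). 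Without an argument of this type, or an equivalent substitute for steps (a)--(b), your proposal does not establish the lower bound.
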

It is well--known that the exchange
energy and  anisotropy energy together asymptotically lead to an isotropic
penalization of the length of the jump set $\SS_m$
\cite{AnzellottiBaldiVisintin-1991}. However, in our model the presence of the
magnetostatic energy yields an additional penalization for the jump
discontinuity contributing to the limit energy, which depends on the line
charge density given by the jump of the normal derivative of $m$ over the jump
set $\SS_m$ ($= 2 |n \cdot e_1|$ in our setting). We note that both the local
terms and the nonlocal stray-field energy contribute to the limit energy in
leading order.

\medskip
 
The minimal energy for given $\lam \geq 0$ for the limit problem is
\begin{align}  \label{e-ground} %
  e(\lam) \ %
  := \ \min_{m \in \AA_0} E_0[m]  \ %
  = \ 2 \ell
  \begin{cases}
    1+\lambda \qquad &\text{ for }\lambda\leq 1,\\
    2\sqrt{\lambda} &\text{ for } \lambda>1
  \end{cases}
\end{align}
(see Proposition \ref{prp-limit}). The crossover at the critical value
$\lam = 1$ in \eqref{e-ground} signifies that zigzag configurations are
energetically preferable on the $\eps > 0$ level for $\lam > 1$ but not for
$\lam \leq 1$, when the isotropic part of the limit energy
dominates. Correspondingly, minimizers for the limit energy are degenerate for
$\lam > 1$: In this case any jump set $\SS_m$ which can be written as a graph in
$x_2$ and with measure theoretic normal $n$ satisfying
$|n \cdot e_1| \geq \lam^{-\frac 12}$ is a global minimizer of the energy, see
Fig. \ref{fig-zigzag}b). In particular, for $\lam > 1$ the set of minimizers of
the limit problem includes zigzag configurations. For $\eps > 0$, these
minimizers can be approximated by zigzag--shaped transition layers with normal
satisfying $|n \cdot e_1| = \lam^{-\frac 12}$ and with rapid oscillation in
tangential direction (Lemma \ref{lem-tOme}).

\medskip

\begin{figure}
  \centering %
  a)
  \includegraphics[height=4cm]{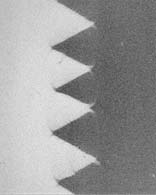} \qquad %
  b)
  \includegraphics[height=3.7cm]{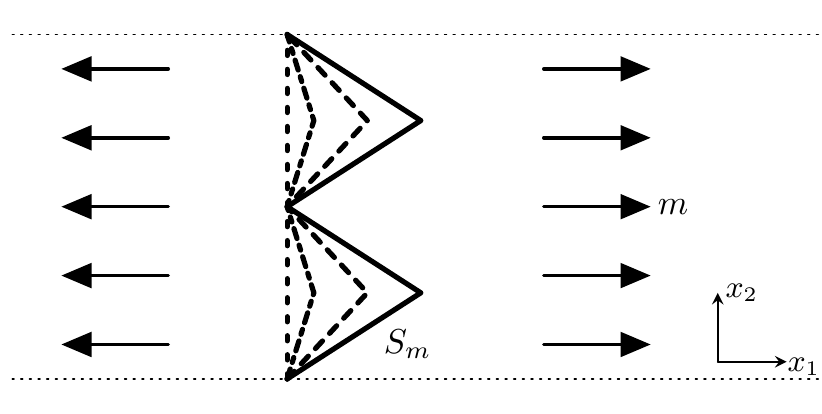} \qquad %
  \caption{a) Experimental picture of zigzag wall separating antiparallel
      domains \cite[Fig. 5.61]{HubertSchaefer-Book} b) Different global
    minimizers for the limit energy in the supercritical case. For all
    minimizers, the normal of the jump set satisfies
    $|n\cdot e_1|\geq \lambda^{-\frac 12}$.}
  \label{fig-zigzag}
\end{figure}

\medskip

Transition layers between two phases are usually
  one--dimensional --- such as e.g. for
  Ginzburg--Landau type energies and the Aviles-Giga energy
  \cite{DKMO-2001}. While it is known that transition layers for models with
  vectorial phase field function might be two--dimensional, only few
    analytical results exist for this case. In particular, we are not aware of
  another analytical result for a thin--film micromagnetic energy as in
  \eqref{EE}. Two--dimensional structures for related micromagnetic energies are
  investigated for the cross--tie wall by Alouges, Rivi\`ere and Serfaty in
  \cite{AlougesRiviereSerfaty-2002,RiviereSerfaty-2003} and for a zigzag
  transition layer by Moser in \cite{Moser09} and by Ignat and Moser in
  \cite{IgnatMoser-2012}. In these works, a setting is considered where the
  magnetization is constant in one coordinate direction and where the nonlocal
  energy is given by the square of the $H^{-1}$--norm (relevant for bulk
  materials). In particular, Ignat and Moser \cite{IgnatMoser-2012} consider
  non--charged transition layers for a prescribed transition angle and derive a
  $\Gam$--limit for the energy in the macroscopic regime $\eps \to 0$ and based
  on the weak$^*$ $L^\infty$--topology. In this situation, the strong
  penalization of the stray field energy enforces divergence free configurations
  in the limit and leads to zigzag configurations for small transition angles.
  Different from our situation, the nonlocal energy does not contribute to the
  limit energy.  The proof in \cite{IgnatMoser-2012} is based on the entropy
  method. Since we consider the critical $\frac{1}{|\ln\eps|}$ scaling of the
  nonlocal terms which allows for charged walls in the limit and where the
  nonlocal energy contributes to the limit energy (different from
  \cite{IgnatMoser-2012}), the entropy method does not seem to apply to our
    model.

\medskip

For thin films, charge--free transition layers with a $\pi$ transition angle (of
N\'eel wall type) have been investigated e.g. in
\cite{Melcher-2003a,Melcher-2004,ignat08,IgnatKnuepfer-2010}. In particular, in
\cite{DKO-2006}, DeSimone, Otto and the first author show that for the
charge--free N\'eel wall, one--dimensional transition layers are asymptotically
energetically optimal. This explains why zigzag type transition layers do not
appear for charge--free N\'eel wall domain walls. On the other hand, for the
charged N\'eel wall considered in this work, the concentration of line charges
leads to formation of zigzag patterns as described above. We note that both
charged and charge-free N\'eel wall exhibit a characteristic logarithmically
decaying tail \cite{Melcher-2003a,Melcher-2004,1977-JPhysD-Sanders}. However,
the leading order contribution to the energy is carried in the tail for the
charge--free N\'eel wall, while it is concentrated in the core for the charged
N\'eel wall.

\medskip

Our argument for the liminf inequality in the $\Gamma$-convergence is based on a duality
argument and the construction of a suitable test function as in
\cite{DKO-2006}.  For the construction of the test function there are
fundamental differences compared to the previous work, where the test
  function is a characteristic function constructed by a Poincar\'e--Bendickson
  argument: In particular, the test function in this work is supported in the
neighborhood of a so-called separating curve with logarithmic decaying
profile. In the construction of the test function, we need to develop and use some
new level set estimates. The detailed strategy of our proof is described in
Section \ref{sus-strategy}.
\begin{remark}[One--dimensional setting]
  One--dimensional transition layers for \eqref{bc}--\eqref{EE} are also simply
  called charged domain walls. The energy (with $\ell = 1$) then takes the form
  \begin{align} \label{E-1d} %
    \frac 12\int_\R \eps |\frac {dm}{d x_1}|^2 + \frac 1\eps |m \cdot e_2|^2 \
    d x_1 + \frac {\pi\lambda} {2|\ln \eps|} \int_\R \big| |\frac d{d x_1}|^{\frac
    12}(m - M) \cdot e_1\big|^2 \ d x_1.
  \end{align}
  The limit energy then simply counts the number of jumps of the
  one--dimensional transition layer $m \in \AA_0$, and each jump is penalized by
  the factor $2(1+\lambda)$. The one-dimensional energy
  \eqref{E-1d} has been analyzed in \cite{Hubert-1979} in terms of specific
  ansatz functions.
\end{remark}
Finally, we note that the existence of minimizers for the energy for the
three--dimensional micromagnetic model has been shown by Anzellotti, Baldi and
Visintin in \cite{AnzellottiBaldiVisintin-1991}, and the arguments can be easily
adapted to our setting.  We also note that variants of the Modica-Mortola model
in the presence of nonlocal interactions have been considered e.g. in
\cite{AB98}. The competition between interfacial and nonlocal energies also
plays a role for the Ohta-Kawasaki model. We mention a few, but by far not
exhaustive list of related works, in which this energy is studied in a periodic
\cite{ChoPel-10,ChoPel-11,GolMurSer-15} or bounded domain
\cite{AceFusMor-13,KnuMur-2013,KnuMur-2014,GolMurSer-15,BonCri-14,JulPis-17}.

\medskip

\textbf{Notation.} Throughout the paper, we denote by $C$ a positive universal
constant unless specified; $\eps\in(0,\frac{1}{4})$ is a small parameter and
$\ell \gg 1$ is a large parameter. \ed{For any set $E \SUS \R^2$, we write
$d_E(x) := \dist(x,E)$ for the distance to this set, noting that the distance to
the empty set is infinite.} For a set $E \subset \R^2$ we write
$\mathcal{N}_t(E):= \{ x \in \R^2 : d(x,E) < t \}$ for its
$t$-neighborhood. The $k$--dimensional Hausdorff measure of the set
  $E \SUS \R^n$ is denoted by $\HH^k(E)$.

\medskip

\textit{BV functions, sets of finite perimeter:} Given an open subset
$U\subset \R^n$, $BV_{loc}(U)$ denotes the space of functions which have
locally bounded variation in $U$ (see \cite{Maggi-Book} for further details). A
measurable set $F\subset \R^n$ has locally finite perimeter in $U$ if the
characteristic function $\chi_F\in BV_{loc}(U)$. We let $\|Du\|(U)$ denote the
total variation measure in $U$ and $\|D\chi_{F}\|(U)$ denote the relative
perimeter of $F$ in $U$. The \textit{reduced boundary} $\p^* F$ of $F$
is the set of points $x \in \spt D\chi_F$ where the measure theoretic outer
normal $n(x)$ exists. Any function $u \in L^1_{loc}(U)$ has an approximate limit
for a.e. $x\in U$, i.e.  $\lim_{\rho\to 0+}\fint_{B_\rho(x)}|u(y)-z| dy =0$ for
some $z\in \R$. The jump set $\t\SS_u$ is the set of points at which the
approximate limit  does not exist. For $u\in BV_{loc}(U;\{\pm 1\})$, we write
$\{x\in U: u(x)=1\}$ for the set of points where the approximate limit of $u$ is
$1$. In this case, the jump set $\t\SS_u$ is $\HH^{n-1}$-a.e. equal to
$\SS_u:=\p^*\{x\in U:u(x)=1\}$, the reduced boundary of this
set. Furthermore, $\NN{Du}(K)=2\HH^{n-1}(\SS_u \cap K)$ for any $K\Subset
U$.

 \medskip
 
 \textit{Some notions for functions and sets on $Q_\ell=\R\times \T_\ell$:}
 We note that there is a canonical projection $\Pi : \R^2 \to
   Q_\ell$. Correspondingly, we identify functions on $Q_\ell$ with
   $Q_\ell$--periodic functions on $\R^2$. Similarly, any set $\Ome \SUS Q_\ell$
   can be identified with its periodic extension onto $\R^2$. For
 $\phi\in L^1(Q_\ell)$ we write
 \begin{align}
   \widehat \phi(\xi)\ := \ \frac{1}{\sqrt{2\pi \ell}}\int_{Q_\ell} e^{i\xi\cdot
   x}\,\phi(x)\,dx \qquad\qquad %
   \text{where $\xi\in\R\times\frac{2\pi}{\ell}\Z$,}
 \end{align}
 i.e. we use the Fourier transformation in $x_1$ and the Fourier series in
 $x_2$. We will use the short notation
 $\int_{\R\times\frac{2\pi}{\ell}\Z}\cdot d\xi %
 : = \ \sum_{\xi_2\in\frac{2\pi}{\ell}\Z}\int_\R \cdot
 d\xi_1$. Plancherel's identity then takes the form
   \begin{align} \label{plancherel-id} %
     \int_{Q_\ell} \OL{\phi(x)} \psi(x) \ dx \ %
     = \ \int_{\R \times \frac{2\pi}{\ell}\Z} \OL{\widehat \phi(\xi)} \widehat \psi(\xi) \ d\xi.
   \end{align}
   The fractional Sobolev norms on $Q_\ell$ for $\alp\in \R$ are defined by
\begin{align} \label{frac-sob} %
  \int_{Q_\ell} \big||\nabla|^\alp\phi \big|^2 \ dx \ %
  := \  \int_{\R\times\frac{2\pi}{\ell}\Z} \big| |\xi|^{\alp}|\widehat
   \phi(\xi)| \big|^2 \ d\xi.
\end{align}
In the appendix we give two more representations of the homogeneous $H^{\frac 12}$--norm.

\subsection{Overview and strategy for the proofs} \label{sus-strategy} %

In this section, we give an overview of the proofs for our results. In
particular, we describe the strategy for the proof of the liminf inequality
in Theorem \ref{thm-gamma}, which represents the main novelty in this paper.
Solution for the limit problem is given in Proposition \ref{prp-limit} in
Section \ref{sec-limit}. 

\medskip

\textbf{Compactness.} The compactness follows by a well--known argument
  (see \cite{AnzellottiBaldiVisintin-1991}). For the sequence
$m_\eps = (u_\eps,v_\eps) \in \AA$ from Theorem \ref{thm-gamma}, we have
\begin{align}\label{eq-compactness}
  \int_{Q_\ell} |\nabla u_\eps| \ dx \ %
  &\leq \ \frac 12 \int_{Q_\ell}  \Big(\frac{\eps|\nabla u_\eps|^2}{1-u_\eps^2} + \frac {1-u_\eps^2}{\eps} \Big)  \ dx \ %
    = \ \frac 12 \int_{Q_\ell} \Big(\eps |\nabla m_\eps|^2 + \frac {v_\eps^2}{\eps} \Big) \ dx \ \leq \ K.
\end{align}
Together with the boundary conditions \eqref{bc} it follows that $v_\eps \to 0$
in $L^1(Q_\ell)$. After selection of a subsequence, we also have
$u_\eps \to u \in BV_{\rm loc}(Q_\ell;\{ \pm 1\})$ in $L^1(Q_\ell)$. Since the
boundary conditions are still satisfied in the limit, we get
$m_\eps \to m = (u,0)$ in $L^1$ for $m \in \AA_0$.

\medskip

\textbf{\ed{Liminf inequality.}} We describe the strategy of the proof, the
details are given in Section \ref{sec-lower}: We consider a sequence
$m_\eps \in \AA$ with $m_{\eps} \to m = (u,0) \in \AA_0$ in $L^1$ for
$\eps \to 0$ such that \eqref{meps-bound} holds. The jump set of $m$ (or
equivalently of $u$) is $\HH^1$ a.e. equal to $\SS_m:= \p^* \{x\in Q_\ell: u(x)=1\}$.  The
unit outer normal of $\{x\in Q_\ell: u(x)=1\}$ along $\SS_m$ is denoted by $n$.
 
\medskip

\textit{Step 1: Localization argument.} The first step of the proof is a
localization argument (see Section \ref{sus-localize} for details). The idea
is to choose a family of pairwise disjoint balls $B_k \SUS Q_\ell$ with
sufficiently small radius which almost covers $\SS_m$, and suitable cut-off
functions $\chi_{\eps,k}$ with $\spt \chi_{\eps,k}\Subset B_k$ and
$\chi_{\eps,k}\to \chi_{B_k}$ as $\eps \to 0$. We write $E_\eps$  in the form
 \begin{align}
   E_\eps[m_\eps] \ = \ \sum_{k} \Big(\nu_\eps(B_k) + N_\eps[\chi_{\eps,k}] \Big) + R_\eps.
 \end{align}
 The two terms $\nu_\eps(B_k)$ and $N_\eps[\chi_{\eps,k}]$ represent the
   interfacial energy in the ball $B_k$ and the self--interaction energy within
 the ball respectively, i.e.
 \begin{align}
   \nu_\eps(B_{k}) \ %
   &:= \ \frac 12\int_{B_{k}} \Big(\eps|\nabla m_\eps|^2 + \frac{v_\eps^2}{\eps}  \Big) \  dx, \\
   N_\eps[ \chi_{\eps,k}] \ %
   &: = \ \frac{\lam}{4|\ln\eps|} \iint_{\R^2\times \R^2}\frac{(\chi_{\eps,k}\sig_\eps)(x)(\chi_{\eps,k}\sig_\eps)(y)}{|x-y|}\ dxdy, \label{def-Neps}
 \end{align}
 where $\sig_\eps:= \nabla \cdot (m_\eps-M)$ is the magnetic charge
 density. \hks{Here and in the sequel with a slight abuse of notation we identify
 $\chi_{\eps,k}$ with the cut-off function associated with a single
 representative of the ball $B_k$ in $\R^2$.}  The remainder $R_\eps$ can be
 estimated from below as a lower order term if the balls are chosen carefully
 (cf. Proposition \ref{prp-localization}).  Hence, the estimate is reduced to
 local estimates on the balls $B_k$.

 \medskip
 
 \textit{Step 2: Local estimate of leading order terms.} %
 We claim that for any ball $B:=B_k$ and corresponding cut--off
 function $\chi_\eps:=\chi_{\eps,k}$ with $\spt \chi_\eps \CUS B$, we have
 \begin{align}
   \liminf_{\eps\to  0}\ \Big(\nu_\eps(B) +  N_\eps[\chi_\eps] \Big)  \ \geq \ %
   2 \int_{\SS_m \cap B} f(\sqrt{\lam}|n \cdot e_1|) \ d\HH^1,
   \end{align}
   where $f$ is the energy density of the limit functional, i.e. 
 \begin{align} \label{def-f} %
   f(s) \ := \ \left(1+s^2 \right)\chi_{\{s\leq 1\}} + 2s \chi_{\{s>1\}} \ %
   = \ \inf_{\alp \geq 1} \Big[ \alp + \frac{s^2}{\alp} \Big] \ %
  \qquad \text{for $s > 0$.}
 \end{align}
 The lower bound  is determined by a balance between interfacial and
 magnetostatic terms: We first note that by \eqref{eq-compactness} and the lower
 semi-continuity of the BV norm, we have
 \begin{align} \label{def-alp} %
   \liminf_{\eps \to 0} \nu_\eps(B) \ \geq \ \liminf_{\eps \to 0} \|Du_\eps
   \|(B) \ =: \ \alp \NN{Du}(B)
\end{align}
for some $\alp\geq 1$, where the difference $\alp-1 \geq 0$ quantifies the local
presence of oscillations  as $\eps \to 0$. In view of the second identity in \eqref{def-f},
it is then enough to show that
\begin{align} \label{N-need}
  \liminf_{\eps\to  0} N_\eps[\chi_\eps] \ \geq \    \frac{2\lam}{\alp} \int_{\SS_m \cap B} |n \cdot e_1|^2 \ d\HH^1.
\end{align}

\medskip

\textit{Step 3: Estimate of main nonlocal term.} For the estimate of
\eqref{N-need}, we first note that 
\begin{align}
N_\eps[\chi_\eps]=\frac{\pi\lambda}{2|\ln\eps|}\int_{\R^2} \big||\nabla|^{-\frac 12}(\chi_{\eps} \sig_\eps) \big|^2 \ dx.
\end{align}
Then we use the dual
characterization of the $\dot{H}^{-\frac 12}(\R^2)$-- norm, i.e. we use that for
any $\Phi\in \dot H^{\frac 12}(\R^2)$ we have 
\begin{align} \label{dual-intro} %
  \int_{\R^2} \chi_\eps^2 \sig_\eps \Phi \ dx \ %
  \leq \ %
  \Big(\int_{\R^2}\big| |\nabla|^{-\frac 12}(\chi_\eps\sig_\eps)\big|^2 \ dx
  \Big)^{\frac 12}%
  \Big( \int_{\R^2} ||\nabla|^{\frac 12} (\chi_\eps\Phi)|^2 \ dx \Big)^{\frac
  12}.
\end{align}
For the construction of the test functions, we choose a cut-off function
$\eta_\eps \in \cciL{\R}$ with logarithmically decaying profile (see
Definition \ref{def-psieps}). The functions $\Phi_\eps$ have the form
\begin{align} \label{def-Phieps-0} %
  \Phi_\eps(x) \ := \ \eta_{\eps}( \dist{(\gam_\eps,x)}), 
\end{align}
where $\gam_\eps$ are carefully modified level sets
$\{u_\eps=t\}$ in $B_k$ with certain $t$ such that in particular the following
properties hold:
\begin{enumerate}
\item\label{gam-seplen} %
  For each $\eps > 0$, the set $\gam_\eps$ separates the regions where
  $m_\eps \approx e_1$ and $m_\eps \approx - e_1$ (up to small sets) and
  converges to $\SS_m$ in a weak sense as $\eps \to 0$.  Furthermore, the sets
  $\gam_\eps$ have uniformly controlled length.
\item\label{gam-cap} %
  the length of level sets of certain distance from $\gam_\eps$ is
  controlled, i.e.
  $\HH^1(d_{\gam_\eps}^{-1}(t) \cap B) \leq 2\HH^1(\gam_\eps)$ for  a.e.
  $t\in (0, \d_0)$ and some $\d_0 > 0$.
\end{enumerate}
The precise statements and the details of the construction are given in Sections
\ref{sus-sepcurve} and \ref{sus-levelset} (see Lemma \ref{lem-separate2}).  To
construct the sets $\gam_\eps$ such that they satisfy \ref{gam-seplen} it would
be enough to choose them as suitably chosen level sets of $u_\eps$.  To achieve
\ref{gam-cap} we modify $\gam_\eps$, and we use and adapt the level sets
estimates from \cite{BZ88}, which are based on the Gauss--Bonnet theorem and
rely crucially on the two--dimensionality of our problem.

\medskip

Estimate \eqref{N-need} then follows by deriving sharp
estimates for the terms in \eqref{dual-intro}, i.e.
 \begin{align}
   \int_{\R^2} \big| |\nabla|^{\frac 12} (\chi_\eps \Phi_\eps )\big | ^2 \ dx \ %
   &\leq  \ \Big(\frac{\pi}{|\ln {\eps}|}+ o(\frac 1{|\ln \eps|})\Big) \HH^1(\gam_\eps) , \label{gen-1}\\
   \int_{\R^2} \chi_\eps^2 \Phi_\eps\sig_\eps\ dx \ %
   &\geq   \ 2\int_{\SS_m}\chi_\eps^2 (n_{\eps} \cdot e_1) \ d\HH^1 - o(1) \label{gen-2}
 \end{align}
 for $\eps \to 0$. It is essential that we get the precise leading order
 constant in both \eqref{gen-1}--\eqref{gen-2}. We note that \eqref{gen-1} means
 that the capacity of the curves $\gam_\eps$ is asymptotically controlled by
 their length, where we recall that the capacity of a set is the reciprocal of
 the minimal stray field energy created by a charge distribution with total
 charge one on the set (cf. e.g. \cite{LiebLoss-Book}). Thus the assertion
 \ref{gam-cap} is the right estimate for the derivation of \eqref{gen-1}.

\medskip

\textbf{Limsup inequality.} The limsup inequality follows by
  constructing a suitable recovery sequence. This recovery sequence is
constructed by patching one--dimensional transition layers together. Additional
care is taken in the supercritical case, where we replace transition layers with
large slopes by fine combination of suitable zigzags. The estimate of the stray
field energy relies on the singular integral representation for the
$\dot{H}^{-\frac 12}$-norm (cf. Lemma \ref{lem-sing_H12}). Using this
representation, we can localize the self--interaction term to each patch which
yields the leading order contribution of the energy. It can be furthermore shown
that the interaction energy between different patches is of lower order. The
construction and estimates for the recovery sequence are given in Section
\ref{sec-recovery}. 

\subsection{Formal derivation of the model} \label{sus-derivation}

Before we give the proofs of the main results, we show how the model
  \eqref{EE}--\eqref{charge-zero} can be derived from a non-dimensionalization
of the underlying three-dimensional physical model
\cite{LandauLifshitz-1935} (for similar arguments see
e.g. \cite{Garcia-1999phd,Melcher-2003a}). We apply some heuristic
simplifications which we believe can be justified rigorously.

\medskip

We consider a uniaxial ferromagnetic in the shape of a thin plate of the form
$\Ome = \R^2 \times [0,t]$ and magnetization
$\OL m = (m, m_3):\R^2\times [0,t]\to \mathbb{S}^2$. \ed{The single, energetically
preferred magnetization direction of the material in consideration is given by
the $e_1$--axis. } Since we are interested in a charged transition layer, we
enforce this transition by boundary conditions, i.e. we assume $\OL m = \pm e_1$
for $\pm x_1> w$ for some $w>0$. In order to formulate the problem, we
assume that $\OL m$ is $L$--periodic in $x_2$ direction for some large
periodicity $L$, noting that our estimates do not depend on $L$. Let
$Q_L:=\R\times \T_L$, where $\T_L:=\R / (L \Z)$. As described before, we assume
that there is a background magnetization $\OL M$ which ensures that the system
is charge free, i.e. the analogous assumption to \eqref{charge-zero} holds.
Physically, this corresponds to the fact that there are no magnetic monopoles.
In a partially non--dimensionalized form, the Landau-Lifshitz energy
\cite{LandauLifshitz-1935,DKMO-2001} then takes the form
\begin{align} %
  \begin{aligned} \label{E-1}
    \EE[\OL m] 
    &= \ d^2 \int_{Q_L\times (0,t)} |\OL \nabla \OL m|^2 \ d \OL x + Q \int_{Q_L\times (0,t)} (m_2^2 + m_3^2) \ d \OL x  + \int_{Q_L\times \R} |\OL h|^2  \ d \OL x,
  \end{aligned}
\end{align}
with the notation $\OL x = (x, x_3)$ and $\OL \nabla = (\nabla, \p_3)$.  Here,
the material parameter $d$ is the so called \textit{exchange length}, modelling
the relative strength of the exchange and magnetostatic or stray field
energy. The dimensionless constant $Q  > 0$ is the \textit{quality factor},
which describes the relative strength of the material anisotropy. The stray
field $\OL h \in L^2(Q_L \times \R; \R^3)$ is given by
$\OL h \ := \ \OL\nabla (-\Delta_{Q_L\times \R})^{-1} \OL\nabla \cdot (\OL m -
\OL M)$, which is the Helmholtz projection of $\OL m-\OL M$ on the gradient
fields.  If the magnetic film is sufficiently thin, it is reasonable to assume
that the magnetization does not vary in the thickness direction within the
film. In this case, the stray field equations can be solved explicitly,
cf. \cite{DKMO-review}. Also assuming that $m$ varies on length scales much
larger than $t$ we can apply a standard low frequency approximation for the
stray field energy (see e.g. \cite{Carbou-2001, KohnSlastikov-2005}). With the
change of variables $\OL x\mapsto \frac{\OL x}{w}$, denoting
$\ell:=\frac{L}{w}$, we arrive at the reduced energy
\begin{align} \label{enmea} %
  \begin{aligned}
    \EE_{\textrm{red}}[\OL m] \ %
    &= \ d^2t \int_{Q_\ell}|\nabla \OL m|^2 \ dx%
    + Q t w^2\int_{Q_\ell} (m_2^2 + m_3^2) \ dx + t w^2 \int_{Q_\ell} m_3^2 \ dx
    \\ %
    &\qquad + \ \frac{t^2w}{2} \int_{Q_\ell} \big|
    |\nabla|^{-\frac 12}\nabla\cdot (m-M)\big|^2 \ dx %
  \end{aligned}
\end{align}
for $m \in \AA$. We introduce the dimensionless parameters $\eps$, $\lam$,
and $\alp$ by
\begin{align}
  \eps \ := \ \frac{d }{w Q^{\frac 12}}, \qquad
  \lam \ := \ \frac{t |\ln \eps|}{ 2\pi d Q^{\frac 12}}, \qquad
  \alp \ := \ \frac wt.
\end{align}
Note that $\eps$ represents the ratio of Bloch wall width and sample width $w$
\cite{DKMO-review}. The parameter $\lam$ is related to the relative strength of
the stray field energy for the charged N\'eel wall to the local energy
(cf. \eqref{EE}). The parameter $\alp$ describes the aspect
  ratio. Rescaling the energy, we arrive at
\begin{align} \label{enmea} %
  \frac{\EE_{\textrm{red}}[\OL m]}{2 d tw
    Q^{\frac 12} \ell} \ %
  = \ E_\eps[m] \ + \Big( \frac 1\eps +  \frac{\pi \lam \alp}{2|\ln \eps|} \Big) \int_{Q_\ell} m_3^2 \
  dx. %
\end{align}
For sufficiently thin films we have
$\alp \gg \frac 1\eps |\ln \eps|$ and the out--of--plane component of the
magnetization is penalized heavily. This suggests to assume $m_3 = 0$, and we
arrive at the form \eqref{EE} for the non--dimensionalized energy.
\begin{remark}[Statement of results in initial variables] %
  The $\Gam$--limit in Theorem \ref{thm-gamma} corresponds to the following
  scaling of the initial energy: In the regime $t \ll d$, $d^2 \ \ll \ wt$ and
  $Q^{\frac 12} \ %
  \approx \ \frac{t}{2\pi \lambda d} |\ln (\frac{d^2}{wt})|$, the ground state
  energy in leading order is given by
\begin{align} \label{scaling-0} %
  \min_{\OL m} \EE [\OL m] \ %
  &\approx \  %
    t^2L \big|\ln\big(\frac{d^2}{wt}\big)\big|
    \begin{cases} %
      \displaystyle \left(1+\lambda\right)  %
      \qquad &\text{ if } \lambda\leq 1, \vspace{0.7ex}\\
      \displaystyle \sqrt{4\lambda} %
      &\text{ if } \lambda > 1,
    \end{cases}
\end{align}
where the minimum is taken over all configurations $\OL m \in H^1(Q_L;\mathbb{S}^2)$
  with $\OL m = \pm e_1$ for $\pm x_1> w$. In order to get a corresponding
$\Gam$--limit for the full energy \eqref{E-1} and to rigorously prove
  \eqref{scaling-0}, it is necessary to show that the assumptions made in this
section only lead to errors which are negligible with respect to the leading
order terms in the energy.
\end{remark}

\section{Proof of Theorem \ref{thm-gamma} -- \ed{liminf--inequality}} \label{sec-lower}

\subsection{Level set estimates} \label{sus-levelset}

In this section, we give some general results for the length of level sets for
the distance function to the boundary of sets $\Ome \SUS \R^2$. These
results are used in the construction of our test function in the proof of Lemma
\ref{lem-separate2}, which then is used in the proof of the
  liminf--inequality. The main result is Theorem \ref{thm-level-local}, which
shows that we can modify a set locally such that the boundary of the new set has
a controlled capacity.  The key in the proofs is an application of the
Gauss-Bonnet theorem. The proofs also rely heavily on the two--dimensionality of
the problem.

\medskip

We first consider the situation of bounded simply connected domains $G$ before
addressing more general sets.  The proof of the next lemma follows from the
ideas in \cite[Lemma 3.2.2-3.2.3]{BZ88}. We note that Lemma 3.2.2 in \cite{BZ88}
is stated for inner level sets, i.e. level sets for $d_{\p E}$ inside $E$.
However, it is not hard to see that it holds for outer level sets as
well. For any set $E \SUS \R^2$, we write $d_E(x) := \dist(x,E)$ for the
  distance to this set, noting that the distance to the empty set is infinite.
\begin{lemma}[Level sets of simply connected domains]\label{lem-level-simple}
  For any bounded simply connected domain $G \SUS \R^2$ with piecewise $C^2$
  boundary, we have
  \begin{align}
    \HH^1(d_{ \p G}^{-1}(t)) \ %
    \leq \ 2\HH^1(\p G) \qquad\quad\quad %
    \text{for a.e. $0 \ \leq t \leq \ \frac 1{2\pi} \HH^1(\p G)$.}
  \end{align}
\end{lemma}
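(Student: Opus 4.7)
The plan is to decompose the level set into its outer and inner parts and to apply Gauss--Bonnet to each, exploiting that in the plane the Gaussian curvature vanishes. Setting $L := \HH^1(\partial G)$, write
\begin{align*}
 d_{\partial G}^{-1}(t) \ = \ \partial G^t \ \sqcup \ \partial G_{-t} \qquad \text{for } t > 0,
\end{align*}
where $G^t := \{x \in \R^2 : d(x,\OL G) \leq t\}$ is the closed outer $t$--neighborhood and $G_{-t} := \{x \in G : d(x,\partial G) \geq t\}$ is the inner $t$--erosion. Since $G$ is simply connected in $\R^2$, the unbounded component of $\R^2\setminus G^t$ is its only component, so $G^t$ is simply connected. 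Similarly, $\R^2 \setminus G_{-t}$ is the closed $t$--neighborhood of the connected set $\R^2 \setminus G$, hence is itself connected; therefore every component of $G_{-t}$ is simply connected, and we write $N_t \in \N_0$ for the number of such components (with $N_t = 0$ when the erosion is empty).

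For a.e. $t \in (0, L/(2\pi))$, Sard's theorem applied to the distance function, together with the structure of distance functions to piecewise $C^2$ sets, gives that $\partial G^t$ and $\partial G_{-t}$ are piecewise $C^{1,1}$ curves consisting of offset arcs parallel to the smooth pieces of $\partial G$ joined by circular arcs of radius $t$ issuing from the corners of $\partial G$. Applying Gauss--Bonnet in the plane to the simply connected domain $G^t$ and to each component of $G_{-t}$, and absorbing the exterior angles at concave corners into the integral of signed geodesic curvature $\kappa$, one obtains
\begin{align*}
 \int_{\partial G^t} \kappa \ d\HH^1 \ = \ 2\pi, \qquad\qquad \int_{\partial G_{-t}} \kappa \ d\HH^1 \ = \ 2\pi N_t,
\end{align*}
with $\kappa$ oriented so that $\kappa > 0$ on the convex part of the respective boundary.

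Using the first--variation formula for arc length under a unit--speed outward parallel flow, $\frac{d}{dt} \HH^1(\partial G^t) = \int_{\partial G^t} \kappa \ d\HH^1 = 2\pi$ and $\frac{d}{dt} \HH^1(\partial G_{-t}) = -\int_{\partial G_{-t}} \kappa \ d\HH^1 = -2\pi N_t$. Integrating from $0$ to $t$ with initial values $\HH^1(\partial G^0) = \HH^1(\partial G_{-0}) = L$ then yields
\begin{align*}
 \HH^1(\partial G^t) \ = \ L + 2\pi t, \qquad \HH^1(\partial G_{-t}) \ \leq \ \max\{L - 2\pi t, 0\},
\end{align*}
where the second bound uses $N_s \geq 1$ whenever $G_{-s} \neq \emptyset$, and that the length drops to zero once the erosion becomes empty. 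Summing these two estimates and using $t \leq L/(2\pi)$ gives the desired bound $\HH^1(d_{\partial G}^{-1}(t)) \leq 2L$.

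The main obstacle lies in handling the singular structure of the level sets carefully enough to justify the Gauss--Bonnet identities and first--variation formulas. The boundaries $\partial G^t$ and $\partial G_{-t}$ inherit circular--arc pieces from the convex corners of $\partial G$ and may carry new corners coming from the concave corners of $\partial G$ (for the outer) or from the focal/cut locus (for the inner), and $G_{-t}$ can undergo topological changes (components meeting, vanishing, or detaching) at a discrete set of times. Sard's theorem together with the piecewise $C^2$ regularity of $\partial G$ guarantees that these degenerate values of $t$ form a null set, so that the computation above holds for a.e. $t$ after the bookkeeping of exterior angles between the curvature integrals and the arc contributions is carried out as in the cited arguments from [BZ88].
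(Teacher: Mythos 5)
Your overall strategy --- split $d_{\partial G}^{-1}(t)$ into the outer offset boundary $\partial G^t$ and the inner erosion boundary $\partial G_{-t}$, control the rate of change of each length via Gauss--Bonnet/total turning, and integrate in $t$ --- is the same mechanism the paper uses (there phrased via the two--sided neighborhood $\mathcal N_t(\partial G)$, its metric completion, and the derivative formula of \cite{BZ88}). However, two of your intermediate claims are false as stated. First, $G^t=\{x: d(x,\overline G)\leq t\}$ need \emph{not} be simply connected when $G$ is: for a horseshoe--shaped $G$ the dilation seals the mouth of the horseshoe and $\R^2\setminus G^t$ acquires a bounded component. Only $\chi(G^t)\leq 1$ is true, and that is exactly what the paper's proof uses. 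Second, the first--variation identity $\frac{d}{dt}\HH^1=\pm\int\kappa$ with exterior angles ``absorbed'' into $\kappa$ is wrong at corners (in particular at corners created by the cut locus): a corner with turning angle $\theta\in[-\pi,0)$ contributes $2\tan(\theta/2)\leq\theta$ to the length derivative, not $\theta$, and in addition the length can jump \emph{downward} at topological transition times. Concretely, for the eroded square one has $\frac{d}{dt}\HH^1(\partial G_{-t})=-8\neq -2\pi N_t$, and for a non--convex polygon (e.g.\ an L--shape) $\HH^1(\partial G^t)=L+2\pi t+(\tfrac\pi2-2)t<L+2\pi t$, so both of your displayed equalities fail; your ``$=L+2\pi t$'' also rests on the false simple--connectivity claim.

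Fortunately all of these corrections have the favorable sign, so the inequalities you actually need, $\HH^1(\partial G^t)\leq L+2\pi t$ and $\HH^1(\partial G_{-t})\leq\max\{L-2\pi t,0\}$, are true and do yield the lemma. But proving them is precisely the technical heart of the statement: one needs that, for a.e.\ $t$, the length of these singular parallel/eroded boundaries is differentiable with derivative $\tau_t-\sum_{x^*}(\tau_{x^*}-2\tan\frac{\tau_{x^*}}{2})$ and has only downward jumps, which the paper obtains by first reducing to polygons and then invoking \cite[Lemmas 3.2.2--3.2.3]{BZ88}. Your closing appeal to Sard's theorem does not supply this: Sard only controls the exceptional set of level values, not the rate at which length changes across cut--locus corners and topological transitions, nor the sign of the jumps. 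If you replace your two equalities by the inequalities above, drop the simple--connectivity claim in favour of $\chi(G^t)\leq 1$ (and $\chi\leq 0$ respectively for the combined neighborhood), and justify the derivative/jump structure via polygonal approximation and the \cite{BZ88} formula, your argument becomes a correct variant of the paper's proof.
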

\begin{proof}
  By an approximation argument, we may assume that $G$ is
  polyhedral. \ed{Indeed, by linear interpolation one can find a sequence of
    simply connected polygons $G_n\rightarrow G$ such that
   $|G_n|\rightarrow |G|$, $\HH^1(\p G_n)\rightarrow \HH^1(\p G)$ and
    moreover $d_{\p G_n}(x)\rightarrow d_{\p G}(x)$ uniformly. The latter
    implies that $\{d_{\p G_n}(x)>t\}\rightarrow \{d_{\p G}(x)>t\}$ and
    $\HH^1( d_{\p G}^{-1}(t))\leq \liminf_{n\rightarrow \infty} \HH^1( d_{\p
      G_n}^{-1}(t))$ for a.e. $t$. Thus the desired inequality for
    $G$ follows after passing to the limit of the inequality for $G_n$. }

  \medskip
  
  By the isoperimetric inequality, we have
  $t_* :=\max_{x\in G} d_{\p G}(x) \leq t_0:=\frac1{2\pi}\HH^1(\p G)$. For
  $t>0$, let $\mathcal{N}_t:=\mathcal{N}_t(\p G)=\{x\in \R^2: d_{\p G}(x) <t\}$
  denote the $t$-neighborhood of $\p G$.  \hks{Note that $\mathcal{N}_t$ is
    connected \hks{for all $t > 0$} and has at least one hole for
    $t \in (0,t_*]$.} In terms of the Euler characteristics
  $\chi(\mathcal{N}_t)$ of $\mathcal{N}_t$, we can express this as
  \begin{align} \label{euler-char} %
    \chi(\mathcal{N}_t) \ %
    \ed{\leq} \ %
    \begin{TC}
      0 \qquad\qquad &\text{for $t \in (0,t_*]$,}  \\
      1 &\text{for $t > t_*$}.
    \end{TC}
  \end{align}
  Let $\ell_t:= d_{\p G}^{-1}(t) \SUS \R^2$ be the $t$-level set of $d_{\p
    G}$. Let $\widetilde{\mathcal{N}_t}$ \hks{be the completion of
    $\mathcal{N}_t$ with respect to the intrinsic metric $\rho_{\mathcal{N}_t}$,
    where $\rho_{\mathcal{N}_t}(x,y)$ is defined as the infimum of the Euclidean
    length of curves joining $x$ and $y$ in $\mathcal{N}_t$}, and let
  $\widetilde{\ell_t}:= \widetilde{\mathcal{N}_t} \BS \mathcal{N}_t$. \hks{We
    note that $\widetilde{\mathcal{N}_t}$ admits a surjective $1$-Lipschitz map
    to the closure $\OL{\mathcal{N}}_t$ (w.r.t the standard metric) of
    $\mathcal{N}_t$, which induces a surjective 1-Lipschitz map
    $\widetilde{\ell_t}\rightarrow \ell_t\subset \mathbb{R}^2$; this implies
    $\mathcal{H}^1(\widetilde{\ell_t})\geq \mathcal{H}^1(\ell_t)$. The
    inequality can be strict since tangentially aligned boundaries of different
    components are counted twice for $\t \ell_t$ (see Fig \ref{fig-levelset})}.
  By construction, $\t \ell_t$ consists of a collection of oriented, closed,
  piecewise $C^2$ curves with a finite number of vertex points
  $\VV_t \SUS \widetilde {\ell_t}$.  We choose the orientation of each curve such that
  $\mathcal{N}_t$ lies to its left and write $\tau_t$ for the total rotation of
  $\widetilde{\ell_t}$. We also denote the rotation at the vortex points
  $x^* \in \VV_t$ by $\tau_{x^*} \in \hks{[}-\pi,0)$, noting that the case
  $\tau_{x^*} > 0$ does not occur since such singularities are smoothed out (see
  Fig. \ref{fig-levelset}).

  \medskip
  
  \begin{figure}
    \centering %
    \includegraphics[width=10cm]{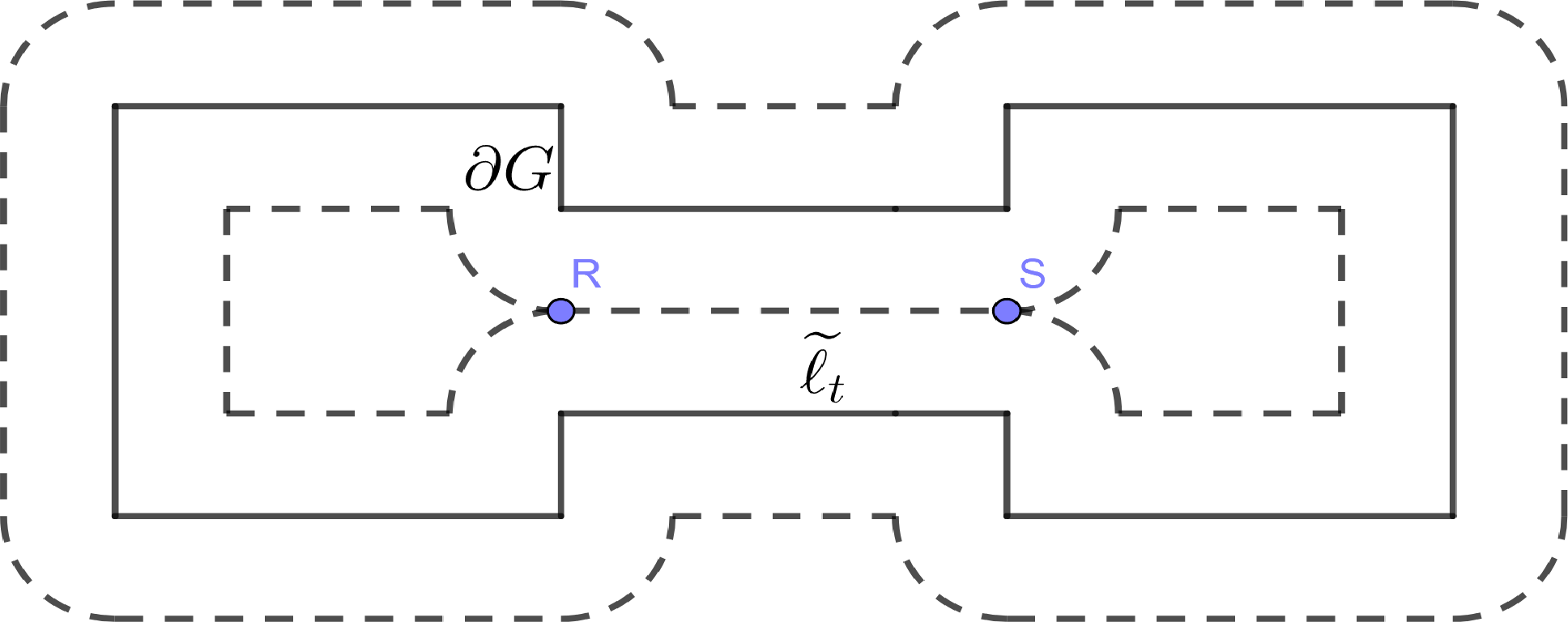}
    \caption{Level set associated to the boundary of a polygonal
      domain: The level set $\ell_t = d_{\p G}^{-1}(t)$ is given by the
        dashed lines. For the curve $\t \ell_t$ coming from the metric
      completion, the line between $R$ and $S$ is counted twice.}
    \label{fig-levelset}
  \end{figure}
  
  We define $g(t):=\HH^1(\widetilde{\ell_{t}})$ for $t > 0$ and
  $g(0) := \lim_{t \to 0} g(t) = 2\HH^1(\ell_0)$. It is then enough to show that
  \begin{align} \label{des-est} %
      g(t) \ \leq \ g(0) \qquad\qquad \text{ for $t\in \big[0,t_0\big]$}. 
  \end{align}
  \textit{Proof of \eqref{des-est}:} We have $g \in C^1(\R \BS \SS)$ for some
  finite set $\SS \subset [0,\infty)$ with
  \begin{align} \label{eq-40book} 
    g'(t) \ = \ %
    \tau_{t} \  - \ \sum_{x^* \in \VV_t} \Big(\tau_{x^*} - 2\tan\frac{\tau_{x^*}}{2} \Big) \qquad %
    \text{for $t\in [0,\infty) \BS \SS$, }
  \end{align}
  and $g(t+0)\leq g(t-0)$ for any $t\in
    \SS$ \cite[Lemma 3.2.3]{BZ88} (note that only the
    "inner" level set $G \cap (\widetilde{\mathcal{N}_t} \BS
    \mathcal{N}_t)$ and with reverse orientation of the curves is considered in
    \cite{BZ88}).
    By the Gauss-Bonnet theorem, we have $\tau_t \ = \ 2\pi
    \chi(\mathcal{N}_t)$ for $t > 0$, which implies $\tau_t \ \leq \ 0$ for $t
    \in (0, t_*]$ by \eqref{euler-char}.  We also note that $\phi -
    2\tan\frac{\phi}{2} \geq 0$ for $\phi \in
    [-\pi,0]$. Hence, by integrating \eqref{eq-40book} over $(0,t)$, i.e.
  \begin{align}\label{eq-inner}
    g(t) \ \leq \ g(0)+\int_0^{t} \tau_s  \ ds \ %
    \leq \ g(0) \qquad %
    \text{ for } t\in [0,t_*],
  \end{align}
  it follows that  \eqref{des-est} holds for $t\in [0,t_*]$.

  \medskip
  
  It remains to show \eqref{des-est} for $t \in (t_*,t_0]$: Without loss of
  generality, we can assume that $t_*<t_0$. In this case, $g$ may have a
  downward--jump at $t_*$, i.e. $g(t_*+0)<g(t_*-0)$. Furthermore, for $t > t_*$ we have
    $\mathcal{N}_t$ $=$ $\mathcal{N}_t(\p G)$ $=$ $\mathcal{N}_t(G)$, where
    $\mathcal{N}_t(G)$ is the $t$-neighborhood of $G$. We define
    $\widetilde{h_t} :=\widetilde{\mathcal{N}_t(G)}\BS \mathcal{N}_t(G)$ for
    $t\geq 0$, where $\widetilde{\mathcal{N}_t(G)}$ is the completion of
    $\mathcal{N}_t(G)$ with respect to the intrinsic metric
    $\rho_{\mathcal{N}_t(G)}$.  For $t\geq 0$, we define $f(t):=\HH^1(\widetilde{h_{t}})$ and
  $f(0) := \lim_{t \to 0} f(t) = \HH^1(\p G)$. With the same arguments as before
  we then get
  \begin{align}\label{eq-outer}
    f(t) \ \leq \ f(0)+ 2\pi \int_0^{t} \chi(\mathcal{N}_t(G)) \ ds \ %
    \leq \ \HH^1(\p G) + 2\pi t \qquad
    \text{ for } t \geq 0,
  \end{align}
  where we have used that $\chi(\mathcal{N}_t(G)) \ed{\leq} 1$ for all $t \geq 0$.  By
  construction, for $t >t_*$, the inner level sets are empty and we thus have
  $g(t) = f(t)$.  In particular, for $2\pi t \leq \HH^1(\p G)$ we have
      $g(t) \leq 2\HH^1(\p G)$.
\end{proof}
\ed{For not simply connected sets, the level set estimate in Lemma
  \ref{lem-level-simple} does not hold in general: a simple counterexample is
  given by the annulus $B_1\BS \overline{B_\delta}$ for
  $\delta\in (0,\frac{1}{4})$. 

\medskip
  
Generally, there is a decomposition $\Ome = \Ome^{(0)} \Delta \Ome^{(1)}$ such
that $\Ome^{(0)}$ satisfies a level set estimate and such that the connected
components of $\Ome^{(1)}$ have controlled size:}
\begin{lemma}[Global level set estimate]\label{lem-level} %
  \hks{Let $\Ome$ be a bounded open subset in $\R^2$ with piecewise $C^2$ boundary } and let
  $0 < \d_0 < \frac 1{2\pi}\mathcal{H}^1(\p \Ome)$. Then there exist two open
  subsets $\Ome^{(0)}$, $\Ome^{(1)}$ of the convex hull of $\Ome$ with
  $\Ome = \Ome^{(0)} \Delta \Ome^{(1)}$ and
  $\p \Ome = \p \Ome^{(0)}\cup \p \Ome^{(1)}$
  such that the following holds:
  \begin{enumerate}
  \item\label{ass-E0} %
    The set $\Ome_0$ satisfies the level set estimate
    \begin{align} \label{est-E0} %
      \HH^1(d_{\p \Ome^{(0)}}^{-1}(t)) \ %
     \leq \ 2\HH^1(\p \Ome^{(0)}) \qquad\qquad %
     \text{for a.e. $t\in \big[0,\delta_0]$.} %
    \end{align}
  \item For any connected component $G$ of $\Ome^{(1)}$, we have
    \begin{align} \label{ass-E11} 
      \HH^1(\p G) \ \leq \ 2\pi \min \{ \delta_0, \dist(\p G, \p \Ome^{(0)}) \}.
    \end{align}
  \end{enumerate}
\end{lemma}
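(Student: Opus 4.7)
The first step is to reduce to the polyhedral case by the same linear interpolation argument used at the start of the proof of Lemma~\ref{lem-level-simple}: approximate $\Omega$ by polygons $\Omega_n$ with $|\Omega_n|\to|\Omega|$, $\HH^1(\partial\Omega_n)\to \HH^1(\partial\Omega)$ and $d_{\partial\Omega_n}\to d_{\partial\Omega}$ uniformly, and pass claims (i)--(ii) to the limit. Henceforth assume $\partial\Omega$ is a finite disjoint union of piecewise linear Jordan curves.

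The main construction proceeds by iterative extraction. Initialize $(\Omega^{(0)}_0,\Omega^{(1)}_0):=(\Omega,\emptyset)$. At step $n$, search for a connected component $C$ of $\R^2\setminus\partial\Omega^{(0)}_n$ satisfying
\begin{align*}
  \HH^1(\partial C)\;\leq\;2\pi\delta_0
  \qquad\text{and}\qquad
  \dist\bigl(\partial C,\,\partial\Omega^{(0)}_n\setminus\partial C\bigr)
  \;\geq\;\tfrac{\HH^1(\partial C)}{2\pi}.
\end{align*}
If such $C$ exists, update by symmetric difference, $\Omega^{(0)}_{n+1}:=\Omega^{(0)}_n\,\Delta\,C$ and $\Omega^{(1)}_{n+1}:=\Omega^{(1)}_n\,\Delta\,C$, which removes all Jordan components of $\partial C$ from $\partial\Omega^{(0)}_n$ and transfers them to $\partial\Omega^{(1)}_n$. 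Otherwise terminate. Each step strictly reduces the number of Jordan components of $\partial\Omega^{(0)}_n$, so the procedure ends after finitely many steps, yielding $(\Omega^{(0)},\Omega^{(1)})$ with $\Omega=\Omega^{(0)}\,\Delta\,\Omega^{(1)}$ and $\partial\Omega=\partial\Omega^{(0)}\cup\partial\Omega^{(1)}$ by construction.

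For (ii), the perimeter bound $\HH^1(\partial C)\leq 2\pi\delta_0$ is fixed at the moment of extraction, while $\dist(\partial C,\partial\Omega^{(0)})\geq \dist(\partial C,\partial\Omega^{(0)}_n\setminus\partial C)$ since distances only grow as more boundary pieces are subsequently removed; these bounds transfer to each connected component $G$ of the final $\Omega^{(1)}$. For (i), the crucial claim is that upon termination every Jordan component $\gamma$ of $\partial\Omega^{(0)}$ satisfies $\HH^1(\gamma)>2\pi\delta_0$. Given this, Lemma~\ref{lem-level-simple} applied to the bounded simply connected region enclosed by each $\gamma$ yields $\HH^1(d_\gamma^{-1}(t))\leq 2\HH^1(\gamma)$ for $t\leq \HH^1(\gamma)/(2\pi)$, and in particular for $t\in[0,\delta_0]$. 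Summing over $\gamma$ via the elementary inclusion $d_{\partial\Omega^{(0)}}^{-1}(t)\subseteq\bigcup_\gamma d_\gamma^{-1}(t)$ gives \eqref{est-E0}.

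The main obstacle is establishing the crucial claim, i.e.\ ruling out that the iteration terminates prematurely with a small Jordan component still present because it was blocked by proximity to a neighboring component. I would address this by processing extractions from innermost Jordan components outward, and by allowing a cluster of several mutually close small components to be extracted as a single unit; an isoperimetric estimate exploiting the two--dimensionality of the problem (in the spirit of Lemma~\ref{lem-level-simple}) should keep the joint perimeter of such a cluster bounded by $2\pi\delta_0$, so the unit remains admissible. The finite combinatorial nature of a polyhedral boundary then guarantees that this bookkeeping terminates and leaves only large Jordan components in $\partial\Omega^{(0)}$.
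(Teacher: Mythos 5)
There is a genuine gap, and it is precisely at the point you flag as the ``main obstacle'': your argument for (i) relies on the claim that, after extraction, every Jordan component of $\p \Ome^{(0)}$ has length larger than $2\pi\delta_0$, so that Lemma \ref{lem-level-simple} applies loop by loop up to scale $t=\delta_0$. This claim cannot be arranged consistently with condition \eqref{ass-E11}. Consider a long loop $\CC_1$ (which must stay in $\p\Ome^{(0)}$) and a short loop $\CC_2$ with $\HH^1(\CC_2)\ll 2\pi\delta_0$ lying at distance $\eta \ll \frac{1}{2\pi}\HH^1(\CC_2)$ from $\CC_1$ (e.g.\ a disk with a tiny hole very close to the outer circle). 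Moving $\CC_2$ into $\Ome^{(1)}$ violates the constraint $\HH^1(\p G)\leq 2\pi\dist(\p G,\p\Ome^{(0)})$ in \eqref{ass-E11}, no matter how you cluster it with other small loops or in which order you process components: the blocking neighbour is the long loop itself, which is forced to remain in $\p\Ome^{(0)}$. So $\CC_2$ must be retained, $\p\Ome^{(0)}$ contains a short loop, and the per-loop estimate from Lemma \ref{lem-level-simple} for that loop only holds for $t\lesssim \frac{1}{2\pi}\HH^1(\CC_2)$, not up to $\delta_0$ (for a circle of radius $\eps$ the $t$-level set has length $2\pi(\eps+t)\gg 2\HH^1(\CC_2)$ once $t\gg\eps$). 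Your proposed fix (innermost-outward processing, clustering mutually close small components, isoperimetric control of the cluster perimeter) does not touch this configuration, so the ``crucial claim'' fails and with it the reduction of (i) to Lemma \ref{lem-level-simple}.

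The paper's construction accepts exactly these short-but-close loops into $\Ome^{(0)}$: the selection rule \eqref{eq-I00} keeps a loop either because it is long or because it is within $\frac{1}{2\pi}$ of its own length from an already selected loop (iterated). The price is that (i) can no longer be proved loop by loop; instead one studies the $t$-neighbourhood of the whole retained boundary and tracks how its connected components merge as $t$ grows. The Gauss--Bonnet argument (as in \eqref{eq-outer}) shows the boundary length of each connected component grows at rate at most $2\pi$, and the selection rule translates into the bound \eqref{rel} on the merging times, $2\pi s_i\leq c_{N-i+1}$, so the total accumulated growth $2\pi\sum_i s_i$ is dominated by $\sum_i c_i$, which gives \eqref{claim-fi} and hence \eqref{est-E0}. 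This interaction/merging analysis — quantifying that any retained short loop merges with a longer retained loop before its own level sets can become too long — is the key idea missing from your proposal; without it the summation over individual loops cannot reach the scale $\delta_0$ required in \eqref{est-E0}.
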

\begin{proof}
  \textit{Construction:} Since $\Ome$ is bounded with piecewise $C^2$
    boundary, its boundary consists of finitely many simple closed curves
    $\CC_k \SUS \p \Ome$, $k \in \II$. Furthermore, by using polygon approximation
    (similar as in Lemma \ref{lem-level-simple}) one may assume that each
    $\CC_k$ is piecewise linear and the distance
    $d_{ij} := \dist(\CC_i,\CC_j)$ between $\CC_i$ and $\CC_j$ are positive
    and pairwise different. By the Jordan-Schoenflies theorem, there is a unique
    decomposition
  \begin{align} \label{dieform} %
    \Ome \ = \ \hks{\bigcup_{k \in \II_+} \bigg(G^{(k)}\BS \bigcup_{\substack{j \in \II_-, G^{(j)} \subset G^{(k)}}} \overline{G^{(j)}}\bigg)}
  \end{align}
  for some index set $\II_+$ and $\II_-$, where $G^{(k)}$, $k\in \II$, is the
  bounded simply connected domain with $\CC_k=\p G^{(k)}$. Roughly speaking,
  $\Ome$ is a union of finitely many connected components, where each component
  is a simply connected domain minus the closure of finitely many simply
  connected domains.  We set
$\II_{00}:=\{j\in \II: \HH^1(\CC_j)\geq 2\pi\delta_0\}$ and iteratively define
  \begin{align}\label{eq-I00}
    \II_{0k} \ := \ \Big\{\ell\in \II \ : \ \HH^1(\CC_\ell) \ \geq \  2\pi \dist \big(\CC_\ell, \bigcup\nolimits_{j\in \cup_{i=0}^{k-1}\II_{0i}} \CC_j \big)\  \Big\} \qquad \text{for $k \geq 1$
    .}
  \end{align}
  With $\II_0:=\bigcup_{i=0}^{\infty}\II_{0i}$, $\II_1:=\II\BS \II_0$ and
  $\II_j^\pm := \II_j \cap \II_\pm$ for $j=0,1$, we then define
  \begin{align} \Ome^{(0)} \ := \ \bigcup_{k \in \II_0^+}
      \bigg(G^{(k)} \BS \bigcup_{\substack{j \in \II_0^-, G^{(j)}\subset
          G^{(k)}}} \overline{G^{(j)}} \bigg)\quad \text{and} \quad %
                      \Ome^{(1)} := \Ome\Delta \Ome^{(0)}.
  \end{align}
Note that if $k\in \II_1^+$, then by our selection procedure $\{j\in \II: G^{(j)}\subset G^{(k)}\}\subset \II_1$. In other words, if an outer loop $\CC_k$ is not selected for $\Ome^{(0)}$, then the whole component $G^{(k)}$ is not contained in $\Ome^{(0)}$. Similarly, if $k
\in \II_1^-$, then $\{j\in \II: G^{(j)}\subset G^{(k)}\}\subset \II_1$. Thus $\Ome^{(1)}$ is the union of $G^{(k)}$, where $k\in \II_1^+\cup \II_1^-$.

\medskip \textit{Conclusion of proof}: By construction, the open sets
$\Ome^{(0)}$, $\Ome^{(1)}$ are subsets of the convex hull of $\Ome$ and satisfy
$\Ome = \Ome^{(0)} \Delta \Ome^{(1)}$ and
$\p \Ome = \p \Ome^{(0)}\cup \p \Ome^{(1)}$. Moreover, \eqref{ass-E11} holds
since by construction for any $\ell \in \II_1$ we have
$\HH^1(\CC_\ell) \leq 2\pi \d_0$ and
$\HH^1(\CC_\ell) \leq 2\pi \dist (\CC_\ell, \bigcup_{j \in \II_0}
\CC_j)$. Hence, it remains to prove \ref{ass-E0}.  In the sequel we write
$\mathcal{N}_t(E) := \{ x \in \R^2 : \dist(x,E) < t\}$ for the open
$t$--neighborhood of $E$ for any set $E \SUS \R^2$ and any $t > 0$.

  \medskip
 \begin{figure}
     \centering %
     \includegraphics[width=6cm]{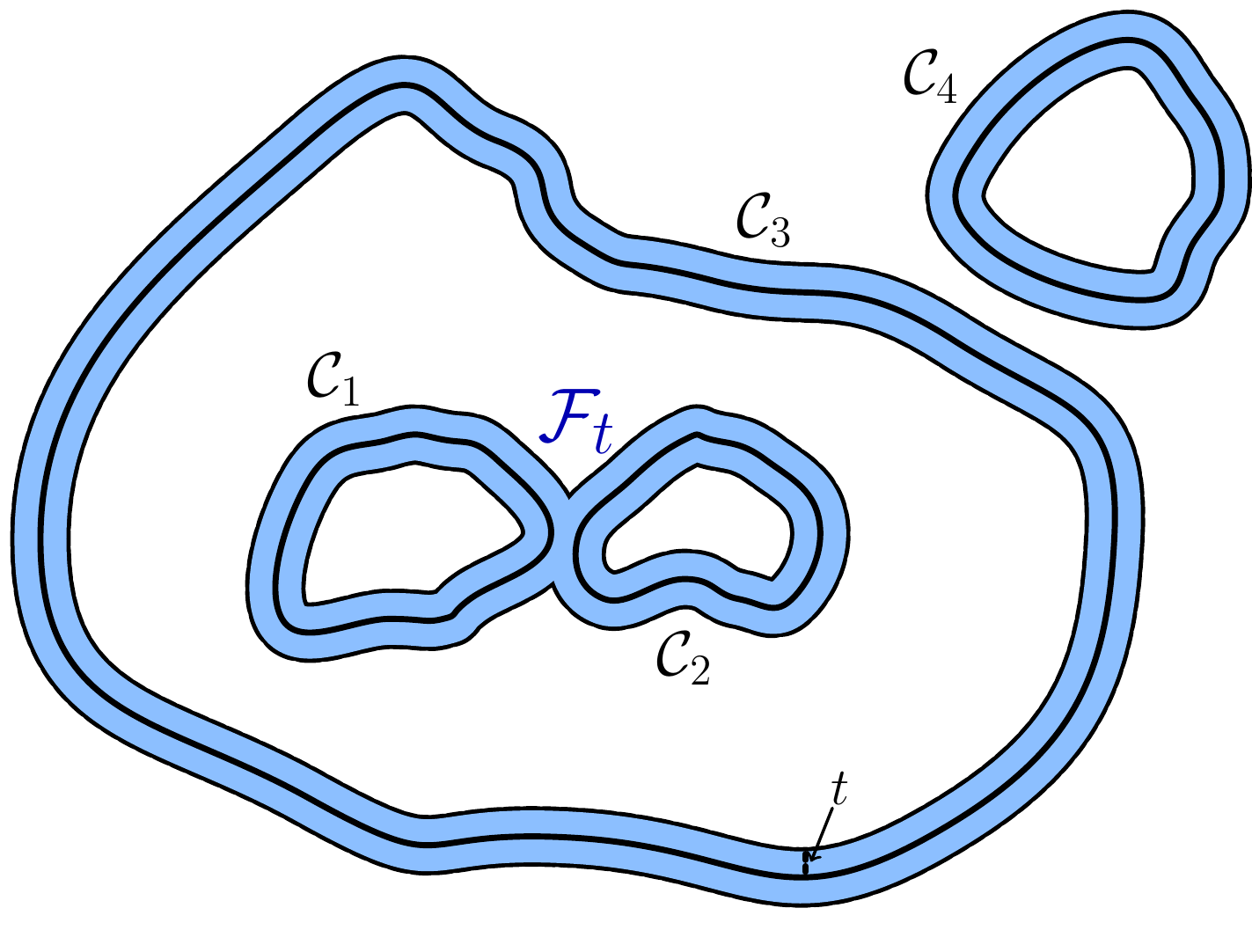}
     \caption{Sketch of a set $\Ome^{(0)}$ with boundary
         $\p \Ome^{(0)} = \CC_1 \cup$ $\CC_2 \cup \CC_3$ $\cup \CC_4$ and
         its $t$--neighborhood $\mathcal{N}_t(\p \Ome^{(0)})$. The
         connected component $\FF_t$ of $\mathcal{N}_t(\p \Ome^{(0)})$ is
         generated by the loops $\CC_1$ and $\CC_2$.}
     \label{fig-levelset}
   \end{figure}

   The number of connected components $N_t$ of the $t$-neighborhood of
   $\mathcal{N}_t(\p \Ome^{(0)})$ of $\p \Ome^{(0)}$ is nonincreasing
     in $t$ and piecewise constant, except at a finite number of
   \textit{merging times}, when  components merge. We consider any
   interval $I = (t_0,t_1]$ such that $N_t$ is constant in $I$ and
   $t_0 < t_1$ are either merging times or $t_0,t_1 \in \{ 0, \d_0 \}$. Then it
   is enough to show \eqref{est-E0} for every single connected component of
   $\mathcal {N}_t(\p \Ome^{(0)})$, $t\in I$. Within this time interval, we
     consider any connected component $\FF_{t_1} \SUS \mathcal{N}_{t_1}(\p \Ome^{(0)})$.  Upon relabelling we can
   assume that the loops contained in $\FF_{t_1}$ are given by
   $\CC_i = \p G^{(i)}$, $1\leq i\leq N$ for some $N\in \N$. We write
   \begin{align}
     \FF_t \ := \ \mathcal{N}_t \big(\bigcup_{i=1}^N \CC_i \big) \ %
     \subset \ \mathcal{N}_t(\p\Ome^{(0)}) \qquad \text{for $t > 0$}
   \end{align}
   for the $t$-neighborhood associated to the loops $\{\CC_i\}_{i=1}^N$.
   Therefore, it suffices to show that
    \begin{align} \label{claim-fi} %
      \HH^1(\ed{\p\FF_t}) \ \leq \ 2 \sum_{i=1}^N c_i, \qquad %
          \text{for $t \in I = (t_0, t_1]$ where  $c_i := \HH^1(\CC_i)$.}
    \end{align}
    By construction $\FF_t$ is connected for all $t\in I$ (as $\FF_{t_1}$ is connected and there is no merging time in $I$) and thus
      the Euler characteristics satisfies $\chi(\FF_t) \leq 1$ for $t \in I$.

  \medskip

  \textit{Proof of \eqref{claim-fi}:} By \cite[Lemma 3.2.3]{BZ88},
  $\HH^1(\ed{\p\FF_t})$ can only have a downward jump
  discontinuity. Hence, by finite induction we can assume that \eqref{claim-fi}
  holds at time $t_0+0$, i.e
  $\lim_{t \searrow t_0} \HH^1(\ed{\p\FF_t}) \ \leq \ 2 \sum_{i=1}^{N} c_i$. %
  We choose $t_* \in [t_0,t_1]$ maximal such that $\chi(\FF_t) \leq 0$ in
  $(t_0,t_*]$. If $t_*$ does not exist, then we set $t_*=t_0$. \ed{If $t_* > t_0$
  then the same argument as for \eqref{eq-inner} shows that $\HH^1(\p \FF_t)$ is
  nonincreasing for $t \in (t_0,t_*]$.}  Together with the induction hypothesis
  this shows that \eqref{claim-fi} holds for $t \in (t_0,t_*]$.

  \medskip

  It remains to show \eqref{claim-fi} for $t \in (t_*,t_1]$: Without loss of
  generality, we can assume that $t_*<t_1$. By definition of $t_*$ we have
  $\lim_{t \searrow t_*} \chi(\FF_t) = 1$ (note that $\chi(\FF_{t}) \leq 1$
  for $t \in I$ since $\FF_t$ is connected in $I$). Since $\FF_t$ is
  connected, this implies that $\FF_t$ is simply connected for $t = t_*+0$. It
  follows that $G^{(i)} \SUS \FF_t$ for $t > t_*$ and for all $1 \leq i \leq
  N$. In particular, with the notation
  \begin{align}
    \GG_t \ := \ \mathcal{N}_t\big(\bigcup_{i=1}^N G^{(i)}\big) \qquad
    \text{for $t > 0$}
  \end{align}
  we have $\FF_t = \GG_t$ for $t > t_*$. It then is enough to show
  \begin{align} \label{claim-fi-2} %
    \HH^1(\ed{\p\GG_t}) \ \leq \ 2 \sum_{i=1}^N c_i \qquad %
    \FA{t \in (t_*,t_1].}
    \end{align}
    In fact, we will show that \eqref{claim-fi-2} holds for all $t \in [0,t_1)$.

    \medskip

    \ed{We recall that by construction $\FF_0$ consists of $N$ connected
      components and $\FF_t$ is connected for all $t \in I$. Furthermore,
      since the set of merging times of the connected components of $\FF_t$ for
      $t\in (0,t_1]$ is a subset of $\{\frac 12 d_{ij}\}$ with $d_{ij}$ pairwise
      different by our assumption at the beginning of the proof, hence
      there are $N-1$ merging times $0 < s_1 < s_2 < \ldots < s_{N-1} \leq t_1$
      for connected components in $\FF_t$. Moreover, the number of connected
      components of $\FF_t$ decreases by precisely $1$ at each merging
      time. Thus in the time interval $(s_{i-1},s_i]$ we have $N_i := N-i+1$
      connected components for $1 \leq i \leq N$, where we have set $s_0 := 0$
      and $s_N :=t_1$.}

  \medskip
  
  Since the length of the outer boundary $\HH^1(\p\GG_t)$ grows at a rate of at
  most $2\pi$ for each single component of $\GG_t$ (cf. \eqref{eq-outer}), and
  moreover, the number of components of $\GG_t$ is no larger than the number of
  components of $\FF_t$ for each $t\in (0,t_1]$, integrating over $(0,t)$ yields
  \begin{align} \label{relo} %
    \HH^1(\p \GG_t) \ %
    \leq \ \sum_{i=1}^N c_i + 2\pi \sum_{i=1}^{N} N_{i}(s_{i}-s_{i-1}) \ %
    = \ \sum_{i=1}^N c_i + 2\pi \sum_{i=1}^{N} s_i.
  \end{align}
  We want to show that $2\pi\sum_{i=1}^{N}s_i\leq \sum_{i=1}^N c_i$. Without
  loss of generality, we assume $c_1 \geq \ldots \geq c_N$. Let
  $\{\FF_{s_i}^{(j)}, 1 \leq j \leq N_i\}$ be the set of connected components
  included in $\FF_{t}$ for $t \in (s_{i-1},s_i]$, and let $\JJ_{s_i}^{(j)}$ be
  the corresponding index set of the loops $ \CC_\ell$,
  $\ell \in \JJ_{s_i}^{(j)}$, included in $\FF_{s_i}^{(j)}$, i.e.
  $\FF_{s_i}^{(j)} = \mathcal{N}_t(\bigcup_{\ell \in \JJ_{s_i}^{(j)}} \CC_\ell)$
  for $t \in (s_{i-1},s_i]$. From our construction \eqref{eq-I00} and since
    $s_i \leq \d_0$, we have
  \begin{align} \label{rel} 2\pi s_i \ %
    \lupref{eq-I00}\leq \ \min_{1 \leq j \leq N_i} \ \max_{k \in
    \JJ_{s_i}^{(j)}} \ c_k \ %
    \ \leq \ c_{1 + (N_i -1)} \ = \ c_{N-i+1}.
\end{align}
Inserting \eqref{rel} into \eqref{relo}, we obtain \eqref{claim-fi-2} for
$t \in (0,t_1]$.
\end{proof}
For a sufficiently regular set $\Ome \SUS \R^2$ and for some
$\widehat x \in \R^2$ and $\rho > 0$, we next derive a local level set estimate
for $\p\Ome \cap B_\rho(\widehat x)$.
\begin{theorem}[Local level set estimate] \label{thm-level-local} %
  Let $\Ome \CUS \R^2$ be bounded and open with $\p \Ome \in C^2$. For
    $\hat x \in \R^2$, $\rho > 0$ let $B_\rho := B_\rho(\hat x)$ and suppose
    that $\Ome\cap B_\rho \neq \emptyset$. Let
  $0 < 2\pi \delta_0 \leq \min \{ \frac \rho{8}, \HH^1(\p \Ome\cap B_\rho)
  \}$. Then there exist two subsets
  $\Ome_\rho^{(0)}, \Ome_\rho^{(1)} \SUS B_\rho$ with %
  \begin{align} \label{levloc-symdiff} %
    \Ome \cap B_{\rho-2\delta_0} \ = \ (\Ome_\rho^{(0)} \Delta \Ome_\rho^{(1)}) \cap B_{\rho-2\delta_0}
\end{align}
such that with $\gamma_\rho:=\p\Ome^{(0)}_\rho\cap B_\rho$ the following
holds:
\begin{enumerate}
\item The length of the boundaries is estimated by
\ed{\begin{align}\label{levl-locl-24}
  \max \{ \mathcal{H}^1(\gamma_\rho),  \HH^1(\p \Ome^{(1)}_\rho\cap B_{\rho}) \} \ %
  \leq \ \HH^1(\p\Ome\cap B_\rho).
\end{align}}
\item We have the level set estimate
\begin{align}
        \HH^1(d_{\gamma_\rho}^{-1}(t) \cap B_{\rho-4\delta_0}) \ %
    &\leq \ 2\HH^1(\gamma_\rho) \quad\qquad %
      \text{for a.e. $t\in (0, \d_0)$.} \label{levl-locl-3}
\end{align}
\item Any connected component $G$ of $\Ome_\rho^{(1)}\cap B_{\rho-2\delta_0}$
  satisfies
\begin{align}\label{levl-locl-5}
      \HH^1(\p G) \ \leq \ 2\pi \min \{ \d_0, \dist(\p G, \gamma_\rho) \}.    
\end{align}
\end{enumerate}
\end{theorem}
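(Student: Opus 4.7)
The plan is to reduce Theorem~\ref{thm-level-local} to the global statement of Lemma~\ref{lem-level} applied to the truncated set $\tilde\Ome := \Ome \cap B_\rho$. First, a standard approximation step (polygon approximation as in the proof of Lemma~\ref{lem-level-simple}, together with Sard's theorem applied to the radial distance $x\mapsto |x-\hat x|$) allows one to reduce to the case in which $\p\Ome$ meets $\p B_\rho$ transversally, so that $\tilde\Ome$ is a bounded open set with piecewise $C^2$ boundary satisfying $\p\tilde\Ome \subset \p\Ome \cup \p B_\rho$. The hypothesis $\HH^1(\p\Ome \cap B_\rho) \geq 2\pi\delta_0$ forces $\HH^1(\p\tilde\Ome) \geq 2\pi\delta_0$, so Lemma~\ref{lem-level} applied to $\tilde\Ome$ with parameter $\delta_0$ produces a decomposition $\tilde\Ome = \tilde\Ome^{(0)} \Delta \tilde\Ome^{(1)}$ with the global properties. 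I would set $\Ome^{(0)}_\rho := \tilde\Ome^{(0)}$ and $\Ome^{(1)}_\rho := \tilde\Ome^{(1)}$, subject to a small modification near $\p B_\rho$ described below. The identity \eqref{levloc-symdiff} then follows immediately by intersecting $\Ome \cap B_\rho = \tilde\Ome^{(0)} \Delta \tilde\Ome^{(1)}$ with $B_{\rho - 2\delta_0}$.

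For the length bound \eqref{levl-locl-24}, observe that the pieces of $\p\tilde\Ome$ lying in the \textit{open} ball $B_\rho$ all come from $\p\Ome$, while the remaining pieces lie on $\p B_\rho$; hence $\gamma_\rho = \p\tilde\Ome^{(0)} \cap B_\rho \subset \p\Ome \cap B_\rho$, and likewise for $\p\Ome^{(1)}_\rho \cap B_\rho$.

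The level set estimate \eqref{levl-locl-3} is the main technical step, and I expect it to be the chief obstacle. The preliminary observation is the distance equality $d_{\gamma_\rho}(x) = d_{\p\tilde\Ome^{(0)}}(x)$ for $x \in B_{\rho - 4\delta_0}$ with $d_{\gamma_\rho}(x) < \delta_0$: indeed, any point of $\p\tilde\Ome^{(0)} \setminus B_\rho$ lies on $\p B_\rho$ and thus has distance greater than $4\delta_0$ from $x$, so the nearest point of $\p\tilde\Ome^{(0)}$ to $x$ must lie in $\gamma_\rho$. A naive application of Lemma~\ref{lem-level} would only yield the bound $2\HH^1(\p\tilde\Ome^{(0)})$, which can be much larger than the desired $2\HH^1(\gamma_\rho)$ because $\p\tilde\Ome^{(0)}$ may carry long arcs of $\p B_\rho$. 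To recover the sharp constant, the Gauss--Bonnet/Euler-characteristic argument behind the proofs of Lemmas~\ref{lem-level-simple}--\ref{lem-level} must be re-run locally: I consider the connected components $\FF_t$ of $\mathcal{N}_t(\p\tilde\Ome^{(0)})\cap B_{\rho - 3\delta_0}$, note that loops of $\p\tilde\Ome^{(0)}$ entirely on $\p B_\rho$ do not contribute to these components for $t < \delta_0$, and repeat the selection and induction-in-merging-times argument underlying \eqref{claim-fi-2} to bound the outer boundary of $\FF_t$ by twice the total length of the contributing loops, which lie in $\gamma_\rho$.

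Finally, for the component bound \eqref{levl-locl-5}, Lemma~\ref{lem-level} already gives $\HH^1(\p\tilde G) \leq 2\pi\min\{\delta_0, \dist(\p\tilde G, \p\tilde\Ome^{(0)})\}$ for each component $\tilde G$ of $\tilde\Ome^{(1)}$; this forces $\diam(\tilde G) \leq \pi\delta_0$ by the standard isoperimetric diameter estimate for connected planar sets. A component $\tilde G$ contained in $B_{\rho - 2\delta_0}$ yields a component $G = \tilde G$ of $\Ome^{(1)}_\rho \cap B_{\rho - 2\delta_0}$ for which \eqref{levl-locl-5} follows directly from $\dist(\p\tilde G, \gamma_\rho) \geq \dist(\p\tilde G, \p\tilde\Ome^{(0)})$. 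Components $\tilde G$ that meet $\p B_\rho$ are absorbed into $\Ome^{(0)}_\rho$; this preserves \eqref{levl-locl-24} because $\p\tilde G \cap B_\rho \subset \p\Ome \cap B_\rho$, and one checks by the same localized Gauss--Bonnet argument that \eqref{levl-locl-3} is unaffected since the absorbed loops are short and lie in a thin annulus near $\p B_\rho$.
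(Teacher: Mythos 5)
Your skeleton (truncate to $\Ome\cap B_\rho$, apply Lemma~\ref{lem-level}, then worry about the arcs on $\p B_\rho$) matches the paper's, and you correctly identify the crux: the global estimate only bounds level sets by $2\HH^1(\p\t\Ome^{(0)})$, which includes the spurious boundary on $\p B_\rho$. But your proposed resolution of that crux has a genuine gap. The boundary loops of $\Ome\cap B_\rho$ are in general \emph{mixed}: a single loop consists of arcs of $\p\Ome$ inside $B_\rho$ joined by arcs of $\p B_\rho$. Such a loop is not "entirely on $\p B_\rho$", its $t$--neighborhood does meet $B_{\rho-3\delta_0}$, and the merging--time/Gauss--Bonnet argument of \eqref{claim-fi-2} bounds the level--set length by twice the \emph{full} loop length, including its $\p B_\rho$ portion, which can be of order $\rho$ (note $2\pi\delta_0\leq\rho/8$, so this can vastly exceed $2\HH^1(\gamma_\rho)$). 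Hence your claim that "the contributing loops lie in $\gamma_\rho$" fails. Moreover, the re-run of the Gauss--Bonnet argument on $\mathcal{N}_t(\p\t\Ome^{(0)})\cap B_{\rho-3\delta_0}$ is not justified as stated: the identity $\tau_t=2\pi\chi(\mathcal{N}_t)$ and the derivative formula \eqref{eq-40book} apply to neighborhoods of the full closed curves; cutting the level curves at an artificial circle introduces boundary/corner terms that destroy the monotonicity of $g(t)$, and this localization difficulty is precisely the problem, not a routine adaptation. The final "absorption" of components of $\t\Ome^{(1)}$ meeting $\p B_\rho$ into $\Ome^{(0)}_\rho$ again changes $\gamma_\rho$ and is checked only by appeal to the same unjustified local argument.

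The paper handles this differently, and the two ingredients it uses are what your argument is missing. First, before invoking Lemma~\ref{lem-level}, it modifies $\Ome\cap B_\rho$ inside the annulus $B_\rho\BS\OL{B_{\rho-2\delta_0}}$ (removing or filling components with small trace on the two circles, then straightening the survivors) so that there both the set and its complement are finite unions of wide annulus sectors \eqref{tOme-con}; this does not increase $\HH^1(\p\cdot\cap B_\rho)$ and leaves the set untouched in $B_{\rho-2\delta_0}$, giving \eqref{levloc-symdiff} and \eqref{levl-locl-24}. Second, instead of trying to exclude the $\p B_\rho$ arcs from the global bound, it \emph{compensates} for them: thanks to the sector geometry, each connected component $U$ of $\Ome^{(0)}_\rho\cap(B_\rho\BS\OL{B_{\rho-2\delta_0}})$ satisfies $2\HH^1(\p U\cap\p B_\rho)\leq\HH^1(d_{\p U}^{-1}(t)\cap(B_{\rho+\delta_0}\BS B_{\rho-\delta_0}))$, and summing gives \eqref{eq-annulus}; subtracting this annular contribution from the global estimate \eqref{eq-level_Omega} yields exactly \eqref{levl-locl-3} with the sharp constant $2\HH^1(\gamma_\rho)$. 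Without some such modification-plus-compensation mechanism (or a genuinely new localized version of the Gauss--Bonnet argument with controlled boundary terms), your step (ii) does not go through.
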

\begin{proof}
  Let $\Ome_\rho:=\Ome\cap B_\rho$ and
    $S_{2\delta_0} := B_\rho \BS \overline{B_{\rho-2\delta_0}}$. We first construct a new
    set $\t \Ome_\rho$ such that
    \begin{align}
      \t \Ome_\rho \cap B_{\rho-2\delta_0} \ %
      &= \ \Ome_\rho \cap B_{\rho-2\delta_0}, \label{tOme-same}\\
      \mathcal{H}^1(\p\t{\Ome}_\rho\cap B_\rho) \ %
      &\leq \ \mathcal{H}^1(\p\Ome\cap B_\rho). \label{tOme-bd}
    \end{align}
    Furthermore, every connected component $U$ of $\t \Ome_\rho \cap S_{2\d_0}$
    and every connected component of $(\t \Ome_\rho^c)^o \cap S_{2\d_0}$ is a
    sufficiently wide annulus sector of the form
    \begin{align} \label{tOme-con} %
      U \ = \ \{ \hat x + r e^{i \phi}:  r \in (\rho-2\d_0, \rho), \phi \in
      (\theta, \theta+\Delta\theta) \text{ or } [0,2\pi) \} \ %
      \text{with $\rho\Delta\theta> 4 \d_0$}
    \end{align} %
    for some $\theta, \Delta\theta\in [0,2\pi)$, see
      Fig. \ref{fig-set-modification} for an illustration of the set and the
      construction.  We then apply Lemma \ref{lem-level} to the modified set
    $\t \Ome_\rho$.

  \medskip
  
  \textit{Construction of $\t \Ome_\rho$:} %
  \hks{To construct $\t \Ome_\rho$, we modify iteratively $\Ome_\rho$ as follows:
    \begin{itemize}
    \item We remove any connected component $U$ of
      $\Ome_\rho \cap S_{2\d_0}$ such that 
      \begin{align} \label{U-cond} %
        \HH^1(\p U \cap \p B_{\rho}) \ = \ 0 \qquad \text{or} \qquad %
        \HH^1(\p U \cap \p B_{\rho - 2\delta_0}) \ \leq \ 4 \d_0.
      \end{align}
    \item We fill in the hole related to any connected component $V$ of
      $(\Ome_\rho^c)^o \cap S_{2\d_0}$ such that
      \begin{align} \label{V-cond} %
        \HH^1(\p V \cap \p B_{\rho}) \ = \ 0 \qquad \text{or} \qquad %
        \HH^1(\p V \cap \p B_{\rho - 2\delta_0}) \ \leq \ 4 \d_0.
      \end{align}
    \end{itemize}
    We note that the above modifications might create a new boundary portion
    along $\p B_{\rho-2\delta_0}$. However, the total length of the boundary is
    not increased: If we e.g. remove a connected component $U$, then we might
    create a new boundary at $\p B_{\rho - 2\d_0} \cap \p U$ for the modified
    set. However, if $\HH^1(\p U \cap \p B_{\rho}) = 0$, then removing $U$ does
    not increase the total length of the boundary (since the inner set
    $B_{\rho-2\d_0}$ is convex). If $\HH^1(\p U \cap \p B_{\rho-2\d_0}) \neq 0$
    and $\HH^1(\p U \cap \p B_{\rho}) \leq 4 \d_0$ then again the total length
    of the boundary does not increase (since $\p B_{\rho-2\d_0}$ and
    $\p B_{\rho}$ have distance $2\d_0$). We note that the final set after
      application of the above algorithm is not unique (and depends on the order
      of steps taken). Our argument, however, works independently on the
      specific choice final set. 

    \medskip

    For any remaining connected component $U$ of $\Ome_\rho \cap S_{2\d_0}$, one
    has $\HH^1(\p U \cap \p B_\rho)>0$ and
    $\HH^1(\p U\cap \p B_{\rho-2\delta_0})>4\delta_0$, and moreover
    $\p U \cap \p B_{\rho-2\d_0} = \{ \widehat x + (\rho-2\d_0) e^{i\phi} : \phi
    \in [\theta,\theta+\Delta\theta]) \}$ for some
    $\theta, \Delta\theta\in [0,2\pi)$. Then for any such connected component
    \begin{itemize}
    \item we replace $U$ by the annulus sector of the form \eqref{tOme-con} with
      $\theta, \Delta\theta$ as above.
    \end{itemize}
    By construction, also this modification does not increase the total length
    of the relative boundary in $B_\rho$.  After applying these modifications,
    we hence obtain a set $\t \Ome_\rho$ which satisfies the conditions
    \eqref{tOme-same}, \eqref{tOme-bd} and \eqref{tOme-con}. }
    
  \jf{\begin{figure}
      \hspace{-2cm}
     \vspace*{-0.5cm}
     \includegraphics[height=7cm]{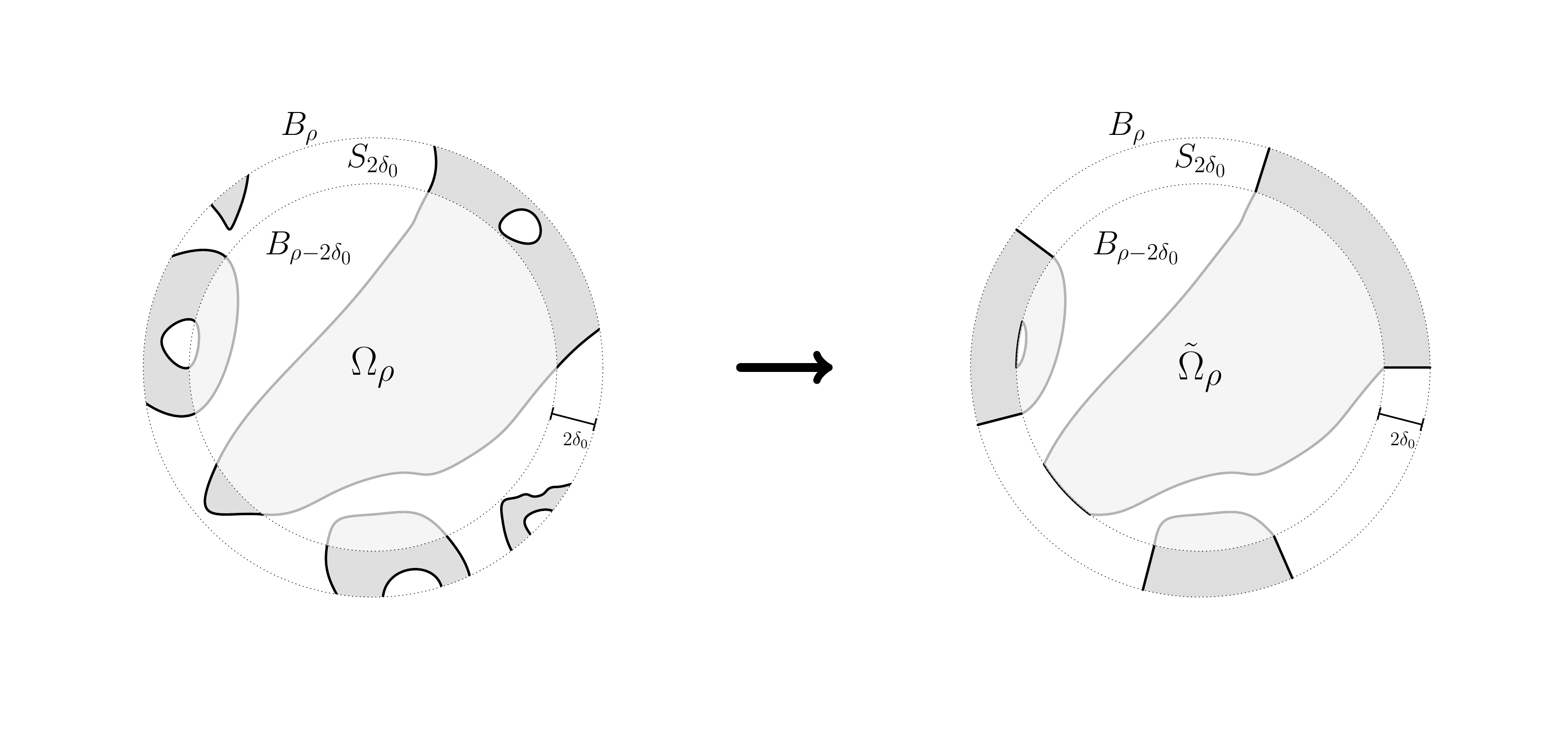}
     \vspace*{-0.3cm}
     \caption{Illustration for the construction of $\t\Ome_\rho$.}
     \label{fig-set-modification}
   \end{figure}}
  
  \medskip

  \textit{Conclusion of proof:} Let $\Ome_\rho^{(0)}$ and $\Ome_\rho^{(1)}$ be
  the sets constructed in Lemma \ref{lem-level} (with $\Ome$ replaced by
    $\tilde\Ome_\rho$). In view of \eqref{tOme-same}, assertions
    \eqref{levloc-symdiff} and  \eqref{levl-locl-5} follow directly from Lemma
  \ref{lem-level}. \ed{Assertion \eqref{levl-locl-24} follows from \eqref{tOme-bd}
  and since by Lemma \ref{lem-level} we have
  $\p\Ome_\rho^{(i)}\cap B_\rho\subset \p\t{\Ome}_\rho\cap B_\rho$ for $i=0,1$.}

  \medskip

  In order to show \eqref{levl-locl-3}, we note that by Lemma \ref{lem-level}(i)
  \begin{gather}
    \label{eq-level_Omega}
    \HH^1(d_{\p\Ome_\rho^{(0)}}^{-1}(t)) \ \leq \ 2\HH^1(\p \Ome_\rho^{(0)})
    \qquad\qquad %
    \text{for a.e. $t\in (0, \d_0)$}.
  \end{gather}
  Since
  $\HH^1(\p\Ome_\rho^{(0)})=\HH^1(\gamma_\rho)+\HH^1(\p\Ome_\rho^{(0)}\cap \p
  B_\rho)$ and since
    $d_{\p\Ome_\rho^{(0)}}^{-1}(t)\cap \overline{B_{\rho-4\delta_0}} \supseteq
    d_{\gam_\rho}^{-1}(t)\cap B_{\rho-4\delta_0}$, then from \eqref{eq-level_Omega} one has
\begin{align}
\HH^1(d_{\gam_\rho}^{-1}(t)\cap B_{\rho-4\delta_0})+ \HH^1(d_{\p\Ome_\rho^{(0)}}^{-1}(t)\cap (B_{\rho+\d_0}\BS B_{\rho-4\delta_0}) ) \leq 2\HH^1(\gamma_\rho)+ 2\HH^1(\p\Ome_\rho^{(0)}\cap \p
  B_\rho).
\end{align}
 Thus  \eqref{levl-locl-3} follows if we can show that
  \begin{align}\label{eq-annulus}
    2\HH^1(\p\Ome_\rho^{(0)}\cap \p B_\rho) \ %
    \leq \ \HH^1(d_{\p\Ome_\rho^{(0)}}^{-1}(t)\cap (B_{\rho+\d_0}\BS B_{\rho-4\delta_0}) ) %
    \quad \text{ for a.e. } t\in (0,\d_0).
  \end{align}
 \ed{We will prove \eqref{eq-annulus} by making use of the simple geometry of
  $\Ome^{(0)}_\rho$ in $S_{2\d_0}$: Indeed, by construction
  $\Ome^{(0)}_\rho \cap S_{2\delta_0}$ and
  $(\Ome^{(0)}_\rho)^c \cap S_{2\delta_0}$ consist of finitely many disjoint
  annulus sectors of the form \eqref{tOme-con}. Thus in view of the simple
  geometry for any connected component $U$ of $\Ome_\rho^{(0)} \cap S_{2\d_0}$
  we have
  \begin{align}\label{eq-annulus-U}
    2\HH^1(\p U \cap \p B_\rho) \ %
    \leq \ \HH^1(d_{\p U}^{-1}(t)\cap (B_{\rho+\delta_0} \BS B_{\rho-\delta_0})) %
    \quad \text{ for } t\in (0,\d_0),
  \end{align}
  We note that $\p\Ome_\rho^{(0)}\cap \p B_\rho$ is the disjoint union of the
  sets $\p U \cap \p B_\rho$ for connected components $U$ of
  $\Ome_\rho^{(0)} \cap S_{2\d_0}$ and --- by the geometry of $\Ome_\rho^{(0)}$
  --- the set
  $d_{\p\Ome_\rho^{(0)}}^{-1}(t)\cap (B_{\rho+\d_0}\BS B_{\rho-4\delta_0})$
  contains the union of the sets
  $d_{\p U}^{-1}(t)\cap (B_{\rho+\delta_0}\setminus B_{\rho-\delta_0})$.
  Furthermore, by construction the sets
  $d_{\p U}^{-1}(t)\cap(B_{\rho+\delta_0}\setminus B_{\rho-\delta_0})$ and
  $d_{\p V}^{-1}(t)\cap (B_{\rho+\delta_0}\setminus B_{\rho-\delta_0})$ are
  disjoint for any two different connected components $U,V$ of
  $\Ome_\rho^{(0)} \cap S_{2\d_0}$.  The estimate \eqref{eq-annulus} then
  follows summing up the estimates \eqref{eq-annulus-U} for each connected
  component $U$. }

\end{proof}

\subsection{Construction of test function} \label{sus-sepcurve}

For the proof of the liminf inequality \eqref{low-bd} in Theorem
\ref{thm-gamma}, we need to show that for any sequence $m_\eps \in \AA$ with
$m_\eps \to m \in \AA_0$ in $L^1$ we have
$\liminf_{\eps > 0} E_\eps[m_\eps] \ \geq \ E_0[m]$. \ed{For the proof, we may
assume that the functions are smooth, i.e. we consider sequences $m_\eps$
  which satisfy
  \begin{align} \label{ass-meps} %
    \begin{aligned}
          &m_\eps \to m  \text{ in $L^1$ as $\eps \to 0$} \\%
          &\qquad  \text{ where } m_\eps = (u_\eps, v_\eps) \in \AA \cap C^\infty(Q_\ell;\mathbb{S}^1) %
          \text{ and where } m = (u, 0) \in \AA_0.
        \end{aligned}
  \end{align}
  Indeed, for a general sequence $m_\eps \in \AA$ one can consider functions
  $m_{\eps,k} \in \AA \cap C^\infty(Q_\ell;\mathbb{S}^1)$ with
  $m_{\eps,k} \to m_\eps$ in $H^1$ for $k \to \infty$. In particular, since the
  energy is continuous with respect to the $H^1$--norm, we also have
  $E_\eps[m_{\eps,k}] \to E_\eps[m_\eps]$ as $k \to \infty$. For the proof of
  the liminf inequality, it is then enough to consider smooth $m_\eps$ by taking a diagonal sequence.} Throughout the
  section, we will also use the notation
  \begin{align} \label{def-Ome0} %
    \Ome_0 \ &:= \ \{ x \in Q_{\ell} \ : \ u(x) \ = \ 1 \}, \\
    \SS_m \ &:= \ \p^* \Ome_0 \text { with measure  theoretic outer normal $n$.}
  \end{align}
Since we often need logarithmic lengh scales, for notational convenience, we write
  \begin{align} \label{def-deps}
    \d_\eps \ := \ \frac 1{|\ln \eps|^{\frac 14}} \qquad
  \end{align}
  throughout this work. For the proof of the liminf inequality we use the
  strategy explained in Section \ref{sus-strategy}. For this, we first give the
  construction of the test functions $\Phi_{\eps,\rho}$
  (cf. \eqref{def-Phieps-0}), associated with the sequence $m_\eps$ and
  localized on the ball $B_{\rho}:=B_\rho(\widehat{x})$ for some fixed
  $\rho\in (0,1)$.  We start with the construction of the separating curves
  $\gam_{\eps,\rho}:=\gam_{\eps,B_{\rho}(\widehat x)}$ with
  $\widehat x\in \SS_m$ (cf. Lemma \ref{lem-separate2}). For that, we first
  choose suitable superlevel sets, whose boundaries converge weakly to the jump
  set of the limit $m$ in $B_{\rho}$ and satisfy a uniform upper bound on the
  lengths of their boundary:
\begin{lemma}[Choice of superlevel sets] \label{lem-separate} %
  Consider a sequence $m_\eps \to m$ which satisfies \eqref{ass-meps} for
    some sequence $\eps \to 0$ and let $\Ome_0$, $\SS_m$, $n$ be given by
    \eqref{def-Ome0}. Let $\widehat x\in \SS_m$ and $\rho\in (0,1)$.  Then there is a subsequence
    $\eps_j \to 0$ and a sequence $t_j \in (-1,1)$ with
    \begin{align} \label{def-teps} %
      |t_j| \ \leq \ 1- \d_\eps \qquad\qquad \forall j \in \N
    \end{align}
    ($\d_\eps$ is given in \eqref{def-deps}) such that the following holds:
    The superlevel sets
  \begin{align} \label{def-omeps-low} %
      \Ome_{\eps_j} \ := \ \{x\in Q_\ell : u_{\eps_j}>t_{j} \}
  \end{align}
  with outer normal $n_{\eps_j}$ satisfy $\p \Ome_{\eps_j} \in
  C^\infty$. Furthermore, with $B_\rho:=B_\rho(\widehat x)$ we have
  \begin{enumerate}
  \item \label{curve-1} %
    $\chi_{\Ome_{\eps_j}}\to \chi_{\Ome_0}\text{ in }L^1(B_\rho)$.
  \item \label{curve-3} %
    $D\chi_{\Ome_{\eps_j}}$ $\wtos$ $D\chi_{\Ome_0}$ weakly$^*$ in $B_{\rho}$.
    In particular,
    \begin{align}
      \NN{D\chi_{\Ome_0}}(B_\rho) \ \leq \ \liminf_{j\to\infty}\NN{D\chi_{\Ome_{\eps_j}}}(B_\rho) \ %
       \leq \ \frac{K}{2}. %
    \end{align}
  \item \label{curve-2} %
    We have (cf. \eqref{def-alp})
    \begin{align}
      \limsup_{j\to \infty} \frac{\NN{D\chi_{\Ome_{\eps_j}}}(B_\rho)}{\NN{D\chi_{\Ome_0}}(B_{\rho})} \ \leq \  \alp  \ %
      := \ \liminf_{\eps \to 0} \frac{\NN{Du_\eps}(B_\rho)}{\NN{Du}(B_\rho)}.
    \end{align} 
  \end{enumerate}
\end{lemma}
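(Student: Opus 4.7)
The argument is a coarea--plus--pigeonhole selection of level values, combined with Sard's theorem to guarantee smoothness of the resulting superlevel sets. The starting point is the compactness computation \eqref{eq-compactness} which, via the coarea formula, gives
\[
  \int_{-1}^{1} \HH^1\big(\{u_\eps = t\} \cap B_\rho\big) \, dt \ = \ \NN{Du_\eps}(B_\rho) \ \leq \ K.
\]
As a preparatory step I would extract a single subsequence (still denoted $\eps$) along which $u_\eps \to u$ pointwise a.e.\ on $B_\rho$, $\NN{Du_\eps}(B_\rho) \to \alp \NN{Du}(B_\rho)$ (realizing the $\liminf$ in \eqref{def-alp}), and, crucially, $\|u_\eps - u\|_{L^1(B_\rho)} \leq \d_\eps^2$; the last condition can always be arranged by further thinning since the rate $\d_\eps$ does not itself depend on the subsequence.

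For each such $\eps$, I apply Markov's inequality on the truncated interval $I_\eps := (-1+\d_\eps,1-\d_\eps)$: the set of $t \in I_\eps$ with
\[
  \HH^1\big(\{u_\eps = t\}\cap B_\rho\big) \ \leq \ \frac{\NN{Du_\eps}(B_\rho)}{2(1-\d_\eps)^2}
\]
has Lebesgue measure at least $2\d_\eps(1-\d_\eps) > 0$. Intersecting with the full-measure set of regular values of the smooth function $u_\eps$ (Sard's theorem) yields a positive-measure set of admissible levels, from which I pick any $t_j := t_{\eps_j}$. By construction $|t_j| \leq 1-\d_{\eps_j}$ and $\p\Ome_{\eps_j}$ is an embedded smooth $1$-manifold by the implicit function theorem. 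Dividing the pigeonhole bound by $\NN{D\chi_{\Ome_0}}(B_\rho) = \HH^1(\SS_m\cap B_\rho) = \tfrac 12 \NN{Du}(B_\rho)$ and sending $\eps_j \to 0$ yields (iii). Replacing $\NN{Du_\eps}(B_\rho)$ on the right-hand side by the uniform bound $K$ gives $\NN{D\chi_{\Ome_{\eps_j}}}(B_\rho) \leq K/(2(1-\d_{\eps_j})^2)$, whose $\limsup$ is $K/2$; combined with the lower semicontinuity of the BV norm under $L^1$ convergence (once (i) is known), this delivers the chain of inequalities in (ii), and the weak$^*$ convergence in (ii) then follows from standard BV compactness.

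The part that demands care is assertion (i), because the selected level $t_j$ drifts toward $\pm 1$ and one cannot invoke dominated convergence at a fixed level. This is precisely where the preparatory rate $\|u_{\eps_j} - u\|_{L^1(B_\rho)} \leq \d_{\eps_j}^2$ enters: by Markov,
\[
  \big|\{x \in B_\rho : |u_{\eps_j}(x) - u(x)| > \d_{\eps_j}\}\big| \ \leq \ \d_{\eps_j} \ \to \ 0,
\]
and on the complementary set $u(x)=1$ forces $u_{\eps_j}(x) \geq 1-\d_{\eps_j} > t_j$, while $u(x)=-1$ forces $u_{\eps_j}(x) \leq -1+\d_{\eps_j} < t_j$, using the strict inclusion $t_j \in I_{\eps_j}$. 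Hence $\chi_{\Ome_{\eps_j}}$ agrees with $\chi_{\Ome_0}$ off a set of vanishing measure, which gives (i). The main technical point of the proof is exactly this calibration of the $L^1$ decay of $u_\eps - u$ against the gap $\d_\eps$ separating $t_\eps$ from $\pm 1$; once it is in place, (i)--(iii) follow mechanically from coarea, Sard, and lower semicontinuity.
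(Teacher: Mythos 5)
Your selection machinery (coarea, Markov in $t$ over the truncated interval, Sard, lower semicontinuity of the perimeter) is sound, and the way you obtain (iii) and the bound $K/2$ from the near-optimal threshold $\tfrac{\|Du_{\eps}\|(B_\rho)}{2(1-\delta_{\eps})^2}$ does go through. The genuine gap is the preparatory claim that, after further thinning, one may assume $\|u_{\eps}-u\|_{L^1(B_\rho)} \leq \delta_{\eps}^2$. Both sides of that inequality are determined by the value of $\eps$ alone: passing to a subsequence only discards values of $\eps$, it cannot create the inequality for the surviving ones. Since \eqref{ass-meps} gives $L^1$ convergence with no rate, it can happen that, say, $\|u_{\eps}-u\|_{L^1(B_\rho)} \geq |\ln\eps|^{-1/8} \gg \delta_{\eps}^2$ for every $\eps$ of the original sequence, and then no subsequence satisfies your condition; your parenthetical justification ("$\delta_\eps$ does not depend on the subsequence") is exactly the reason thinning cannot help — it would help only if the threshold were an arbitrary null sequence indexed by the position $j$ (e.g. $2^{-j}$), not by $\eps_j$. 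Since your proof of (i) rests entirely on this calibration (you need $|\{|u_{\eps_j}-u|>\delta_{\eps_j}\}|\to 0$, which your Markov step extracts from a rate $\|u_{\eps_j}-u\|_{L^1}=o(\delta_{\eps_j})$), assertion (i), and with it the weak$^*$ convergence and the lower-semicontinuity half of (ii), is not established as written.

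For comparison, the paper needs no rate: it first upgrades, using the monotonicity of $t\mapsto\chi_{\{u_{\eps_j}>t\}}$, the a.e.-$t$ convergence $\chi_{\{u_{\eps_j}>t\}}\to\chi_{\Ome_0}$ in $L^1(B_\rho)$ to \emph{every} fixed $t\in(-1,1)$; it then selects, for each margin $\tfrac1\ell$, a \emph{fixed} level $t_\ell$ along a sub-subsequence by a contradiction argument based on the coarea formula, and finally diagonalizes. Because each $t_\ell$ is fixed, the constraint $|t_\ell|\leq 1-\delta_{\eps}$ holds automatically for small $\eps$ and (i) follows from the fixed-level convergence. If you wish to keep your drifting-level scheme, the natural repair is a second pigeonhole in $t$ via the layer-cake identity $\|u_{\eps}-u\|_{L^1(B_\rho)}=\int_{-1}^{1}\|\chi_{\{u_{\eps}>t\}}-\chi_{\Ome_0}\|_{L^1(B_\rho)}\,dt$; but note that with your sharp perimeter threshold the good $t$-set has measure only about $2\delta_{\eps}$, so the closeness you can guarantee is of order $\|u_{\eps}-u\|_{L^1}/\delta_{\eps}$, which again need not vanish. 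One is therefore forced either to relax the perimeter threshold to $\tfrac{\|Du_{\eps}\|(B_\rho)}{2(1-\mu)}$ for a fixed $\mu>0$ (obtaining the ratio $\alp/(1-\mu)$ in (iii) and then diagonalizing in $\mu$), or to follow the paper's fixed-level route.
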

\begin{proof}
  For $t \in (-1,1)$ and $\eps \geq 0$, we write
  $\Ome^{\eps}_{t} :=\{x\in Q_\ell: u_{\eps}>t \}$. We choose a subsequence
  $\eps_j \to 0$ such that
  $\lim_{j\to \infty}\|Du_{\eps_j}\|(B_\rho)\rightarrow \alp\|Du\|(B_\rho)$,
  where $\alp$ is given in (iii).  Since $u_{\eps_j}\to u_0$ in $L^1(B_\rho)$,
  by taking a further subsequence (not relabeled), we have
  $\chi_{\Ome^{\eps_j}_{t}}\to \chi_{\Ome^0_t}$ in $L^1(B_\rho)$ for a.e.
  $t\in (-1,1)$, where $\Ome^0_t =\Ome_0$ for $t\in (-1,1)$.  Since
  $\chi_{\Ome^0_t}=\chi_{\Ome_0}$ is independent of $t$ and
  $t\mapsto \chi_{\Ome^{\eps}_t}$ is monotonically decreasing, the convergence
  holds for any superlevel set,
  \begin{align}\label{eq-L1-u}
    \chi_{\Ome^{\eps_j}_{t}} \ \to \ \chi_{\Ome_0} \qquad \text{ in $L^1(B_\rho)$ for any 
    $t\in (-1,1)$}.
  \end{align}
  Let $\mathcal{S}_j\subset (-1,1)$ denote the set of singular values $t$ such
  that $\p \Ome_t^{\eps_j}\cap B_\rho$ is not smooth for $t \in \SS_j$. Then
  $\mathcal{S}:=\bigcup_j \mathcal{S}_j$ has Lebesgue measure zero by Sard's
  theorem.  We aim to find a subsequence $(\eps_{\ell})_{\ell \in \N}$ of
  $\eps_j$ and a sequence $(t_\ell)_{\ell \in \N}$ such that the assertions are
  satisfied.  We claim that for any $\ell \in \N$ there exists
  $t_\ell\in (-1,1)\BS \SS$ and a subsequence $(\eps_{j_k(\ell)})_{k \in \N}$ of
  $(\eps_j)$ such that
  \begin{align}\label{cu-claim}
    \NN{D\chi_{ \Ome^{\eps_{j_k(\ell)}}_{t_\ell}}}(B_\rho) \ %
    \leq \ \alp\NN{D\chi_{\Ome_0}}(B_\rho)+ \frac 1\ell
    \qquad \text{for all $k \in \N$.}
  \end{align}
  Indeed, if not, then
  $\NN{D\chi_{\Ome^{\eps_j}_t}}(B_\rho) >
  \alp\NN{D\chi_{\Ome_0}}(B_\rho)+\frac 1\ell$ for any
  $t \in (-1,1)\BS \SS$ and for any $j \geq j_0(t)$ sufficiently large. By the
  coarea formula, we have
  \begin{align} \label{curve-coarea} %
    \NN{Du_{\eps_j}}(B_\rho) \ = \ \int_{-1}^1 \NN{D\chi_{\Ome^{\eps_j}_t}}(B_\rho) \ dt. %
  \end{align}
  Taking the limit in \eqref{curve-coarea} and using \eqref{def-alp} and
  \eqref{cu-claim} then implies $\alp\NN{D\chi_{\Ome_0}}(B_\rho)$ $>$
  $\alp\NN{D\chi_{\Ome_0}}(B_\rho)+ \frac 1\ell$, which is a contradiction and
  hence yields \eqref{cu-claim}.

  \medskip
  
  For each fixed $\ell \in \N$, there is $k_\ell \in \N$ large such that
  $|t_\ell| < 1 - \d_{\eps_{j_k(\ell)}}$ for all $k\geq k_\ell$. In view of
  \eqref{eq-L1-u}, by making $k_\ell$ possibly larger we have
  $\NNN{\chi_{\Ome^{\eps_{j_{k_\ell}(\ell)}}_{t_\ell}} -
    \chi_{\Ome_0}}{L^1(B_\rho)} \leq \frac 1\ell$. For the diagonal
  sequence $\eps_\ell:=\eps_{j_{k_\ell(\ell)}}$, we then have
  $|t_\ell| < 1 - \d_{\eps_{\ell}}$, $\Ome_{\eps_\ell}$ has a smooth boundary
  and (i) holds. In view of \eqref{cu-claim}, (iii) holds.
  
  \medskip
  
  It remains to show (ii): In view of \ref{curve-1}, we get
  \begin{align} \label{ii-fast}
    \int_{\Ome^{\eps_{\ell}}_{t_\ell}} \dv \zet \ dx \ \to \ \int_{\Ome_0} \dv
    \zet \ dx \qquad\qquad \text{for any $\zet\in C^1_c(B_\rho)$}. 
  \end{align}
  From \ref{curve-2}, it follows that the sets
  $\Ome_{\eps_\ell}:=\Ome^{\eps_{\ell}}_{t_\ell}$ have uniformly bounded perimeter. Together
  with \eqref{ii-fast}, this implies the weak convergence
  $D\chi_{\Ome_{\eps_\ell}} \wtos D\chi_{\Ome_0}$. The second claim in (ii) follows
  from the lower semi-continuity of the total variation measure.
\end{proof}
The sets $\p\Ome_{\eps_j} \cap B_\rho$, where $\Ome_{\eps_j}$ are the
  superlevel sets constructed in Lemma \ref{lem-separate}, cannot be
directly used for the definition of the separating curves as described in
  Section \ref{sus-strategy}, as the capacity of
$\p\Ome_{\eps_j} \cap B_\rho$ might be too large.  However, using
Theorem \ref{thm-level-local} in Section \ref{sus-levelset}, in the next
  lemma we construct a slightly modified set
  $\Ome_{\eps_j,\rho}^{(0)} \SUS B_\rho$ which approximates
  $\Ome_{\eps_j} \cap B_\rho$ and such that
  $\gam_{\eps_j,\rho} := \p\Ome_{\eps_j,\rho}^{(0)} \cap B_\rho$
  satisfies the desired properties of the separating curves in Section
  \ref{sus-strategy}:
\begin{lemma}[Separating curves]\label{lem-separate2}
 \hks{ Consider a sequence $m_\eps \to m$ with $\eps\to 0$ which satisfies \eqref{ass-meps} and let $\Ome_0$, $\SS_m$, $n$ be given by
    \eqref{def-Ome0}.} Then for any $\widehat x\in \SS_m$ there is
    $\hat\rho > 0$ such that for any $\rho \in (0,\hat\rho)$ the
    following holds: Let $\eps_j \to 0$ and $t_j \in (-1,1)$ be the sequences
    constructed in Lemma \ref{lem-separate} and let $\Ome_{\eps_j}$ be defined by
    \eqref{def-omeps-low}.  Then there exist sets
    $\Ome_{\eps_j,\rho}^{(0)}, \Ome_{\eps_j,\rho}^{(1)} \SUS
    B_\rho(\widehat x) =: B_\rho$ such that the following holds:
  \begin{enumerate}
\item \label{sep2-1} %
    $\chi_{\Ome_{\eps_j,\rho}^{(0)}}\to \chi_{\Ome_{0}}\text{ in }L^1(B_\rho)$ as $j \to  \infty$.
  \item \label{sep2-3} %
    Let
    $\gam_{\eps_j,\rho} := \p\Ome_{\eps_j,\rho}^{(0)} \cap B_\rho$
    and let $n_{\eps_j,\rho}$ be the outer normal of
    $\gam_{\eps_j,\rho}$ w.r.t.
    $\Ome_{\eps_j,\rho}^{(0)}$. Then we have
    $n_{\eps_j,\rho} \HH^1\llcorner \gamma_{\eps_j,\rho} \
    \wtos \ n \HH^1\llcorner \SS_m$ in $B_\rho$.  In particular
    \begin{align}
      \HH^1(\SS_m\cap B_\rho) \ \leq \ \liminf_{j \to
      \infty}\HH^1(\gamma_{\eps_j,\rho}).
    \end{align}
  \item\label{sep2-0a} %
    $\HH^1(\gamma_{\eps_j,\rho})\leq \HH^1(\p\Ome_{\eps_j}\cap
    B_\rho)\leq K$.
  \item \label{sep2-2} We have
    \begin{align} \label{def-alp2} %
      \limsup_{{\eps_j} \to 0} \frac{\HH^1(\gamma_{{\eps_j},\rho})}{\HH^1(\SS_m\cap B_\rho)} \ %
      \leq \ \alp \ := \ \liminf_{\eps \to 0} \frac{\NN{Du_\eps}(B_\rho)}{\NN{Du}(B_\rho)}.
    \end{align}
  \item \label{sep2-0b}  %
    We have the level set estimate
    \begin{align} 
      \qquad\qquad %
      \HH^1(d^{-1}_{\gamma_{{\eps_j},\rho}}(t)\cap B_{\rho-8\delta_{\eps_j}}) \ %
      \leq \ 2\HH^1(\gamma_{{\eps_j},\rho}) \quad \forall t\in (0,2\delta_{\eps_j}).
    \end{align}
  \item\label{sep2-auchnoch}
    $|\Ome_{{\eps_j},\rho}^{(1)}| \ \leq \ C\ed{(K+1)}\d_{\eps_j}  \ \to 0$ \ %
    \qquad \text{as $j \to \infty$}.
  \item \label{sep2-4a} %
    $\HH^1(\ed{\p \Ome_{{\eps_j},\rho}^{(1)}\cap B_{\rho-4\delta_{\eps_j}}}) \ %
    \leq \ \HH^1(\ed{\p \Ome}_{\eps_j}\cap
    B_\rho) \ \leq \ K $.
  \item \label{sep2-4b}
    The connected components  $G_{{\eps_j},\rho}^{(k)}$ of  $\Ome_{{\eps_j},\rho}^{(1)}\cap B_{\rho-4\delta_{\eps_j}}$ satisfy
 \begin{align}
   \frac 1{2\pi} \HH^1(\p G_{{\eps_j},\rho}^{(k)}) \ \leq \ \min \{2\delta_{\eps_j},\dist(\p G_{{\eps_j},\rho}^{(k)}, \gamma_{{\eps_j},\rho})\}.
\end{align}
\item \label{sep2-dazu} %
  \ed{We have
  \begin{align}
      1 + u_{{\eps_j}} \ \geq \ \d_{\eps_j} %
        &\qquad\qquad\text{in  $B_{\rho-4\delta_{\eps_j}} \cap \Ome_{{\eps_j},\rho}^{(0)} \BS \Ome_{{\eps_j},\rho}^{(1)}$,} \\
      1 - u_{{\eps_j}}\ \geq \ \d_{\eps_j}  %
        &\qquad\qquad\text{in $B_{\rho-4\delta_{\eps_j}} \cap (\Ome_{{\eps_j},\rho}^{(0)})^c\BS  \Ome_{{\eps_j},\rho}^{(1)}$}. 
  \end{align}}
  \end{enumerate}
\end{lemma}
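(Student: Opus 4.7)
The plan is to apply Lemma~\ref{lem-separate} to extract a subsequence $\eps_j\to 0$ and levels $t_j\in(-1,1)$ with $|t_j|\leq 1-\d_{\eps_j}$ such that $\Ome_{\eps_j}:=\{u_{\eps_j}>t_j\}$ has smooth boundary, and then to invoke the local level set estimate Theorem~\ref{thm-level-local} applied to $\Ome_{\eps_j}$ inside $B_\rho:=B_\rho(\widehat x)$ with the particular scale $\d_0:=2\d_{\eps_j}$. This choice of $\d_0$ matches the smaller balls appearing in the statement, since $\rho-2\d_0=\rho-4\d_{\eps_j}$ and $\rho-4\d_0=\rho-8\d_{\eps_j}$. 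The sets $\Ome_{\eps_j,\rho}^{(0)}$ and $\Ome_{\eps_j,\rho}^{(1)}$ are then taken to be the subsets of $B_\rho$ furnished by the theorem. For the hypotheses of Theorem~\ref{thm-level-local} to be satisfied for $j$ large, I use that $\widehat x\in\SS_m=\p^*\Ome_0$ is a reduced boundary point, so $\HH^1(\SS_m\cap B_\rho)>0$ for every sufficiently small $\rho\leq\hat\rho$; combined with Lemma~\ref{lem-separate}\ref{curve-3} this yields $\HH^1(\p\Ome_{\eps_j}\cap B_\rho)\geq\tfrac12\HH^1(\SS_m\cap B_\rho)\geq 4\pi\d_{\eps_j}$ eventually, and trivially $4\pi\d_{\eps_j}\leq\rho/8$ for $j$ large.

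With the decomposition in place, most of the nine assertions follow by direct substitution. Items \ref{sep2-0a}, \ref{sep2-4a}, \ref{sep2-0b} and \ref{sep2-4b} are essentially restatements of \eqref{levl-locl-24}, \eqref{levl-locl-3} and \eqref{levl-locl-5}, where the overall upper bound by $K$ comes from Lemma~\ref{lem-separate}\ref{curve-3}. For \ref{sep2-2} one combines $\HH^1(\gam_{\eps_j,\rho})\leq\HH^1(\p\Ome_{\eps_j}\cap B_\rho)$ from \eqref{levl-locl-24} with the ratio bound of Lemma~\ref{lem-separate}\ref{curve-2}. Assertion \ref{sep2-dazu} is immediate from $|t_j|\leq 1-\d_{\eps_j}$ together with the symmetric difference identity \eqref{levloc-symdiff}: inside $B_{\rho-4\d_{\eps_j}}=B_{\rho-2\d_0}$ any point of $\Ome_{\eps_j,\rho}^{(0)}\BS\Ome_{\eps_j,\rho}^{(1)}$ lies in $\Ome_{\eps_j}$, hence $u_{\eps_j}>t_j\geq-1+\d_{\eps_j}$, while any point of $(\Ome_{\eps_j,\rho}^{(0)})^c\BS\Ome_{\eps_j,\rho}^{(1)}$ lies outside $\Ome_{\eps_j}$, hence $u_{\eps_j}\leq t_j\leq 1-\d_{\eps_j}$.

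The three remaining assertions require a bit more work. For \ref{sep2-auchnoch}, \ref{sep2-4b} implies that every connected component $G$ of $\Ome_{\eps_j,\rho}^{(1)}\cap B_{\rho-4\d_{\eps_j}}$ satisfies $\HH^1(\p G)\leq 4\pi\d_{\eps_j}$, and the planar isoperimetric inequality then yields $|G|\leq C\HH^1(\p G)^2\leq C\d_{\eps_j}\HH^1(\p G)$; summation and the perimeter bound \ref{sep2-4a} give $|\Ome_{\eps_j,\rho}^{(1)}\cap B_{\rho-4\d_{\eps_j}}|\leq CK\d_{\eps_j}$, while the thin annulus $B_\rho\BS B_{\rho-4\d_{\eps_j}}$ adds an extra $O(\rho\d_{\eps_j})$. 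Assertion \ref{sep2-1} follows from the splitting $\chi_{\Ome_{\eps_j,\rho}^{(0)}}-\chi_{\Ome_0}=(\chi_{\Ome_{\eps_j,\rho}^{(0)}}-\chi_{\Ome_{\eps_j}})+(\chi_{\Ome_{\eps_j}}-\chi_{\Ome_0})$: on $B_{\rho-4\d_{\eps_j}}$ the first difference is pointwise dominated by $\chi_{\Ome_{\eps_j,\rho}^{(1)}}$ via \eqref{levloc-symdiff}, on the complementary annulus it is controlled by the vanishing area, and the second difference tends to zero in $L^1$ by Lemma~\ref{lem-separate}\ref{curve-1}. Finally \ref{sep2-3} follows by identifying the distributional derivative on the open ball, $D\chi_{\Ome_{\eps_j,\rho}^{(0)}}\llcorner B_\rho=n_{\eps_j,\rho}\HH^1\llcorner\gam_{\eps_j,\rho}$, and deducing weak$^*$ convergence from the $L^1$ convergence \ref{sep2-1} together with the uniform BV bound \ref{sep2-0a}, in the same manner as in the corresponding step of Lemma~\ref{lem-separate}.

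The main obstacle is really the careful calibration of scales so that Theorem~\ref{thm-level-local} applies uniformly in $j$ with the scale $\d_0=2\d_{\eps_j}$ that vanishes as $j\to\infty$; once this has been arranged and $\widehat x$ is exploited as a reduced boundary point to secure the perimeter lower bound on $\p\Ome_{\eps_j}\cap B_\rho$, each of the nine assertions reduces to either a direct substitution from the theorem, a standard BV/weak$^*$ argument, or the planar isoperimetric inequality.
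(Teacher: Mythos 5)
Your proposal is correct and follows essentially the same route as the paper: apply Lemma \ref{lem-separate}, verify the hypotheses of Theorem \ref{thm-level-local} with $\delta_0=2\delta_{\eps_j}$ (using that $\widehat x\in\SS_m$ gives a positive perimeter lower bound in $B_\rho$), take $\Ome^{(0)}_{\eps_j,\rho},\Ome^{(1)}_{\eps_j,\rho}$ from that theorem, and then obtain (iii), (v), (vii), (viii) by substitution, (vi) via the isoperimetric inequality, (i)–(ii) via the symmetric-difference identity plus the standard $L^1$/uniform-perimeter weak$^*$ argument, (iv) from the ratio bound, and (ix) from $|t_j|\leq 1-\delta_{\eps_j}$ together with \eqref{levloc-symdiff}. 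The only nit is that the uniform bound $\HH^1(\p\Ome_{\eps_j}\cap B_\rho)\leq K$ is best cited from Lemma \ref{lem-separate}(iii) and \eqref{def-alp2} (as in the paper) rather than from the liminf statement in item (ii), but this is immaterial.
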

\begin{proof}
  We use the short notation $\eps := \eps_j$.  Since $\hat x \in \SS_m$ and
    by Lemma \ref{lem-separate}(ii), for each $\rho\in (0,\hat\rho)$ with $\hat\rho$ sufficiently small we have
    $\HH^1(\p \Ome_\eps\cap B_\rho) \geq 4\pi \delta_\eps$ for sufficiently small $\eps$. Moreover, by Lemma \ref{lem-separate}(iii) and
    \eqref{def-alp2}, for $\eps$ sufficiently small one has
    $\HH^1(\p\Ome_\eps\cap B_\rho)\leq K$. We hence can apply Theorem
    \ref{thm-level-local} to the set $\Ome:=\Ome_{\eps}$ and
    $\delta_0:=2\delta_\eps$.  The application of this theorem yields two sets
    $\Ome_{\eps,\rho}^{(0)}$, $\Ome_{\eps,\rho}^{(1)} \SUS B_\rho$ which satisfy
    \ref{sep2-0a}, \ref{sep2-0b}, \ref{sep2-4a} and \ref{sep2-4b}.

  \medskip
    
  \textit{\ref{sep2-auchnoch}:} By the isoperimetric inequality and by
  assertions \ref{sep2-4a} and \ref{sep2-4b}, we have
    \begin{align}
      |\Ome^{(1)}_{\eps,\rho}| \ %
      &\stackrel{\ref{sep2-4b}}= \ \sum_k |G_{\eps,\rho}^{(k)}| +|B_\rho\BS B_{\rho-4\delta_\eps}| %
        \leq  \delta_{\eps}\HH^1(\p (\Ome^{(1)}_{\eps,\rho}\cap B_{\rho-4\d_\eps})) + |B_{\rho}\BS B_{\rho-4\delta_\eps}|\notag\\
      &\stackrel{\ref{sep2-4a}}\leq \ \d_\eps\HH^1(\p \Ome_{\eps}\cap B_\rho) + C\rho\delta_\eps\ %
        \lupref{meps-bound}\leq  \ C\d_\eps (K+\rho) \ \to 0.\label{eq-E1}
    \end{align}
    \textit{\ref{sep2-1}, \ref{sep2-3}:} Assertion \ref{sep2-1} follows from
    \eqref{eq-E1} together with Lemma \ref{lem-separate}\ref{curve-1}. Assertion
    \ref{sep2-3} then follows from \ref{sep2-1} and the uniform boundedness of
    the perimeter of $\Ome^{(0)}_{\eps,\rho}$, as in the proof for Lemma
    \ref{lem-separate}.

  \medskip

  \textit{\ref{sep2-2}:} \ed{From \ref{sep2-0a} we have
  $\HH^1(\gamma_{\eps,\rho})\leq\HH^1(\p\Ome_\eps\cap B_\rho)$.  Taking the
  limsup and using Lemma \ref{lem-separate}\ref{curve-2} we obtain
  \ref{sep2-2}.}

    \medskip

    \textit{\ref{sep2-dazu}:} \ed{In view of \eqref{levloc-symdiff}, we
      have
      $(\Ome_{\eps,\rho}^{(0)} \BS \Ome_{\eps,\rho}^{(1)}) \cap
      B_{\rho-4\delta_\eps} \subset \Ome_{\eps}$ and
      $((\Ome_{\eps,\rho}^{(0)})^c \BS \Ome_{\eps,\rho}^{(1)}) \cap
      B_{\rho-4\delta_\eps} \subset \Ome_\eps^c$. In view of the definition of
      $\Omega_\eps$ in \eqref{def-omeps-low} and \eqref{def-teps}, we conclude
      that \ref{sep2-dazu} holds.}
\end{proof}
We are ready to give the definition of the test function $\Phi_{\eps_j,\rho}$:
\begin{definition}[Test function $\Phi_{\eps_j,\rho}$] \label{def-psieps} %
  Consider a sequence $m_\eps \to m$ with $\eps\to 0$ which satisfies \eqref{ass-meps} and let
  $\Ome_0$, $\SS_m$, $n$ be given by \eqref{def-Ome0}.  Let $\hat\rho > 0$ be
  the constant from Lemma \ref{lem-separate2}.  Let $\gamma_{\eps_j,\rho}$ be a
  sequence of separating curves associated with $B_\rho(\widehat x)$ with
  $\widehat x\in \SS_m$ and $\rho\in (0,\hat\rho)$, which satisfies the
  assertions of Lemma \ref{lem-separate2}.  We define
  $\Phi_{\eps_j,\rho} \in \Lip(Q_\ell) $ with
  $\spt \Phi_{\eps_j,\rho} \Subset
  d_{\gam_{\eps_j,\rho}}^{-1}([0,2\delta_{\eps_j}))$ by
  \begin{align} \label{def-Phieps} %
    \Phi_{\eps_j,\rho}(x) \ := \ \eta_{\eps_j}(d_{\gam_{\eps_j,\rho}}(x)),
  \end{align}
  where $\d_\eps$ is defined in \eqref{def-deps} and $\eta_\eps \in C_c^\infty(\R)$ for $\eps\in (0,\frac 14)$ is given by
  \begin{align} \label{def-eta} %
    \hks{\eta_\eps(t) %
    := - \frac 1{|\ln \eps|} \ln \sqrt{\frac{1}{\delta_\eps^2} (|t|- \eps)^2 + \eps^2} \quad\qquad
    \FU{|t| \in ({\eps}, {\eps} + \d_\eps \sqrt{1-\eps^2})}}
  \end{align}
  with $\eta_\eps(t) := 1$ for $|t| < \eps$ and $\eta_\eps(t) := 0$ for
  $|t| > {\eps} + \delta_\eps \sqrt{1-\eps^2}$.
\end{definition}
 We collect some estimates for the one-dimensional logarithmic profile $\eta_\eps$: 
\begin{lemma}[Estimates for 1-d profile] \label{lem-test1d} %
  For $\eps \in (0,1)$, the function $\eta_\eps$, defined in \eqref{def-eta},
  satisfies $\spt \eta_\eps \CUS (-2\d_\eps,2\d_\eps)$ and 
  \begin{enumerate}
  \item $\displaystyle \int_\R |(\frac d{dt})^{\frac 12} \eta_\eps|^2 \ dt \ %
    \leq \ \pi\int_{0}^{1} t |\eta'_\eps(t)|^2 \ dt \ 
    \leq \ \frac{\pi}{|\ln \eps|} + \frac C{|\ln \eps|^{\frac 32}}$,
  \item
    $\displaystyle \frac 1{\d_\eps} \int_\R |\eta_\eps|^2 \ dt 
    + \eps \d_\eps  \int_\R | \eta'_\eps|^2 \ dt \ \leq \ \frac{C}{|\ln \eps|^2}$.
  \end{enumerate}
\end{lemma}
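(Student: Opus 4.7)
The plan is to check each claim by a direct calculus computation, combined with a sharp trace estimate for the $\dot H^{1/2}$--seminorm. The support claim is immediate: since $\delta_\eps = |\ln\eps|^{-1/4} > \eps$ for $\eps \in (0,\tfrac14)$, we have $\eps + \delta_\eps\sqrt{1-\eps^2} < \eps + \delta_\eps < 2\delta_\eps$.

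For the first inequality in (i), I would exploit that $\eta_\eps$ is even and extend it radially to the upper half plane by $\Phi(x_1,x_2) := \eta_\eps(\sqrt{x_1^2+x_2^2})$, which satisfies $\Phi(x_1,0) = \eta_\eps(|x_1|) = \eta_\eps(x_1)$. Since the Poisson extension is energy--minimizing, the Fourier representation \eqref{frac-sob} yields $\|\eta_\eps\|_{\dot H^{1/2}(\R)}^2 = \int_\R |\xi||\widehat{\eta_\eps}|^2\,d\xi \le \int_{\R^2_+}|\nabla\Phi|^2\,dx$ for any extension with the correct trace. Computing in polar coordinates gives
\[
  \int_{\R^2_+}|\nabla\Phi|^2\,dx \ = \ \int_0^\pi\int_0^\infty r|\eta_\eps'(r)|^2\,dr\,d\theta \ = \ \pi\int_0^\infty r|\eta_\eps'(r)|^2\,dr,
\]
and since $\spt\eta_\eps \Subset (-1,1)$ the right--hand side equals $\pi\int_0^1 t|\eta'_\eps|^2\,dt$.

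For the remaining estimates, I would differentiate \eqref{def-eta} to obtain, on $r := t-\eps \in (0,\delta_\eps\sqrt{1-\eps^2})$,
\[
  \eta_\eps'(t) \ = \ -\frac{1}{|\ln\eps|}\cdot\frac{r}{r^2+\eps^2\delta_\eps^2}.
\]
The substitution $u = r^2+\eps^2\delta_\eps^2$ then gives
\[
  \int_0^{\delta_\eps\sqrt{1-\eps^2}}\frac{r^3}{(r^2+\eps^2\delta_\eps^2)^2}\,dr
  \ = \ \tfrac12\!\int_{\eps^2\delta_\eps^2}^{\delta_\eps^2}\!\!\Big(\tfrac{1}{u}-\tfrac{\eps^2\delta_\eps^2}{u^2}\Big)du
  \ = \ |\ln\eps| + O(1),
\]
while the remainder produced by the $\eps(r+\eps)$ correction is bounded, using $r^2/(r^2+\eps^2\delta_\eps^2)^2 \le 1/(r^2+\eps^2\delta_\eps^2)$ and the arctangent integral, by $O(\delta_\eps^{-1}) = O(|\ln\eps|^{1/4})$. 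Dividing by $|\ln\eps|^2$ yields the second inequality of (i) with remainder $O(|\ln\eps|^{-7/4}) \le C|\ln\eps|^{-3/2}$. For (ii), I split $\int|\eta_\eps|^2 = 2\eps + 2\!\int_\eps^{\eps+\delta_\eps\sqrt{1-\eps^2}}|\eta_\eps|^2\,dt$; on the logarithmic region the rescaling $s = r/\delta_\eps$ and the pointwise bound $|\eta_\eps|^2 \le |\ln(s^2+\eps^2)|^2/(4|\ln\eps|^2)$ together with the uniform boundedness of $\int_0^1|\ln(s^2+\eps^2)|^2\,ds$ produce $\int|\eta_\eps|^2 \le C\delta_\eps/|\ln\eps|^2$, so the first term is $O(|\ln\eps|^{-2})$. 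For the derivative term, the same arctangent estimate gives $\int|\eta'_\eps|^2\,dt \le \pi/(\eps\delta_\eps|\ln\eps|^2)$, hence $\eps\delta_\eps\int|\eta'_\eps|^2\,dt = O(|\ln\eps|^{-2})$.

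The main obstacle is the sharp constant $\pi$ in the trace inequality of (i); this is essential because the prefactor $\pi/|\ln\eps|$ in \eqref{gen-1} must be exact in order to balance against the length--capacity bound and yield the correct $\Gamma$--limit constants downstream. Once this sharp trace bound is secured via the radial extension, everything else reduces to elementary bookkeeping of remainder terms.
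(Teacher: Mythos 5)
Your proof is correct and follows essentially the same route as the paper: the sharp constant $\pi$ in (i) comes from the half-plane trace characterization of the $\dot H^{1/2}$--seminorm with the radially symmetric extension, and the remaining bounds are the same elementary substitutions applied to the explicit formula for $\eta_\eps'$. The only cosmetic difference is that you keep $\delta_\eps$ explicit (yielding the remainder $O(|\ln\eps|^{-7/4})$ from the plateau offset), whereas the paper first rescales $\delta_\eps$ to $1$; both land within the stated $C|\ln\eps|^{-3/2}$ error.
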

\begin{proof}
  We calculate
  \begin{align} 
    \eta_\eps'(t) \ &= \ 
                      - \frac 1{|\ln \eps|} \frac {t - {\eps}}{(t-
                      {\eps})^2 + \eps^2 \d_\eps^2} \qquad %
    &\FU{t \in ({\eps}, {\eps} + {\d_\eps} \sqrt{1-\eps^2})}.
                                                      \label{eq-dereta} 
  \end{align}
  By the homogeneity of the integrals in (i) and (ii), we can replace
    $\d_\eps$ by $1$ in the following estimates.  Hence, estimate (ii)
  follows by the calculation
  \begin{align}
    \int_\R \eta_\eps^2 \ d t \ %
    &\leq \ 2 {\eps}+\frac {2}{|\ln \eps|^2} \int_0^1  \Big|\ln \frac 1{t}\Big|^2 \ dt \  
      \leq \ \frac{C}{|\ln \eps|^2},\\ %
    {\eps} \int_\R \big|\eta_\eps' \big|^2 \ d t  \ %
    &\leq  \ \frac {2{\eps}}{|\ln \eps|^2} \int_0^1 \frac {t^2}{(t^2 + \eps^2)^2} \ dt \ %
      \leq \ \frac{C}{|\ln \eps|^2}. %
  \end{align}
  In order to show (i), we use the formula
  \begin{align} \label{h12-ext} %
    \int_\R |(\frac d{dt})^{\frac 12} \eta_\eps|^2 \ dt \ %
    = \ \inf \Big \{ \int_{\R^2_+} |\nabla \OL{\eta_\eps}|^2 \ dx \ : \
    \OL{\eta_\eps} \in H^1(\R^2_+) \text{ with }
    \OL{\eta_\eps}(\cdot,0) = \eta_\eps \Big \}. %
  \end{align}
  Choosing the radially symmetric extension in \eqref{h12-ext}, (i) then follows
  from
  \begin{align}
    \int_\R |(\frac d{dt})^{\frac 12} \eta_\eps|^2 \ dt \ %
    &\leq \ \frac 12\cdot 2\pi\int_{{\eps}}^{1}|\eta'_\eps(t)|^2 t \ dt \ %
    \upref{eq-dereta}{=} \ \frac {\pi}{|\ln \eps|^2}\int_0^{1-{\eps}} \frac{t^2 (t+{\eps})}{(t^2 + \eps^2)^2} \  dt   \\%
    &\leq \ \frac {\pi}{|\ln \eps|^2} \int_0^{\frac 1\eps} \frac{t^3}{(t^2 + 1)^2}  \ dt %
      + \frac {\pi {\eps}}{\eps |\ln \eps|^2} \int_0^{\frac 1\eps} \frac{t^2}{(t^2 + 1)^2} \ dt,
  \end{align}
  noting that the first integral can be estimated by $|\ln \eps| + C$ and
    the second integral is estimated by a constant.
\end{proof}
To localize the energy we will use frequently the family of cut-off functions as follows:
\begin{definition}[Cut-off function $\chi_{\tau, \rho}$]
\label{def-cut-off}
For $\rho\in (0,1)$ and $\tau\in (0,\frac{\rho}{4})$, let $\chi_{\tau,\rho}\in C^\infty_c(B_{\rho-\tau})$,
    be a family of cut-off functions
    with
\begin{align} \label{def-chitau} %
  \begin{aligned}
      \chi_{\tau,\rho}=1 \text{ in } B_{\rho-2\tau}, \quad \chi_{\tau,\rho}=0 %
  &\text{  outside } B_{\rho-\tau}, \quad
  \chi_{\tau,\rho}\leq 1, \quad |\nabla \chi_{\tau,\rho}|\leq 2\tau^{-1} %
  \text{ in } B_\rho.
  \end{aligned}
\end{align}
\end{definition}

\subsection{Estimate for the leading order terms} \label{sus-self-inter}

In this section we give a lower bound for the self-interaction term $N_\eps$,
localized in $B_{\rho}$, as sketched in Step 3 of the proof in Section
\ref{sus-strategy}. As stated in Section \ref{sus-strategy}, the proof of the
lower bound for $N_\eps[\chi_{\tau,\rho}]$ is based on the following
\emph{duality estimate:}
\begin{align} \label{duality-local} %
  \left|\langle \chi_{\tau, \rho}\sig_\eps, \chi_{\tau, \rho} \Phi_{\eps,\rho}\rangle_{L^2}\right|\ %
  &\leq \ \left\||\nabla|^{-\frac 12}(\chi_{\tau, \rho}\sig_\eps)\right\|_{L^2}\left\|
  |\nabla|^{\frac 12} (\chi_{\tau, \rho} \Phi_{\eps,\rho}) \right\|_{L^2} \\
  &\upref{def-Neps}= \
    N_\eps[\chi_{\tau,\rho}]^{\frac 12} \Big(\frac{2|\ln\eps|}{\pi \lam}\Big)^{\frac 12}   \ %
  \left\| |\nabla|^{\frac 12} (\chi_{\tau, \rho} \Phi_{\eps,\rho}) \right\|_{L^2},
\end{align}
where the test function $\Phi_{\eps,\rho}$ and the cut-off function
  $\chi_{\tau,\rho}$ are given in Definition \ref{def-psieps} and Definition
  \ref{def-cut-off}, respectively.  In view of \eqref{duality-local}, to find a
lower bound for $N_\eps[\chi_{\tau,\rho}]$ it suffices to estimate
$|\langle \chi_{\tau, \rho}^2\sig_\eps, \Phi_{\eps,\rho} \rangle_{L^2}|$ from
below and $\| |\nabla|^{\frac 12} (\chi_{\tau, \rho} \Phi_{\eps,\rho})\|_{L^2}$
from above as stated in Section \ref{sus-strategy}. These estimates will be
given in the following two propositions. The next proposition is mainly
concerned with the upper bound for $\dot{H}^{\frac 12}$ norm of the test
function $\Phi_{\eps,\rho}$ with the sharp constant in the leading term, cf. (i)
below. Note that this also gives an upper bound on the capacity of the
separating curve $\gamma_{\eps,\rho}$ in $\R^3$. We also collect some further
bounds for $\Phi_{\eps,\rho}$, which will be used later to estimate terms which
are not leading order:
\begin{proposition}[Upper bound for duality estimate] \label{prp-RHS} %
  \hks{Let $\gam_{\eps_j,\rho}$ be a sequence of separating curves given in
    Lemma \ref{lem-separate2}. Let $\Phi_{\eps_j,\rho}$ and $\chi_{\tau,\rho}$
    be the test function and cut-off function given in Definition
    \ref{def-psieps} and Definition \ref{def-cut-off}, respectively. Assume that
    $\tau\geq 8\delta_{\eps_j}$ with $\d_\eps$ given in \eqref{def-deps}.}  Then
  for some universal constant $C>0$, we have
  \begin{enumerate}
  \item
    $\displaystyle \int_{\R^2} \big| |\nabla|^{\frac 12} (\chi_{\tau, \rho}
    \Phi_{\eps_j,\rho} )\big | ^2 \ dx \ 
    \leq \ \frac{\pi}{|\ln {\eps_j}|} \Big( 1+
    \frac{C}{|\ln{\eps_j}|^{\frac 12}\tau}\Big)\HH^1(\gamma_{\eps_j,\rho}),$
  \item
    $\displaystyle \int_{B_{\rho}} \chi_{\tau,\rho}^2 \Phi_{\eps_j,\rho}^2 \ dx \ %
    \leq \ \frac{C}{|\ln \eps_j|^{\frac 94}} \HH^1(\gam_{\eps_j,\rho})$,
  \item
   $\displaystyle \int_{B_{\rho}} \chi_{\tau,\rho}^2 \Phi_{\eps_j,\rho} \ dx \ 
    \leq \ \frac{C}{|\ln \eps_j|^{\frac 54}} \HH^1(\gam_{\eps_j,\rho})$,
  \item $\displaystyle \int_{B_{\rho}}\chi_{\tau,\rho}^2 |\nabla \Phi_{\eps_j,\rho}|^2 \ dx \ %
    \leq \ \frac{C} {\eps_j |\ln \eps_j|^{\frac 74}} \HH^1(\gam_{\eps_j,\rho})$,
  \item \label{it-Phi-sup} %
    \ed{Let $G_{\eps_j,\rho}^{(k)}$ be the same sets as in Lemma
      \ref{lem-separate2} \ref{sep2-4b}. Then} we have%
  \begin{align} \label{etas-1} %
    \NIL{\nabla \Phi_{\eps_j}}{G_{\eps_j,\rho}^{(k)}} \ %
    \leq \ \frac{C}{\dist(\p G_{\eps_j,\rho}^{(k)},\gamma_{\eps_j,\rho}) |\ln\eps_j|^{\frac 12}}.
  \end{align}
  \end{enumerate}
\end{proposition}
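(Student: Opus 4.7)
The unifying tool is the coarea formula for the Lipschitz function $d_{\gamma_{\eps_j,\rho}}$ (with $|\nabla d_{\gamma_{\eps_j,\rho}}|=1$ a.e.\ by Rademacher) combined with the level-set estimate of Lemma~\ref{lem-separate2}\ref{sep2-0b}. Since $\Phi_{\eps_j,\rho}$ is supported where $d_{\gamma_{\eps_j,\rho}}<2\delta_{\eps_j}$ and $\chi_{\tau,\rho}$ is supported in $B_{\rho-\tau}\subset B_{\rho-8\delta_{\eps_j}}$ (by $\tau\geq 8\delta_{\eps_j}$), for any Borel $F\geq 0$ one has
\begin{equation}
\int_{\R^2} F(d_{\gamma_{\eps_j,\rho}})\,\chi_{\tau,\rho}^2\,dx \ \leq \ 2\,\HH^1(\gamma_{\eps_j,\rho})\int_0^{2\delta_{\eps_j}}F(t)\,dt.
\end{equation}
This immediately yields parts (ii)--(iv) upon taking $F=\eta_{\eps_j}^2,\,\eta_{\eps_j},\,|\eta_{\eps_j}'|^2$ (for (iv) using $|\nabla\Phi_{\eps_j,\rho}|=|\eta_{\eps_j}'(d_{\gamma_{\eps_j,\rho}})|$ a.e.) and invoking Lemma~\ref{lem-test1d}(ii) for the $L^2$ bounds on $\eta_{\eps_j}$ and $\eta_{\eps_j}'$, together with a direct computation on the logarithmic profile yielding $\int_0^{2\delta_{\eps_j}}\eta_{\eps_j}\,dt\leq C\delta_{\eps_j}/|\ln\eps_j|$ needed in (iii).

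The main work is (i). I would use the harmonic-extension characterization $\|f\|_{\dot H^{\frac 12}(\R^2)}^2 \leq \int_{\R^3_+}|\nabla \bar f|^2\,dx\,dx_3$ valid for any $\bar f\in H^1(\R^3_+)$ with trace $f$, applied to the radially symmetric extension
\begin{equation}
\bar{\Phi}(x,x_3)\ :=\ \chi_{\tau,\rho}(x)\,\eta_{\eps_j}\!\left(\sqrt{d_{\gamma_{\eps_j,\rho}}(x)^2 + x_3^2}\right).
\end{equation}
Setting $r:=\sqrt{d_{\gamma_{\eps_j,\rho}}^2+x_3^2}$ and using $|\nabla d_{\gamma_{\eps_j,\rho}}|=1$ to get $|\nabla_{x,x_3}r|=1$ a.e., the product rule combined with Young's inequality at parameter $\kappa=|\ln\eps_j|^{-\frac 12}$ produces the pointwise bound $|\nabla\bar\Phi|^2\leq (1+\kappa)\chi_{\tau,\rho}^2|\eta_{\eps_j}'(r)|^2 + (1+\kappa^{-1})|\nabla\chi_{\tau,\rho}|^2\eta_{\eps_j}^2$. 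For the leading term I would apply coarea in $d_{\gamma_{\eps_j,\rho}}$ followed by polar coordinates $(r,\phi)$ on the quarter-plane $\{d_{\gamma_{\eps_j,\rho}},x_3>0\}$, reducing the integral to $\pi\HH^1(\gamma_{\eps_j,\rho})\int_0^\infty r|\eta_{\eps_j}'(r)|^2\,dr$, and then invoke Lemma~\ref{lem-test1d}(i) to identify the sharp constant $\pi/|\ln\eps_j|$. The remainder is estimated crudely using $|\nabla\chi_{\tau,\rho}|\leq 2/\tau$, $|B_\rho\setminus B_{\rho-\tau}|\lesssim\rho\tau$, and the monotonicity $\eta_{\eps_j}(\sqrt{d^2+x_3^2})\leq\eta_{\eps_j}(x_3)$ (since $\eta_{\eps_j}$ is nonincreasing on $(\eps_j,\infty)$ and $\leq 1$); since $\HH^1(\gamma_{\eps_j,\rho})$ is bounded below uniformly in $\eps_j$ for small $\eps_j$ by Lemma~\ref{lem-separate2}\ref{sep2-3} and $\widehat x\in\SS_m$, this error is absorbed into $C\HH^1(\gamma_{\eps_j,\rho})/(\tau|\ln\eps_j|^{\frac 32})$, matching the stated factor $C/(\tau|\ln\eps_j|^{\frac 12})$ on the leading order term.

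For (v), the estimate is purely pointwise: from \eqref{eq-dereta} one has $|\eta_{\eps_j}'(t)|\leq(|\ln\eps_j|\,|t-\eps_j|)^{-1}$ on $\{|t|>\eps_j\}$ and $\|\eta_{\eps_j}'\|_{L^\infty}\leq C/(\eps_j|\ln\eps_j|^{\frac 34})$. Since $d_{\gamma_{\eps_j,\rho}}$ has no strict interior local minima off its zero set, one has $d_{\gamma_{\eps_j,\rho}}(x)\geq\dist(\partial G_{\eps_j,\rho}^{(k)},\gamma_{\eps_j,\rho})$ on $G_{\eps_j,\rho}^{(k)}$, and a short case analysis in the ratio $\dist(\partial G_{\eps_j,\rho}^{(k)},\gamma_{\eps_j,\rho})/\eps_j$ (the two bounds cross over near this scale) produces \eqref{etas-1}. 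The main obstacle is item (i): the constant $\pi/|\ln\eps_j|$ must be sharp in order for the duality bound \eqref{duality-local} to generate the correct coefficient $2\lambda/\alpha$ in \eqref{N-need}, and hence to recover the limit energy density $f$ of \eqref{def-f}; the tuning of the Young parameter $\kappa=|\ln\eps_j|^{-\frac 12}$ is precisely what balances the main and remainder contributions to the claimed lower-order factor.
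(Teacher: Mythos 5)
Your proposal is correct and follows essentially the same route as the paper: the same radial extension $\chi_{\tau,\rho}(x)\,\eta_{\eps_j}(\sqrt{d_{\gamma_{\eps_j,\rho}}^2+x_3^2})$ (which is exactly the paper's $\eta_{\eps_j}(\bar d_{\gamma_{\eps_j,\rho}})$), the coarea formula combined with the level-set estimate of Lemma \ref{lem-separate2}\ref{sep2-0b} to produce the sharp factor $\pi\,\HH^1(\gamma_{\eps_j,\rho})\int r|\eta_{\eps_j}'|^2\,dr$ and hence $\pi/|\ln\eps_j|$ via Lemma \ref{lem-test1d}(i), the same coarea argument for (ii)--(iv), and the same pointwise bound on $\eta_{\eps_j}'$ plus the geometric fact $d_{\gamma_{\eps_j,\rho}}\geq \dist(\p G_{\eps_j,\rho}^{(k)},\gamma_{\eps_j,\rho})$ on $G_{\eps_j,\rho}^{(k)}$ with the crossover at scale $\eps_j$ for (v). The one genuine deviation is the cutoff-gradient remainder in (i): the paper estimates $\int|\nabla\chi_{\tau,\rho}|^2\eta_{\eps_j}^2$ by the same coarea/level-set machinery (its bound $\HH^2(\Gamma_{t,\rho})\leq \pi t\,\HH^1(\gamma_{\eps_j,\rho})$), so the error automatically carries the factor $\HH^1(\gamma_{\eps_j,\rho})$ and the constant is universal for every $j$; your crude bound (annulus area times $\int\eta_{\eps_j}^2$) gives $C\rho/(\tau|\ln\eps_j|^{7/4})$ without that factor, and you recover it only by invoking the lower bound $\liminf_j\HH^1(\gamma_{\eps_j,\rho})\geq\HH^1(\SS_m\cap B_\rho)>0$, so your inequality holds with a universal constant only for $\eps_j$ small enough (depending on $\rho$ and $\HH^1(\SS_m\cap B_\rho)$). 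Since the proposition is only used inside $\liminf$'s this is harmless, but the paper's treatment is what makes the stated universal-constant bound valid for the whole sequence, and it avoids any appeal to a lower bound on $\HH^1(\gamma_{\eps_j,\rho})$.
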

\begin{proof}
  For the simplification of the notation in the proof we write $\eps:=\eps_j$.

  \textit{(i): } We estimate the $\dot{H}^{\frac 12}$-norm, using the
  characterization
  \begin{align} \label{estext0} %
    \int_{\R^2} \big| |\nabla |^{\frac 12} u \big|^2 \ dx \ = \ \inf_{\OL
    u(x,0)=u(x)}\int_{\R^3_+}|\OL \nabla \OL u|^2 \ d \OL x,
    \end{align}
    where the infimum is taken over all $H^1$-extensions of $u$ to
    $\R^3_+:= \R^2 \times \R_+$ and $\OL x := (x,x_3) \in \R^3_+$ (see
    e.g. \cite[p.26]{LM72}).  Let $\bar d_{\gam_{\eps,\rho}} : \R^3_+\to \R_+$
    denote the distance to $\gam_{\eps,\rho}$ in $\R^3_+$.  We choose the
    extension $\OL \psi_\eps \in H^1(\R^3_+)$ of
    $\psi_\eps :=\chi_{\tau,\rho}\Phi_{\eps,\rho} = \chi_{\tau, \rho} (\eta_\eps \circ d_{\gam_{\eps,\rho}})$ by taking
  \begin{align}
    \OL \psi_\eps(\OL x) \ := \ \chi_{\tau, \rho}(x)\eta_\eps(\bar d_{\gam_{\eps,\rho}}(\OL x)) \qquad\qquad %
    \text{for $\OL x = (x,x_3)\in \R^3_+$},
  \end{align}
  where $\eta_\eps$ is defined in \eqref{def-eta}. By \eqref{estext0} this yields
  $\NNN{\psi_\eps}{\dot H^{\frac 12}(\R^2)} \leq \NTL{\OL \nabla \, \OL
    \psi_\eps}{\R^3_+}$.  We calculate
  \begin{align}\label{eq-ext}
    \int_{\R^3_+}|\OL \nabla \, \OL \psi_\eps|^2 \ d \OL x \ %
    = \ \int_{\R^3_+}\left|\chi_{\tau, \rho} \OL \nabla (\eta_\eps\circ \bar d_{\gam_{\eps,\rho}}) + (\eta_\eps\circ \bar d_{\gam_{\eps,\rho}}) \nabla \chi_{\tau, \rho}\right|^2 \ d \OL x.
  \end{align}
  For the estimate we first consider the term
  \begin{align}
    I_1 \ %
    &:= \ \int_{\R^3_+}\left|\chi_{\tau, \rho}\OL \nabla (\eta_\eps\circ \bar d_{\gam_{\eps,\rho}})\right|^2 \ d \OL x \ %
      = \ \int_{\R^3_+} \chi_{\tau, \rho}(x)^2 |\eta_\eps'(\bar d_{\gam_{\eps,\rho}}(\OL x))|^2 |\OL \nabla \, \OL d_{\gam_{\eps,\rho}}(\OL x)|^2  \ d \OL x.
  \end{align}
  Since $|\OL \nabla \bar d_{\gam_{\eps,\rho}}|=1$ a.e., by the coarea formula,
  since $\spt \eta_\eps' \SUS (-2\d_\eps,2\d_\eps)$ we then have
  \begin{align} \label{def-I1-here} %
    I_1 \ %
    &= \ \int_{0}^{2\d_\eps} |\eta'_\eps(t)|^2\Big(\int_{\{\bar d_{\gam_{\eps,\rho}} = t\}} \chi_{\tau, \rho}^2(x) \ d\HH^2(\OL x) \Big) \ dt \ %
    \leq \ \int_{0}^{2\d_\eps} |\eta'_\eps(t)|^2 \ \HH^2(\Gam_{t,\rho}) \ dt,
  \end{align}  
  where
  $\Gamma_{t,\rho}:=\{\bar d_{\gam_{\eps,\rho}} = t\}\cap (B_{\rho-\tau}\times
  \R_+)$. To estimate $\HH^2(\Gam_{t,\rho})$, we introduce
    $g : \R_+^3 \to \R$, $g(\bar x):=x_3$ and consider the slices
  $\Gam_{t,\rho} \cap g^{-1}(s)$ for ${s} \in (0,t)$. We note that
  $\OL x = (x, {s}) \in \Gam_{t,\rho}$ if and only if
  $d_{\gam_{\eps,\rho}}(x)^2 + {s}^2 = t^2$, i.e.
  $x \in d_{\gam_{\eps,\rho}}^{-1}(\sqrt{t^2 - {{s}}^2})$. Since
  $B_{\rho - \tau} \SUS B_{\rho - 8\d_\eps}$ due to $\tau\geq 8\delta_\eps$, this implies
  $\HH^1(\Gam_{t,\rho} \cap g^{-1}(s)) \leq
  \HH^1(d_{\gam_{\eps,\rho}}^{-1}(\sqrt{t^2 - {s}^2}) \cap B_{\rho-8
    \d_\eps})$ for ${s} \in [0,t]$ and $0$ else. An application of Lemma \ref{lem-separate2}
  \ref{sep2-0a} then yields
    \begin{align} \label{H1-est-loc}
      \HH^1(\Gam_{t,\rho} \cap g^{-1}(s) ) \ %
      \leq \ 2\HH^1(\gam_{\eps,\rho}) \qquad \text{for $t \in (0,2\d_\eps)$, ${s} \in (0,t)$.}
    \end{align}
    By the coarea formula, applied to the level set of $g$, we hence get
    \begin{align}  \label{HH2-graph} %
      \HH^2(\Gam_{t,\rho}) \ %
      &= \ \int_0^t \HH^1(\Gam_{t,\rho} \cap g^{-1}(s)) \frac{1}{|\nabla_{\Gamma_{t,\rho}} g|} \ d {s} \\ %
      &= \ \int_0^t \HH^1(\Gam_{t,\rho} \cap g^{-1}(s)) \frac{t}{\sqrt{t^2 - {s}^2}} \ d {s} \ %
      \lupref{H1-est-loc}\leq \ \pi t \HH^1(\gam_{\eps,\rho}),
    \end{align}
    where $\nabla_{\Gamma_{t,\rho}} g =\frac{\sqrt{t^2 - {s}^2}}{t}$ is the
    projection of the full gradient $\nabla g$ onto the tangent space of
    $\Gamma_{t,\rho}$.  Inserting estimate \eqref{HH2-graph} into
    \eqref{def-I1-here} and by an application of Lemma \ref{lem-test1d}(i) we
    arrive at
  \begin{align} \label{I1-eest} %
    I_1 \ &\lupref{HH2-graph}\leq \ \pi \HH^1(\gam_{\eps,\rho})\int_{0}^{2\d_\eps} t |\eta'_\eps(t)|^2 \ dt  \ %
            \leq \ \Big(\frac{\pi}{|\ln \eps|}+\frac{C}{|\ln \eps|^{\frac{3}{2}}}\Big) \HH^1(\gam_{\eps,\rho}).
  \end{align}
  To continue with the estimate of \eqref{eq-ext}, we similarly apply
  the coarea formula to get
  \begin{equation}
    I_2  \ := \ \int_{\R^3_+}(\eta_\eps\circ \bar d_{\gam_{\eps,\rho}})^2|\nabla \chi_{\tau, \rho}|^2 \ \ d \OL x \ 
    = \ \int_{0}^{2\d_\eps}|\eta_\eps(t)|^2 \Big(\int_{\{\bar d_{\gam_{\eps,\rho}}=t\}} |\nabla \chi_{\tau, \rho}|^2 d\HH^2\Big) \ dt.
  \end{equation}
  Since $|\nabla\chi_{\tau, \rho}| \leq 2\tau^{-1}$, by \eqref{HH2-graph} and
  by Lemma
  \ref{lem-test1d}(ii), as well as $8\delta_\eps \leq \tau$ and since
    $\delta_\eps = |\ln \eps|^{-\frac 14}$, we further get the bound
  \begin{equation} \label{I2-eest} %
    I_2 \ %
    \leq \  \ C \HH^1(\gamma_{\eps,\rho}) \frac {1}{\tau^2} \int_{0}^{2\d_\eps} t|\eta_\eps(t)|^2 \ dt \ %
    \leq \ \frac{C \d_\eps^2 \HH^1(\gamma_{\eps,\rho})}{|\ln\eps|^{2}\tau^2} \ %
    \leq \ \frac{C\HH^1(\gamma_{\eps,\rho})}{|\ln\eps|^{\frac{9}{4}}\tau}.
  \end{equation}
  By Cauchy-Schwarz and the estimates \eqref{I1-eest} and \eqref{I2-eest}, we
    also have
    \begin{align} \label{mix-eest} %
      \Big|\int_{\R^3_+}\chi_{\tau, \rho}(\eta_\eps \circ \bar
      d_{\gam_{\eps,\rho}})\OL \nabla(\eta_\eps\circ \bar
      d_{\gam_{\eps,\rho}})\cdot\nabla\chi_{\tau, \rho} \ d\OL x \Big| \ %
      \leq \ \frac{C\HH^1(\gamma_{\eps,\rho})}{|\ln\eps|^{\frac{7}{4}}\tau}.
  \end{align}
  Estimates \eqref{I1-eest}, \eqref{I2-eest} and \eqref{mix-eest} together
  yield the desired upper bound.

  \medskip

  \textit{(ii)--(iv):} We only give the estimate for (ii), since the estimates
  for (iii) and (iv) follow similarly. By the coarea formula, since
  $|\nabla d_{\gam_{\eps,\rho}}| = 1$ a.e., since $\tau \geq 8 \d_\eps$,
  $\spt \Phi_{\eps,\rho} \Subset d_{\gam_{\eps,\rho}}^{-1}([0,2\delta_\eps))$
  and by Lemma \ref{lem-separate2}\ref{sep2-0a}, we have
  \begin{align}
    \int_{B_{\rho}} \chi_{\tau,\rho}^2 |\Phi_{\eps,\rho}|^2 \ dx \ %
    &\leq \ \int_{0}^{2\d_\eps}\eta_\eps^2(t) \ \HH^1\big(d_{\gam_{\eps,\rho}}^{-1}(t)\cap B_{\rho-8\d_\eps}\big) \ dt \\ %
    &\leq \ 2\HH^1(\gam_{\eps,\rho})\int_{\R} \eta_\eps^2(t) \  \ dt \ %
    \leq  \frac{C}{|\ln \eps|^{\frac 94}} \HH^1(\gam_{\eps,\rho}),
  \end{align}
  where we used  Lemma \ref{lem-test1d}(ii) for the last estimate.

  \medskip
  
  \textit{\ref{it-Phi-sup}:} %
  We first note that by \eqref{def-Phieps} we have
  $|\nabla \Phi_{\eps\hks{,\rho}}(x)| \leq |\eta'_\eps
  (d_{\gam_{\eps,\rho}}(x))|$.  From the explicit expression for $\eta'_\eps$ in
  \eqref{eq-dereta} \hks{and the fact that
    $d_{\gam_{\eps,\rho}}(x)\leq d_{\eps,\rho}^{(k)} :=\dist(\p
      G_{\eps,\rho}^{(k)},\gamma_{\eps,\rho})$ for $x\in G_{\eps,\rho}^{(k)}$,
    we then get
  \begin{align} \label{thises} %
    |\nabla \Phi_{\eps\hks{,\rho}}(x)| \ %
    \lupref{def-Phieps}\leq \ |\eta'_\eps(d_{\gamma_{\eps,\rho}}(x))| \ %
    \leq \ 
    \hks{\frac 1{|\ln\eps|} \frac 1{\max\{d_{\eps,\rho}^{(k)}-{\eps},
    \eps \delta_\eps\}}} \qquad %
    \text{for $x\in G_{\eps,\rho}^{(k)}$}.
  \end{align}
  To get (v), it is hence enough } to show that \hks{there is a universal constant
    $C>0$ such that}
  \begin{align} \label{v-toshow}
      d \ \leq \ \hks{C}|\ln \eps|^{\frac 12} \max\{d -
    {\eps},\eps\hks{\delta_\eps}\} \qquad %
    \text{ for all $d > 0$ and all
  $\eps \in (0,\frac 14)$.} 
  \end{align}
  \hks{Indeed, when $d-{\eps} \geq \eps\delta_\eps$, it follows from
    $d\mapsto \frac{d}{d-{\eps}}$ is monotone decreasing that
    $\frac{d}{d-{\eps}}\leq
    \frac{{\eps}+\eps\delta_\eps}{\eps\delta_\eps}=|\ln\eps|^{\frac
      14}+1\leq 2|\ln\eps|^{\frac 12}$; when $d-{\eps}<\eps\delta_\eps$, one
    has
    $d\leq {\eps} + \eps\delta_\eps\leq 2|\ln\eps|^{\frac 12}
    \eps\delta_\eps$. Together, this yields \eqref{v-toshow}.}
\end{proof}
We next give the estimate for the term on the left hand side of the duality
estimate \eqref{duality-local}. We show that our test function asymptotically
captures the total charge of the transition layer via an application of the
divergence theorem: 
\begin{proposition}[Lower bound for duality estimate] \label{prp-LHS} %
  \hks{Let $\gam_{\eps_j,\rho}$ be a sequence of separating curves given in
    Lemma \ref{lem-separate2}. Let $\Phi_{\eps_j,\rho}$ and $\chi_{\tau,\rho}$
    be the test function and cut-off function given in Definition
    \ref{def-psieps} and Definition \ref{def-cut-off}, respectively. Assume that
    $\tau\geq 8\delta_{\eps_j}$ with $\d_\eps$ given in \eqref{def-deps}.}
 Then there is
  $C = C(\NNN{\nabla \cdot M}{L^\infty}) > 0$ such that for $\eps_j$ sufficiently
  small, we have
  \begin{align} \label{est-LHS} %
    \Big|\int_{\R^2} \chi_{\tau, \rho}^2\Phi_{\eps_j,\rho} \sig_{\eps_j}  \ dx %
    +2\int_{\gam_{\eps_j,\rho} }\chi_{\tau, \rho}^2 (n_{\eps_j} \cdot e_1)  \ d\HH^1 \Big| \ %
    \leq \ \frac{C \ed{(K+1)}}{|\ln {\eps_j}|^{\frac 12}} \Big(1 + \frac{1}{\tau|\ln\eps_j|^{\frac 12}} \Big),
  \end{align}
  where $n_{\eps_j}$ is the unit outer normal of
  $\Ome^{(0)}_{\eps_j, \rho}$ along $\gamma_{\eps_j,\rho}$ and $K$ is
  given in \eqref{meps-bound}.
\end{proposition}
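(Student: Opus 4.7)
The plan is to establish the identity
\begin{align*}
  &\int_{\R^2} \chi_{\tau,\rho}^2 \Phi_{\eps_j,\rho} \sig_{\eps_j} \ dx + 2 \int_{\gam_{\eps_j,\rho}} \chi_{\tau,\rho}^2 (n_{\eps_j} \cdot e_1) \ d\HH^1 \\
  &\qquad = \ -\int_{\R^2} \chi_{\tau,\rho}^2 \Phi_{\eps_j,\rho} \, (\nabla \cdot M) \ dx \ - \ \int_{\R^2} \nabla(\chi_{\tau,\rho}^2 \Phi_{\eps_j,\rho}) \cdot (m_{\eps_j} - \OL m_{\eps_j}) \ dx,
\end{align*}
where $\OL m_{\eps_j} := s_{\eps_j} e_1$ with $s_{\eps_j} := 2\chi_{\Omega_{\eps_j,\rho}^{(0)}} - 1 \in \{\pm 1\}$, and then to bound the two right-hand terms. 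The identity follows by integrating by parts (noting $m_{\eps_j}$ is smooth) and using the distributional derivative $\nabla \cdot \OL m_{\eps_j} = -2(n_{\eps_j} \cdot e_1)\HH^1 \lfloor \gam_{\eps_j,\rho}$ together with $\Phi_{\eps_j,\rho} \equiv 1$ on $\gam_{\eps_j,\rho}$.

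The first right-hand term is immediately bounded by $\|\nabla \cdot M\|_{L^\infty} \cdot \|\chi_{\tau,\rho}^2 \Phi_{\eps_j,\rho}\|_{L^1}$, which via Proposition \ref{prp-RHS}(iii) is of order $K/|\ln\eps_j|^{5/4}$. For the second, I split $\nabla(\chi_{\tau,\rho}^2 \Phi_{\eps_j,\rho}) = \chi_{\tau,\rho}^2 \nabla \Phi_{\eps_j,\rho} + \Phi_{\eps_j,\rho} \nabla \chi_{\tau,\rho}^2$. The $\Phi_{\eps_j,\rho} \nabla \chi_{\tau,\rho}^2$ piece, using $|m_{\eps_j} - \OL m_{\eps_j}| \leq 2$, $|\nabla \chi_{\tau,\rho}^2| \leq 4/\tau$, and Proposition \ref{prp-RHS}(iii), contributes the $\frac{1}{\tau|\ln\eps_j|^{1/2}}$-factor in \eqref{est-LHS}. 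For the $\chi_{\tau,\rho}^2 \nabla \Phi_{\eps_j,\rho}$ piece, writing $m_{\eps_j} - \OL m_{\eps_j} = (u_{\eps_j} - s_{\eps_j}) e_1 + v_{\eps_j} e_2$, the $v_{\eps_j}$-term is handled by Cauchy--Schwarz from $\|v_{\eps_j}\|_{L^2}^2 \leq \eps_j K$ and Proposition \ref{prp-RHS}(iv), giving $CK/|\ln\eps_j|^{7/8}$.

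The leading $(u_{\eps_j}-s_{\eps_j})$-term requires partitioning the support of $\chi_{\tau,\rho}$ via Lemma \ref{lem-separate2}. Outside $B_{\rho - 4\d_{\eps_j}}$ the cut-off vanishes since $\tau \geq 8\d_{\eps_j}$. On the "good" region $B_{\rho-4\d_{\eps_j}} \cap (\Omega_{\eps_j,\rho}^{(0)} \setminus \Omega_{\eps_j,\rho}^{(1)})$, Lemma \ref{lem-separate2}(ix) gives $1 + u_{\eps_j} \geq \d_{\eps_j}$, hence the algebraic identity
\begin{align*}
|u_{\eps_j} - 1| \ = \ \frac{1 - u_{\eps_j}^2}{1 + u_{\eps_j}} \ = \ \frac{v_{\eps_j}^2}{1 + u_{\eps_j}} \ \leq \ \frac{v_{\eps_j}^2}{\d_{\eps_j}}.
\end{align*}
Combining this with the pointwise sup-bound $\|\nabla \Phi_{\eps_j,\rho}\|_{L^\infty} \leq C/(|\ln\eps_j| \eps_j \d_{\eps_j})$ (read off from \eqref{eq-dereta}) and $\int v_{\eps_j}^2 \ dx \leq \eps_j K$ yields a contribution of order $K/(|\ln\eps_j|\d_{\eps_j}^2) = K/|\ln\eps_j|^{1/2}$; the complement region is treated analogously.

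The main obstacle is the contribution from $\Omega_{\eps_j,\rho}^{(1)}$, where $u_{\eps_j}$ is not controlled and only $|u_{\eps_j} - s_{\eps_j}| \leq 2$ is available, reducing matters to bounding $\int_{\Omega_{\eps_j,\rho}^{(1)} \cap B_{\rho - 4\d_{\eps_j}}} |\nabla \Phi_{\eps_j,\rho}| \ dx$. On each connected component $G_{\eps_j,\rho}^{(k)}$, Lemma \ref{lem-separate2}(viii) combined with the isoperimetric inequality yields $|G_{\eps_j,\rho}^{(k)}| \leq \pi \HH^1(\p G_{\eps_j,\rho}^{(k)}) \dist(\p G_{\eps_j,\rho}^{(k)}, \gam_{\eps_j,\rho})$, while Proposition \ref{prp-RHS}(v) gives $\|\nabla \Phi_{\eps_j,\rho}\|_{L^\infty(G_{\eps_j,\rho}^{(k)})} \leq C/(\dist(\p G_{\eps_j,\rho}^{(k)}, \gam_{\eps_j,\rho})|\ln\eps_j|^{1/2})$. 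The two distance factors cancel, and summing over $k$ with Lemma \ref{lem-separate2}(vii) gives $CK/|\ln\eps_j|^{1/2}$. Collecting all contributions yields \eqref{est-LHS}.
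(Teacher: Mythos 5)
Your proposal is correct and follows essentially the same route as the paper: integrate by parts, use the divergence structure of the piecewise-constant profile $s_{\eps_j}e_1$ (equivalently, the paper's two applications of the divergence theorem on $\Ome^{(0)}_{\eps_j,\rho}$ and its complement) together with $\Phi_{\eps_j,\rho}=1$ on $\gam_{\eps_j,\rho}$, control the $M$-term and cut-off-gradient term via the $L^1$-bounds behind Proposition \ref{prp-RHS}, use Lemma \ref{lem-separate2}\ref{sep2-dazu} and the anisotropy bound $\int v_{\eps_j}^2\leq 2K\eps_j$ on $\Ome^{(0)}_{\eps_j,\rho}\BS\Ome^{(1)}_{\eps_j,\rho}$ and its analogue, and treat $\Ome^{(1)}_{\eps_j,\rho}$ through the components $G^{(k)}_{\eps_j,\rho}$ with the isoperimetric inequality, Proposition \ref{prp-RHS}\ref{it-Phi-sup} and Lemma \ref{lem-separate2}\ref{sep2-4a}--\ref{sep2-4b}. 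The only deviations are bookkeeping ones — an $L^\infty$--$L^1$ pairing with the pointwise bound on $\eta'_{\eps_j}$ instead of the paper's Cauchy--Schwarz in the good region, and a trivially adapted version of Proposition \ref{prp-RHS}(iii) for $\int\Phi_{\eps_j,\rho}|\nabla\chi^2_{\tau,\rho}|$ — and the resulting slightly different exponents still fall within the claimed bound \eqref{est-LHS}.
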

\begin{proof}
  We write $\eps := \eps_j$  and $n_{\eps_j,1} := n_{\eps_j} \cdot e_1$.   We first note that for the estimate we
  can replace $\sig_\eps = \nabla\cdot (m_\eps - M)$ on the left hand side
    of \eqref{est-LHS} by $\nabla\cdot m_\eps$, since by Proposition
  \ref{prp-RHS}(iii) the integral involving the background magnetization $M$ is
  bounded by $CK|\ln\eps|^{-\frac{5}{4}} \NNN{\nabla \cdot M}{L^\infty}$.
  Integrating by parts yields
  \begin{align} \label{pi-rhs} %
    \int_{\R^2} \chi_{\tau, \rho}^2\Phi_{\eps,\rho}\nabla\cdot m_\eps \ dx \ %
    &= \ -\int_{\R^2} \nabla(\chi_{\tau, \rho}^2\Phi_{\eps,\rho})\cdot m_\eps \ dx. %
  \end{align}
  We use the decomposition
    $\R^2 = \Ome_{\eps,\rho}^{(0)} \cup (\Ome_{\eps,\rho}^{(0)})^c$ and claim
  that
  \begin{align}
    \Big|\int_{\Ome_{\eps,\rho}^{(0)}}\nabla(\chi_{\tau, \rho}^2\Phi_{\eps,\rho})\cdot m_\eps \ dx %
    -\int_{\gam_{\eps,\rho}} \chi_{\tau, \rho}^2 n_{\eps,1} \ d\HH^1\Big| \ %
      \leq \ \frac{C \ed{(K+1)}}{|\ln {\eps}|^{\frac 12}} \Big(1 + \frac{1}{\tau|\ln\eps|^{\frac 12}} \Big) \label{eq-div-E}
  \end{align}
  and that the same estimate holds with $\Ome_{\eps,\rho}^{(0)}$ replaced
    by its complement. In the following, we give the argument for
    \eqref{eq-div-E}, noting that the estimate  for the integral over
  $(\Ome_{\eps,\rho}^{(0)})^c$ can be shown with an analogous argument by
  replacing $e_1$ with $-e_1$ in the following proof.

  \medskip
  
  Since $\Phi_{\eps,\rho}=1$ on
  $\gam_{\eps,\rho} = \p \Ome_{\eps,\rho}^{(0)} \cap B_\rho$ and by the
  divergence theorem we have
  \begin{equation}
    \int_{\Ome_{\eps,\rho}^{(0)}}\nabla(\chi_{\tau, \rho}^2\Phi_{\eps,\rho})\cdot e_1 \ dx \  %
    =  \ \int_{\Ome_{\eps,\rho}^{(0)}}\nabla \cdot (\chi_{\tau, \rho}^2\Phi_{\eps,\rho} e_1) \ dx \   %
    = \ \int_{\gam_{\eps,\rho}} \chi_{\tau, \rho}^2 n_{\eps,1} \ d\HH^1.
  \end{equation}
  To prove \eqref{eq-div-E}, it hence remains to show that
  \begin{align} \label{III-est} %
    \Big|\int_{\Ome_{\eps,\rho}^{(0)}} \nabla (\chi_{\tau, \rho}^2\Phi_{\eps,\rho})
    \cdot (m_\eps-e_1) \ dx \Big| \ %
    \leq \ \frac{C \ed{(K+1)}}{|\ln {\eps}|^{\frac 12}} \Big(1 +
    \frac{1}{\tau|\ln\eps|^{\frac 12}} \Big).
  \end{align}
  To show \eqref{III-est}, we further decompose the domain of integration
    and write
  \begin{align}
    \hspace{6ex} & \hspace{-6ex} %
                   \Big|\int_{\Ome_{\eps,\rho}^{(0)}} \nabla (\chi_{\tau, \rho}^2\Phi_{\eps,\rho})\cdot (m_\eps-e_1) \ dx \Big| \ \\
                 &\leq \ \int_{\Ome_{\eps,\rho}^{(0)}\BS \Ome_{\eps,\rho}^{(1)}}\left|\nabla (\chi_{\tau, \rho}^2\Phi_{\eps,\rho})\cdot (m_\eps-e_1)\right| \ dx %
                   + 2 \int_{\Ome_{\eps,\rho}^{(0)} \cap \Ome_{\eps,\rho}^{(1)}}\left|\nabla (\chi_{\tau, \rho}^2\Phi_{\eps,\rho})\right| \ dx \\
    &=: \ R_1 + R_2.
  \end{align}
  We conclude the proof by estimating $R_1$ and $R_2$ separately.

  \medskip

  \textit{Estimate for $R_1$:} By Lemma
  \ref{lem-separate2}\ref{sep2-dazu}, we have $u_\eps + 1> \d_\eps$ in
  $\Ome_{\eps,\rho}^{(0)} \BS \Ome_{\eps,\rho}^{(1)}$. Together with
  $|m_\eps|^2= u_\eps^2 + v_\eps^2 = 1$, we hence get
  \begin{align} \label{R1-0} %
    \int_{\Ome_{\eps,\rho}^{(0)} \BS
    \Ome_{\eps,\rho}^{(1)}} |m_\eps-e_1|^2 \ dx \ 
    &= \ \int_{\Ome_{\eps,\rho}^{(0)} \BS
      \Ome_{\eps,\rho}^{(1)}} \frac{2v_\eps^2}{1+u_\eps} \ dx \ %
      \leq \ \frac{2}{\d_\eps} \int_{\Ome_{\eps,\rho}^{(0)} \BS \Ome_{\eps,\rho}^{(1)}} v_\eps^2 \ dx \\
    &\upref{EE}\leq \ \frac{4 \eps}{\d_\eps} E_\eps[m_\eps] \ \upref{def-deps}\leq \ 4K \eps |\ln
      \eps|^{\frac 14}.
  \end{align}
  On the other hand, since $|\chi_{\tau,\rho}| \leq 1$,
  $|\nabla\chi_{\tau, \rho}|\leq 2\tau^{-1}$, by an application of Proposition
  \ref{prp-RHS}(ii)--(iii) and since $\HH^1(\gamma_{\eps,\rho})\leq K$, we obtain
  \begin{align}
    \int_{\R^2} |\nabla(\chi_{\tau, \rho}^2\Phi_{\eps,\rho})|^2 \ dx \ %
    &\leq \ {\hks{2}} \int_{\R^2} \chi_{\tau, \rho}^{ 2} |\nabla\Phi_{\eps,\rho}|^2 \ dx %
    + \frac {\hks{8}}{\tau^2}  \int_{\R^2} \chi_{\tau, \rho}^{2} \Phi_{\eps,\rho}^2 \ dx\\
    &\leq \ C K \Big( \frac 1{\eps |\ln\eps|^{\frac 74} } + \frac{1}{\tau^2 |\ln\eps|^{\frac 94}} \Big). \label{R1-2}
  \end{align}
  Using Cauchy-Schwarz together with inequalities \eqref{R1-0}--\eqref{R1-2}
  then yields
  \begin{align}
    R_1 \ %
    \leq \  C K \Big(\frac{1}{|\ln\eps|^{\frac{3}{4}}} + \frac{\eps^{\frac 12}}{\tau|\ln\eps|} \Big) \ %
    \leq \  \frac{C K}{|\ln {\eps}|^{\frac 12}} \Big(1 + \frac{1}{\tau|\ln\eps|^{\frac 12}} \Big). \ %
  \end{align}

  \medskip

  \textit{Estimate for $R_2$:} Using $|\chi_{\tau, \rho}|\leq 1$,
  $|\nabla\chi_{\tau, \rho}|\leq 2\tau^{-1}$ and Cauchy-Schwarz, we have
  \begin{align}
    R_2 \ &\leq \ 2 \int_{\Ome_{\eps,\rho}^{(1)}}\chi_{\tau,\rho}^2 |\nabla \Phi_{\eps,\rho}| \ dx + \frac 4\tau |\Ome_{\eps,\rho}^{(1)}|^{\frac 12} \Big(\int_{\Ome_{\eps,\rho}^{(1)}} \chi_{\tau, \rho}^2 \Phi_{\eps,\rho}^2 \  dx \Big)^{\frac 12}\\
          &\leq \ 2 \int_{\Ome_{\eps,\rho}^{(1)} \cap B_{\rho-4 \d_\eps}} \chi_{\tau,\rho}^2|\nabla \Phi_{\eps,\rho}| \ dx + \frac {C\ed{(K+1)}}{\tau |\ln \eps|^{\frac 54}}.%
  \end{align}
  For the second estimate, we have applied Lemma
  \ref{lem-separate2}\ref{sep2-auchnoch} and Proposition \ref{prp-RHS}(ii)
  \hks{and we have used that $\tau \geq 8 \d_\eps$ by assumption.}  To estimate
  the first term we recall from Lemma \ref{lem-separate2}\ref{sep2-4b} that
  $\Ome_{\eps,\rho}^{(1)}\cap B_{\rho-4\delta_\eps} = \bigcup_{k}
  G_{\eps,\rho}^{(k)}$ with
  $\HH^1(\p G_{\eps,\rho}^{(k)}) \leq 2\pi d_{\eps,\rho}^{(k)}$ for
  $d_{\eps,\rho}^{(k)}:=\dist(\p G_{\eps,\rho}^{(k)}, \gam_{\eps,\rho})$.  By
  the isoperimetric inequality and Lemma \ref{lem-separate2}\ref{sep2-4b} we
  also have
  $|G_{\eps,\rho}^{(k)}|\leq C \HH^1(\p G_{\eps,\rho}^{(k)})^2 \leq C
  d_{\eps,\rho}^{(k)} \HH^1(\p G_{\eps,\rho}^{(k)})$.  Together with the bound
  on $\|\nabla \Phi_\eps\|_{L^\infty(G^{(k)}_{\eps,\rho})}$ in \eqref{etas-1}
  we then get
  \begin{align}
    \int_{\Ome_{\eps,\rho}^{(1)}} \chi_{\tau,\rho}^2|\nabla \Phi_{\eps,\rho}| \ dx \ %
    &\leq \  \sum_{k} \frac{C |G_{\eps,\rho}^{(k)}|}{d_{\eps,\rho}^{(k)} |\ln\eps|^{\frac 12}}  \ %
    \leq \  \frac{C}{|\ln\eps|^{\frac 12}} \sum_{k} \HH^1(\p G_{\eps,\rho}^{(k)})  \ %
      \leq \ \frac{C\ed{(K+1)}}{|\ln\eps|^{\frac 12}}.
  \end{align}
  For the last inequality we have used that
  $\sum_{k}\HH^1(\p G_{\eps,\rho}^{(k)})= \HH^1(\p (\Ome_{\eps,\rho}^{(1)}\cap
  B_{\rho-4\delta_\eps}))\leq C \ed{(K+1)}$ by Lemma
  \ref{lem-separate2} \ref{sep2-4a}. The above estimates
  together yield the desired bound for $R_2$.
\end{proof}
From Propositions \ref{prp-RHS}--\ref{prp-LHS} we obtain the lower bound for the
leading order terms: 
\begin{proposition}[Lower bound for leading order terms] \label{prp-LOT} %
  Consider a sequence $m_\eps \to m$ with $\eps\to 0$ which satisfies
  \eqref{ass-meps} and let $\Ome_0$, $\SS_m$, $n$ be given by
  \eqref{def-Ome0}. Let $\widehat x\in \SS_m$ and let $\hat \rho\in (0,1)$ be
  the constant from Lemma \ref{lem-separate2}. Then for any
  $\rho\in (0,\hat\rho)$, there exists a subsequence $\eps_j\to 0$ and a
  sequence $\rho_j\leq \rho$, $\rho_j\to \rho$ such that with $\tau_j:=8\delta_{\eps_j}$ and
  $B_\rho := B_\rho(\widehat x)$ we have
 \begin{align}\label{eq-leading}  %
   \liminf_{j\to  \infty} \left(\NN{Du_{\eps_j}}(B_\rho)+N_{\eps_j}[\chi_{\tau_j,\rho_j}]\right) \ %
   &\geq \ 2f(\sqrt{\lambda} \left|\OL n_{\rho} \cdot e_1\right|)\HH^1(\SS_m \cap B_\rho),
 \end{align}
 where $\chi_{\tau,\rho}$ is the cut-off function given in Definition
 \ref{def-cut-off}, $f$ is given in \eqref{def-f} and where
  \begin{align}
      \OL n_{\rho}\  := \ \dashint_{\SS_m\cap B_{\rho}} n \ d\HH^1.
  \end{align}
\end{proposition}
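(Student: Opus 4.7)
The plan is to combine the $BV$--lower semicontinuity of the interfacial part with a duality--based lower bound for the nonlocal term $N_{\eps_j}[\chi_{\tau_j,\rho_j}]$, using the separating curves of Lemma~\ref{lem-separate2}, and then to invoke the infimum identity $\alpha + s^2/\alpha \geq f(s)$ for $\alpha\geq 1$ from \eqref{def-f} with $s = \sqrt{\lam}|\bar n_\rho\cdot e_1|$.

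First I would fix $\rho\in(0,\hat\rho)$ with $\HH^1(\SS_m\cap\partial B_\rho)=0$ (true for a.e.\ $\rho$) and apply Lemma~\ref{lem-separate2} to extract a subsequence $\eps_j\to 0$, separating curves $\gamma_{\eps_j,\rho}$ with $n_{\eps_j}\HH^1\llcorner\gamma_{\eps_j,\rho}\wtos n\HH^1\llcorner\SS_m$ in $B_\rho$ and the length bound $\limsup_j\HH^1(\gamma_{\eps_j,\rho})\leq \alpha\HH^1(\SS_m\cap B_\rho)$, where $\alpha = \liminf_\eps\|Du_\eps\|(B_\rho)/\|Du\|(B_\rho)\geq 1$. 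Setting $\tau_j := 8\delta_{\eps_j}$ and taking $\Phi_{\eps_j,\rho}$, $\chi_{\tau_j,\rho}$ as in Definitions~\ref{def-psieps} and~\ref{def-cut-off}, the dual inequality \eqref{duality-local} combined with the sharp capacity bound Proposition~\ref{prp-RHS}(i)---whose error $C/(|\ln\eps_j|^{1/2}\tau_j)=O(|\ln\eps_j|^{-1/4})$ is negligible for this choice of $\tau_j$---gives
\begin{equation*}
    N_{\eps_j}[\chi_{\tau_j,\rho}] \ \geq \ \frac{2\lam\bigl(\int_{\R^2}\chi_{\tau_j,\rho}^2\,\sigma_{\eps_j}\,\Phi_{\eps_j,\rho}\,dx\bigr)^2}{\HH^1(\gamma_{\eps_j,\rho})}\,(1+o(1)).
\end{equation*}

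Next, Proposition~\ref{prp-LHS} (with the same $\tau_j$, under which its error is $o(1)$) identifies the numerator up to $o(1)$ as $4\bigl(\int_{\gamma_{\eps_j,\rho}}\chi_{\tau_j,\rho}^2\,n_{\eps_j,1}\,d\HH^1\bigr)^2$. Passing to the limit via the weak-$*$ convergence above, together with the pointwise convergence $\chi_{\tau_j,\rho}^2\to\chi_{B_\rho}$ (the annulus $B_\rho\setminus B_{\rho-2\tau_j}$ carries $o(1)$ mass of $\HH^1\llcorner\gamma_{\eps_j,\rho}$ by the uniform bound $\HH^1(\gamma_{\eps_j,\rho})\leq K$ of Lemma~\ref{lem-separate2}(iii) and the vanishing $\HH^1(\SS_m\cap\partial B_\rho)=0$), we obtain
\begin{equation*}
    \liminf_{j\to\infty} N_{\eps_j}[\chi_{\tau_j,\rho}] \ \geq \ \frac{2\lam(\bar n_\rho\cdot e_1)^2\HH^1(\SS_m\cap B_\rho)^2}{\alpha\HH^1(\SS_m\cap B_\rho)} \ = \ \frac{2\lam|\bar n_\rho\cdot e_1|^2}{\alpha}\HH^1(\SS_m\cap B_\rho).
\end{equation*}
Combined with $\liminf_j\|Du_{\eps_j}\|(B_\rho)=2\alpha\HH^1(\SS_m\cap B_\rho)$ and the second identity in \eqref{def-f} applied at $s=\sqrt{\lam}|\bar n_\rho\cdot e_1|$, the total is bounded below by $2\HH^1(\SS_m\cap B_\rho)[\alpha + \lam|\bar n_\rho\cdot e_1|^2/\alpha]\geq 2f(\sqrt{\lam}|\bar n_\rho\cdot e_1|)\HH^1(\SS_m\cap B_\rho)$. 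The pair $(\eps_j,\rho_j)$ with $\rho_j\leq\rho$, $\rho_j\to\rho$ is then obtained by first running the above argument along an exhaustion by radii $\rho'\nearrow\rho$ avoiding the null set $\{\rho':\HH^1(\SS_m\cap\partial B_{\rho'})>0\}$ and diagonalizing.

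The main obstacle is the passage to the limit inside the squared dual pairing: the weak-$*$ convergence of $n_{\eps_j}\HH^1\llcorner\gamma_{\eps_j,\rho}$ tests only against continuous functions, whereas $\chi_{\tau_j,\rho}$ converges only pointwise to the indicator $\chi_{B_\rho}$; controlling the annular correction uniformly in $\eps_j$ is precisely where the uniform length bound from Lemma~\ref{lem-separate2}(iii) and the level-set estimate \ref{sep2-0b} built from Theorem~\ref{thm-level-local} become crucial. Equally delicate is tracking the sharp constant $\pi/|\ln\eps_j|$ in Proposition~\ref{prp-RHS}(i): any constant worse than $\pi$ there would destroy the balance with $\alpha$ needed for the infimum identity to yield exactly $f$ and not a larger constant in the final bound.
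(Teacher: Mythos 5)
Your overall route is the paper's: separating curves from Lemma~\ref{lem-separate2}, the duality bound \eqref{duality-local} combined with the sharp capacity estimate of Proposition~\ref{prp-RHS}(i) and the flux identification of Proposition~\ref{prp-LHS}, and finally $f(s)=\inf_{\alpha\ge1}[\alpha+s^2/\alpha]$. But there is a genuine gap exactly at the point you yourself flag as the main obstacle: you discard the annular correction $\int_{\gamma_{\eps_j,\rho}}(1-\chi_{\tau_j,\rho}^2)\,n_{\eps_j}\cdot e_1\,d\HH^1$ by asserting that $B_\rho\setminus B_{\rho-2\tau_j}$ carries $o(1)$ of the mass of $\HH^1\llcorner\gamma_{\eps_j,\rho}$, ``by the uniform bound $\HH^1(\gamma_{\eps_j,\rho})\le K$ and $\HH^1(\SS_m\cap\partial B_\rho)=0$''. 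Neither fact gives this: the uniform bound only gives $O(1)$, and weak-$*$ convergence of the \emph{vector} measures $n_{\eps_j}\HH^1\llcorner\gamma_{\eps_j,\rho}$ does not control the \emph{scalar} length measure near $\partial B_\rho$ --- the curves may concentrate an $O(1)$ amount of length in the shrinking annulus with normals oscillating at scale comparable to $\tau_j$ (so no cancellation against the Lipschitz cut-off), a contribution invisible both in the limit $n\,\HH^1\llcorner\SS_m$ and in the condition $\HH^1(\SS_m\cap\partial B_\rho)=0$, which can partially cancel the bulk flux and destroy the lower bound. The level-set estimate Lemma~\ref{lem-separate2}\ref{sep2-0b}, which you also invoke here, enters only the capacity bound Proposition~\ref{prp-RHS}(i) and says nothing about this term.

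This is precisely why the proposition is stated with a $j$-dependent radius: the paper first uses $\HH^1(\gamma_{\eps_j,\rho})\le K$ and $\tau_j\to0$ to \emph{choose} (pigeonholing over many disjoint annuli of width $2\tau_j$) a sequence $\rho_j\le\rho$, $\rho_j\to\rho$ with $\HH^1\big(\gamma_{\eps_j,\rho}\cap(B_{\rho_j}\setminus B_{\rho_j-2\tau_j})\big)\to0$, i.e.\ \eqref{eq-nonconcentration}, and only then runs the duality argument with the cut-off $\chi_{\tau_j,\rho_j}$; this non-concentration at the cut-off scale is what kills the annular error. In your write-up the radii $\rho_j$ appear only at the very end, through an exhaustion $\rho'\nearrow\rho$ and diagonalization, and are not used to control this error, so the gap persists at each fixed radius. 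Two smaller points: restricting to radii with $\HH^1(\SS_m\cap\partial B_\rho)=0$ proves the statement only for a.e.\ $\rho$, whereas the proposition (and the paper's proof) covers every $\rho\in(0,\hat\rho)$; and your first display carries the prefactor $2\lambda$ where the duality computation gives $\lambda/2$ --- this happens to cancel against the factor $4$ you later drop from $\big(2\int_{\gamma}\chi^2\,n_{\eps_j}\cdot e_1\,d\HH^1\big)^2$, so the final constant is correct, but the bookkeeping should be fixed; likewise only $\liminf_j\NN{Du_{\eps_j}}(B_\rho)\ge2\alpha\HH^1(\SS_m\cap B_\rho)$ is available (and suffices), not equality.
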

\begin{proof}
  Let $\gamma_{\eps_j,\rho}$ be a sequence of separating curves such that
    the assertions of Lemma \ref{lem-separate2} hold and let $\alp\geq 1$ be
    given \eqref{def-alp2}. Since $\HH^1(\gamma_{\eps_j,\rho})\leq K<\infty$ by
  Lemma \ref{lem-separate2}\ref{sep2-0a}, there is a subsequence of $\eps_j$
  (still denoted by $\eps_j$) and a sequence $\rho_j\to \rho$, such that
\begin{align}\label{eq-nonconcentration}
  \HH^1(\gamma_{\eps_j,\rho}\cap (B_{\rho_j}\BS B_{\rho_j-2\tau_j}))\to  0 %
  \qquad \text{ as } j\to  \infty.
\end{align}
In view of the duality estimate \eqref{duality-local}, we have
  \begin{align} \label{duality-local-2} %
    N_{\eps_j}[\chi_{\tau_j,\rho_j}] \ %
    \lupref{duality-local}\geq \
        \frac{\pi \lam}{2|\ln \eps_j|} \frac{\left|\langle \chi_{\tau_j, \rho_j}\sigma_{\eps_j}, \chi_{\tau_j, \rho_j}
    \Phi_{\eps_j,\rho_j}\rangle_{L^2}\right|^2}{\NNN{|\nabla|^{\frac 12} (\chi_{\tau_j, \rho_j} \Phi_{\eps_j,\rho_j})}{L^2}^2}.
  \end{align}
  An application of Proposition \ref{prp-RHS}(i) and Proposition \ref{prp-LHS} leads to
  \begin{align} \label{est-Neps-final-almost} %
    \liminf_{j\to  \infty} N_{\eps_j}[\chi_{\tau_j,\rho_j}] \ %
    &\geq \  \Big(\frac {\lambda}{\displaystyle  2 \limsup_{\eps \to 0} \HH^1(\gam_{\eps_j,\rho})}\Big) \displaystyle \liminf_{\eps \to 0} \Big( 2\int_{\gam_{\eps_j,\rho} }\chi_{\tau_j, \rho_j}^2 (n_{\eps_j} \cdot e_1)  \ d\HH^1 \Big)^2.
  \end{align}
  Using Lemma \ref{lem-separate2}\ref{sep2-3}--\ref{sep2-2} together with
  \eqref{eq-nonconcentration} then yields
  \begin{align} \label{est-Neps-final} %
    \liminf_{j\to  \infty} N_{\eps_j}[\chi_{\tau_j,\rho_j}] \ &\geq \ \frac{2\lambda}{\alp} \left|\OL n_{\rho}\cdot e_1\right|^2 \HH^1(\SS_m \cap B_\rho).
  \end{align}
  Combining the above estimate with \eqref{def-alp2} we arrive at
  \begin{align}
    \liminf_{j\to  \infty} \frac{\NN{Du_{\eps_j}}(B_\rho)+N_{\eps_j}[\chi_{\tau_j,\rho_j}]}{\HH^1(\SS_m \cap B_\rho)} \ %
    \geq  \ 2  \big(\alp+\frac{\lam}{\alp}\left|\OL n_{\rho}\cdot e_1\right|^2\big) \ %
    \upref{def-f}\geq\ 2 f(\sqrt{\lam} \big|\OL n_{\rho}\cdot e_1\big|).
  \end{align}
\end{proof}

\subsection{Localization argument and conclusion of proof} \label{sus-localize} %

In this section, we give the proof of the \ed{liminf--inequality}
\eqref{low-bd}.  Next to the estimate of the leading order terms from
Proposition \ref{prp-LOT} in the last section, this requires a localization
argument and the estimate of the interaction energy between different
sets. We first show that the interaction energy between a ball
  $B_\rho(\hat x)$ and its complement is negligible as $\eps \to 0$ for almost
  every choice of radius in the following sense:
\begin{lemma} \label{lem-inter} %
  Consider a sequence $m_\eps \to m$ which satisfies \eqref{ass-meps} and
    \eqref{meps-bound} for some sequence $\eps \to 0$.
    Then for any $\widehat x \in Q_\ell$ there is a subsequence $\eps_j\to 0$
  and $\mathcal N\subset (0,1)$ with $|\mathcal N|=0$ such that for any
  $\rho\in (0,\min\{1,\frac{\ell}{2}\})\BS \mathcal N$ we have
  \begin{align}\label{eq-inter}
    \lim_{\eps_j\to  0}\frac{1}{|\ln\eps_j|}\int_{\R^2\BS B_\rho(\widehat x)}\int_{B_\rho(\widehat x)} \frac{|m_{\eps_j}(x)-m_{\eps_j}(y)|^2}{|x-y|^3}\ dxdy \ %
    = \ 0.
  \end{align}
\end{lemma}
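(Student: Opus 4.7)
\medskip

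The strategy is to prove the stronger \emph{integrated} statement
\[
    \int_0^{\bar\rho} \frac{1}{|\ln\eps|}\,I_\eps(\rho)\, d\rho \ \longrightarrow \ 0 \qquad \text{as } \eps\to 0,
\]
where $\bar\rho := \min\{1,\ell/2\}$ and $I_\eps(\rho)$ denotes the double integral in \eqref{eq-inter}. Once this is established, since the integrand is non-negative, convergence in $L^1(0,\bar\rho)$ yields pointwise convergence along a subsequence on the complement of a Lebesgue null set $\mathcal{N}\subset (0,\bar\rho)$, which is exactly the conclusion sought.

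\medskip

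\emph{Step 1 (Fubini).} For fixed $x,y\in\R^2$, the set of $\rho\in(0,\bar\rho)$ with $x\in B_\rho(\hat x)$ and $y\notin B_\rho(\hat x)$ is an interval of length at most $\min(|x-y|,\bar\rho)$. After a symmetrization in $(x,y)$ and an application of Fubini's theorem,
\[
    2\int_0^{\bar\rho} I_\eps(\rho)\, d\rho
    \ \leq \ \iint_{\R^2\times\R^2} \frac{|m_\eps(x)-m_\eps(y)|^2\,\min(|x-y|,\bar\rho)}{|x-y|^3}\, dx\, dy.
\]

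\emph{Step 2 (Long range, $|x-y|>\bar\rho$).} Here the weight satisfies $\min(|x-y|,\bar\rho)/|x-y|^3\leq \bar\rho/|x-y|^3$, and using $|m_\eps|\leq 1$ together with the localization of the relevant $x$ to $B_{\bar\rho}(\hat x)$ inherited from the original expression, this contribution is bounded by $C\bar\rho^2$ uniformly in $\eps$.

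\emph{Step 3 (Short range, $|x-y|\leq\bar\rho$, the main step).} Here the weight reduces to $1/|x-y|^2$ and both $x,y$ lie in $B_{2\bar\rho}(\hat x)$. Passing to polar coordinates $h=y-x=r\omega$,
\[
    T_\eps := \iint_{\substack{B_{2\bar\rho}^2\\|x-y|\leq\bar\rho}}\frac{|m_\eps(x)-m_\eps(y)|^2}{|x-y|^2}\,dx\,dy
    \ \leq \ \int_0^{\bar\rho}\frac{1}{r}\int_{S^1}\!\!\int_{B_{2\bar\rho}} |m_\eps(x)-m_\eps(x+r\omega)|^2\, dx\, d\omega\, dr.
\]
Writing $m_\eps=(u_\eps,v_\eps)$, I will control the inner integral by two complementary bounds. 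For $u_\eps$, using $|u_\eps|\leq 1$ and the standard BV shift estimate together with \eqref{eq-compactness},
\[
    \int_{B_{2\bar\rho}}|u_\eps(x)-u_\eps(x+r\omega)|^2\, dx \ \leq \ \|u_\eps\|_\infty\|u_\eps(\cdot+r\omega)-u_\eps\|_{L^1}\ \leq \ C r K.
\]
For $v_\eps$, combining $\|v_\eps\|_{L^2}^2\leq 2\eps K$ and $\|\nabla v_\eps\|_{L^2}^2\leq 2K/\eps$ from the energy bound gives
\[
    \int_{B_{2\bar\rho}}|v_\eps(x)-v_\eps(x+r\omega)|^2\, dx \ \leq \ \min\{8\eps K,\, 2 r^2 K/\eps\}.
\]
Splitting the $r$-integral at $r=\eps$ and performing the elementary integrations shows $T_\eps\leq C(K,\bar\rho)$ uniformly in $\eps$.

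\emph{Step 4 (Conclusion).} Steps 1--3 together yield $\int_0^{\bar\rho}I_\eps(\rho)\,d\rho\leq C(K,\bar\rho)$ uniformly in $\eps$. Dividing by $|\ln\eps|$ shows $I_\eps/|\ln\eps|\to 0$ in $L^1(0,\bar\rho)$, so that a subsequence $\eps_j\to 0$ converges to $0$ pointwise a.e., giving the null set $\mathcal{N}$.

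\medskip

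The main obstacle is the uniform bound on $T_\eps$ at \emph{Step 3}: the kernel $|x-y|^{-2}$ is critical in two dimensions (the Besov $B^0_{2,2}$ / BMO scale), and a naive bound using $\|\nabla m_\eps\|_{L^2}^2\leq 2K/\eps$ would only give $T_\eps\lesssim|\ln\eps|$, insufficient for our purposes. The balance between the $BV$-character of $u_\eps$ (uniform in $\eps$) and the fact that $v_\eps$ is small in $L^2$ at the sharp rate $O(\sqrt\eps)$ is essential to obtain a bound that is strictly $o(|\ln\eps|)$.
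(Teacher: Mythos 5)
Your argument is correct, and it follows a genuinely different route from the paper's. You integrate in $\rho$ from the very start (Fubini turns the question into a bound on $\int_0^{\bar\rho}I_\eps(\rho)\,d\rho$ with the weight $\min(|x-y|,\bar\rho)/|x-y|^3$), and then prove the \emph{stronger} statement that this $\rho$-average is bounded uniformly in $\eps$, using only the energy bound: the interpolation $|u_\eps|\le 1$ plus the uniform $L^1$-gradient bound \eqref{eq-compactness} kills the critical kernel $|x-y|^{-2}$ for the first component, and the balance $\|v_\eps\|_{L^2}^2\lesssim \eps K$ versus $\|\nabla v_\eps\|_{L^2}^2\lesssim K/\eps$ (split at $r=\eps$) handles the second; dividing by $|\ln\eps|$ and extracting an a.e.-convergent subsequence yields the null set $\mathcal N$. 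The paper instead splits in $|h|$ into three regimes, bounds the extreme regimes by $O(|\ln\eps|^{\frac14})$ via the $H^1$ and $L^\infty$ bounds, and for the logarithmically critical intermediate regime exploits that $D_{\rho,h}$ is a thin set near $\partial B_\rho$, passes to boundary coordinates, and only then averages in $\rho$, using $m_\eps\to m$ in $L^1$ together with the BV translation estimate for the limit $m$. So both proofs generate $\mathcal N$ by averaging in $\rho$, but yours never uses the convergence $m_\eps\to m$ or the limit $m$ at all and gives an $O(1)$ bound on the averaged interaction rather than merely $o(|\ln\eps|)$ pointwise; the paper's argument, at the price of the boundary-layer geometry, gives information for each fixed admissible $\rho$ directly in terms of the limiting jump set, which is closer in spirit to how the localization is used later. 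One presentational caveat: as displayed, your Step 1 inequality with $\iint_{\R^2\times\R^2}$ and no localization has an infinite right-hand side (by periodicity of $m_\eps$ and the boundary condition); the correct statement carries the indicator $\chi_{B_{\bar\rho}(\hat x)}$ on the point closer to $\hat x$, i.e.
\begin{align}
  \int_0^{\bar\rho} I_\eps(\rho)\,d\rho \ \leq \ \int_{B_{\bar\rho}(\hat x)}\int_{\R^2} \frac{|m_\eps(x)-m_\eps(y)|^2\,\min(|x-y|,\bar\rho)}{|x-y|^3}\,dy\,dx,
\end{align}
which is exactly what your Steps 2--3 use, so this is a slip in the formula rather than a gap in the proof.
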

\begin{proof}
  The proof uses similar ideas as \cite{AB98}. By the change of variable $h=y-x$
  and with the notation $B_\rho := B_\rho(\widehat x)$ and
  $D_{\rho,h}:=\{x\in B_\rho: x+h \nin B_\rho \}$ we have
  \begin{align}
    \int_{\R^2\BS B_\rho}\int_{B_\rho} \frac{|m_{\eps}(x)-m_{\eps}(y)|^2}{|x-y|^3}\ dxdy \ %
    = \ \int_{\R^2}\int_{D_{\rho,h}} \frac{|m_\eps(x)-m_\eps(x+h)|^2}{|h|^3} \ dxdh.
  \end{align}
  Since
  $\NTL{m_\eps(\cdot+h)-m_\eps}{Q_\ell}^2 \leq \NTL{\nabla m_\eps}{Q_\ell}^2 |h|^2
  \leq \frac K{\eps} |h|^2$ and $|D_{\rho,h}| \leq \pi\rho^2 \leq \pi$, this
  implies
  \begin{align}
    \int_{|h| < \eps|\ln\eps|^{\frac{1}{4}}}\int_{D_{\rho,h}}\frac{|m_\eps(x)-m_\eps(x+h)|^2}{|h|^3} \ dxdh \ %
    &\leq \ \int_{|h| < \eps|\ln\eps|^{\frac{1}{4}}}\frac{CK}{\eps |h|}\ dh \ %
      =  \ CK |\ln\eps|^{\frac{1}{4}}.
  \end{align}
  For $|h|\geq |\ln\eps|^{-\frac{1}{4}}$, we use $|m_\eps|\leq 1$ to get
  \begin{align}
    \hspace{6ex} & \hspace{-6ex} %
                   \int_{ |\ln\eps|^{-\frac{1}{4}} \leq |h|}\int_{D_{\rho,h}}\frac{|m_\eps(x)-m_\eps(x+h)|^2}{|h|^3}\ dx dh \ %
                   \leq \ \int_{ |\ln\eps|^{-\frac{1}{4}} \leq |h|} \frac{C}{|h|^3}\ dh \ %
                   \leq \ C |\ln\eps|^{\frac{1}{4}}.
  \end{align}
  It hence remains to give an estimate for the integral
  \begin{align}
    I_{\eps,\rho}(m_\eps) \ %
    := \ \frac{1}{|\ln\eps|}\int_{\eps|\ln\eps|^{\frac 14} \leq |h|<|\ln\eps|^{-\frac 14}}\int_{D_{\rho,h}} \frac{|m_\eps(x)-m_\eps(x+h)|^2}{|h|^3}\ dxdh.
  \end{align} 
  We first note that for each $x\in D_{\rho,h}$, the line segment $[x,x+h]$
  intersects $\p B_\rho$ at a unique point $\sigma = x+th$ with $t\in
  [0,1]$. Thus one can apply the change of variables
  $D_{\rho,h} \ni x\mapsto (\sigma, t)\in \p B_\rho \times [0,1]$, and the
  Jacobian of the map $(\sig,\ed{t}) \mapsto x$ is bounded from above by
  $|h|$. Using Fubini, the triangle inequality and $|m_\eps|\leq 1$, we hence
  get
  \begin{align}
    I_{\eps,\rho}(m_\eps) \ %
    &\leq \  %
    \frac{1}{|\ln\eps|}\int_{0}^{1}
    \int_{\eps|\ln\eps|^{\frac 14} \leq |h|<|\ln\eps|^{-\frac 14}}\int_{\p B_\rho} 
 \frac{|m_\eps(\sigma-th)-m(\sig)|}{|h|^2} \ d\sigma dh dt  \\
&\qquad +    \frac{1}{|\ln\eps|}\int_{0}^{1}
    \int_{\eps|\ln\eps|^{\frac 14} \leq |h|<|\ln\eps|^{-\frac 14}} \int_{\p B_\rho}
\frac{|m_\eps(\sigma+(1-t)h)-m(\sig)|}{|h|^2} \ d\sigma dh dt \\ %
    &=: \ X_1 + X_2.
  \end{align}
  By symmetry, it suffices to give the estimate for $X_1$.  With
  $h =: t^{-1} |\ln\eps|^{-\frac{1}{4}} y$ we get
  \begin{align}
    X_1 \ %
    &= \ \frac 1{|\ln\eps|} \int_0^1 \int_{t \eps |\ln\eps|^{\frac 12} \leq |y| \leq t}\int_{\p B_\rho}
      \frac{|m_\eps(\sigma-\d_\eps y)-m(\sig)|}{|y|^2} \ d\sigma dy dt %
    \ = \   \int_{0}^1 \Psi_{\eps,t}(\rho)\ dt,
  \end{align}
  where
  \begin{align}
    \Psi_{\eps,t}(\rho) \ &:= \ \frac 1{|\ln \eps|}  \int_{t \eps |\ln\eps|^{\frac 12} \leq |y| \leq t} \int_{\p B_\rho}\frac 1{|y|^2} \big| m_\eps(\sigma-\delta_\eps y)-m(\sig) \big| \ d\sigma dy.
  \end{align}
  It remains to show that $\lim_{\eps_j\to 0} I_{\eps_j,\rho}(m_{\eps_j}) = 0$
  for a subsequence $\eps_j\to 0$ and a.e.  $\rho\in (0,1)$.  For this, is
  enough to show that along a subsequence $\eps_j\to 0$, we have
  \begin{align} \label{psi-con-toshow} %
    \sup_{t \in (0,1)}\Psi_{\eps_j, t}(\rho) \ \to \ 0 \qquad \text{ for a.e. } \rho\in (0,1).
  \end{align}
  To see \eqref{psi-con-toshow}, for $t\in (0,1)$ fixed we integrate in
  $\rho$. By the coarea formula and by the triangle inequality we then have
  \begin{align}
    \hspace{6ex} & \hspace{-6ex} %
                   \int_0^1  \Psi_{\eps,t}(\rho)\ d\rho \ %
    \leq \ \frac 1{|\ln \eps|} \int_{t \eps |\ln\eps|^{\frac 12} \leq |y| \leq t}\int_{B_1}\frac{|m_\eps(x-\delta_\eps y)-m(x)|}{|y|^2}\ dx dy \\
    &\leq \ \Big(\frac 1{|\ln \eps|}  \int_{\eps |\ln\eps|^{\frac 12} \leq |y| \leq 1} \frac{1}{|y|^2} \ dy \Big) \sup_{|y| \in (0,1)} \Big(\NP{m_\eps-m}{1}+\NP{m(\cdot-\delta_\eps y)-m}{1} \Big).
  \end{align}
  The integral on the right hand
  side above is uniformly bounded for $\eps \in (0,1)$ as a straightforward calculation shows.  Since $m_\eps \to m$
  in $L^1$ and since
  $\NP{m(\cdot-\delta_\eps y)-m}{1}\leq \delta_\eps|y|\|Dm\|(Q_\ell) \leq CK
    |y|$ by \cite[Lemma 3.24]{Ambrosio-Book}, we conclude that
  \begin{align}
    \lim_{\eps\to  0} \sup_{t\in (0,1)}  \int_{0}^1  \Psi_{\eps,t}(\rho)\ d\rho \ = \ 0.
  \end{align}
  It follows that \eqref{psi-con-toshow} holds along a subsequence
  $\eps_j\to 0$ which completes the proof.
\end{proof}
\begin{proposition}[Liminf--inequality]\label{prp-localization}
  For any sequence $m_\eps \in \AA$ with $m_\eps \to m = (u,0) \in \AA_0$ in
  $L^1(Q_\ell)$ for some sequence $\eps \to 0$, we have
  $\liminf_{\eps \to 0} E_\eps[m_\eps] \ \geq \ E_0[m]$.
\end{proposition}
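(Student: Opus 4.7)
The plan is to execute the Localization-plus-Leading-Order scheme outlined in Section \ref{sus-strategy}: cover $\HH^1$-almost all of $\SS_m$ by finitely many disjoint balls centered at Lebesgue-Besicovitch good points of $n \HH^1 \llcorner \SS_m$, apply the local lower bound of Proposition \ref{prp-LOT} on each ball, and control the cross-interaction in the nonlocal energy by Lemma \ref{lem-inter}. Preliminarily, by the $H^1$-density of $C^\infty \cap \AA$ in $\AA$ and continuity of $E_\eps$ on $H^1$, we may replace $m_\eps$ by smooth representatives via a diagonal argument; we may also assume $\limsup_\eps E_\eps[m_\eps] \leq K < \infty$, since otherwise there is nothing to prove. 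In particular, \eqref{eq-compactness} applies, so $\|Du_\eps\|(A) \leq \nu_\eps(A)$ for every Borel set $A \SUS Q_\ell$, where $\nu_\eps$ is the interfacial part of the energy.

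\textbf{Covering construction.} Fix $\delta > 0$. Since $n \HH^1 \llcorner \SS_m$ has density $1$ at $\HH^1$-a.e. $\hat x \in \SS_m$ by Lebesgue-Besicovitch, we have $\OL n_\rho(\hat x) \to n(\hat x)$ and $\HH^1(\SS_m \cap B_\rho(\hat x))/(2\rho) \to 1$ for such points. Since $s \mapsto 2 f(\sqrt\lambda |s|)$ is bounded and continuous, we may pick a finite pairwise disjoint family $\{B_{\rho_k}(\hat x_k)\}_{k=1}^N$ with $\rho_k < \hat\rho$ (the constant of Lemma \ref{lem-separate2}), each $\hat x_k$ a Lebesgue point of $n$, such that
\begin{align}
  \sum_{k=1}^N 2 f\bigl(\sqrt\lambda |n(\hat x_k) \cdot e_1|\bigr) \HH^1(\SS_m \cap B_{\rho_k}(\hat x_k)) \ \geq \ E_0[m] - \delta.
\end{align}
Next, applying Lemma \ref{lem-inter} at each $\hat x_k$ yields an exceptional set $\CN_k \SUS (0,1)$ of measure zero; perturbing the radii $\rho_k$ slightly to avoid $\bigcup_k \CN_k$ (while keeping the balls disjoint) and extracting a common subsequence $\eps_j \to 0$, we arrange
\begin{align} \label{xplan-cross}
  \frac 1{|\ln \eps_j|} \int_{\R^2 \setminus B_{\rho_k}(\hat x_k)} \int_{B_{\rho_k}(\hat x_k)} \frac{|m_{\eps_j}(x) - m_{\eps_j}(y)|^2}{|x-y|^3} \, dx\, dy \ \to \ 0 \qquad (1 \leq k \leq N).
\end{align}
Passing to a further subsequence, apply Proposition \ref{prp-LOT} at each $\hat x_k$, yielding adjusted radii $\rho_{j,k} \to \rho_k$ and the scale $\tau_j := 8\delta_{\eps_j}$. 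Write $\chi_k := \chi_{\tau_j, \rho_{j,k}}$; for $\eps_j$ small, the supports of the $\chi_k$ are pairwise disjoint and disjoint from the supports they need to be.

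\textbf{Splitting the nonlocal energy.} Let $\sig_\eps := \nabla \cdot (m_\eps - M)$ and $\psi_\eps := 1 - \sum_k \chi_k^2$. Using the algebraic decomposition $\sig_\eps = \sum_k \chi_k^2 \sig_\eps + \psi_\eps \sig_\eps$ and bilinearity of the $\dot H^{-1/2}$ inner product,
\begin{align}
  \||\nabla|^{-\frac 12}\sig_\eps\|_{L^2}^2 \ = \ \sum_k \||\nabla|^{-\frac 12}(\chi_k \sig_\eps)\|_{L^2}^2 + \||\nabla|^{-\frac 12}(\psi_\eps \sig_\eps)\|_{L^2}^2 + 2 \sum_{j < k} I_{jk} + 2 \sum_k J_k,
\end{align}
where $I_{jk}$, $J_k$ are the mixed pairings of $\chi_j \sig_\eps, \chi_k \sig_\eps$ and of $\chi_k \sig_\eps, \psi_\eps \sig_\eps$, respectively (note this uses $\chi_k^2 \sig_\eps$ in $N_\eps[\chi_k]$, but since the $\chi_k^2$ and $\chi_k$ play interchangeable roles up to notational convention, we identify). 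Integrating by parts twice (writing $\sig_\eps = \nabla \cdot(m_\eps - M)$, moving derivatives onto the kernel $1/|x-y|$), each such cross term is dominated by a Gagliardo-type integral of the form appearing in \eqref{xplan-cross}, plus boundary commutators controlled by $\|\nabla \chi_k\|_{L^\infty} \leq 2\tau_j^{-1}$ applied to $L^\infty$ data. After division by $|\ln \eps_j|$ all such terms tend to $0$. Dropping the non-negative $\psi_\eps$ contribution yields
\begin{align}
  E_{\eps_j}[m_{\eps_j}] \ \geq \ \sum_{k=1}^N \bigl( \nu_{\eps_j}(B_{\rho_{j,k}}) + N_{\eps_j}[\chi_k] \bigr) + o(1).
\end{align}

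\textbf{Conclusion.} Applying Proposition \ref{prp-LOT} on each ball (using $\|Du_\eps\| \leq \nu_\eps$) and summing,
\begin{align}
  \liminf_{j \to \infty} E_{\eps_j}[m_{\eps_j}] \ \geq \ \sum_{k=1}^N 2 f\bigl(\sqrt\lambda |\OL n_{\rho_k}(\hat x_k) \cdot e_1|\bigr) \HH^1(\SS_m \cap B_{\rho_k}(\hat x_k)).
\end{align}
Letting first $\rho_k \to 0$ (using $\OL n_{\rho_k} \to n(\hat x_k)$ at Lebesgue points and the continuity of $f$) and then $\delta \to 0$ gives $\liminf_\eps E_\eps[m_\eps] \geq E_0[m]$, as desired. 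The main obstacle is the control of the nonlocal cross-terms $I_{jk}$, $J_k$: one must turn the singular bilinear pairing $\iint \sig_\eps \sig_\eps / |x-y|$ across disjoint regions into the Gagliardo-type quantity supplied by Lemma \ref{lem-inter}, so that a single subsequence and a single choice of radii (avoiding countably many null sets) simultaneously kills all $O(N^2)$ cross-interactions; the cut-off scale $\tau_j = 8\delta_{\eps_j}$ and the finiteness of the cover keep the boundary commutators subordinate to the leading logarithmic scale.
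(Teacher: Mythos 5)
Your architecture is the same as the paper's (reduction to smooth sequences with bounded energy, a Besicovitch cover of $\SS_m$ by finitely many disjoint balls, Proposition \ref{prp-LOT} on each ball, Lemma \ref{lem-inter} for cross-interactions), but there is a genuine gap in the step where you dispose of the nonlocal cross terms. After integrating by parts, the pairing of $\chi_k^2\sig_\eps$ with $\psi_\eps\sig_\eps$ is \emph{not} dominated by the quantity supplied by Lemma \ref{lem-inter} plus commutators with $\nabla\chi_k$: Lemma \ref{lem-inter} only controls the Gagliardo integral over $B_{\rho_k}\times(\R^2\BS B_{\rho_k})$, whereas $\psi_\eps$ is nonzero already inside $B_{\rho_k}$, on the transition annulus $B_{\rho_{j,k}-\tau_j}\BS B_{\rho_{j,k}-2\tau_j}$ and on $B_{\rho_k}\BS B_{\rho_{j,k}-\tau_j}$, where its support overlaps or abuts that of $\chi_k$. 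The resulting annulus--annulus Gagliardo terms are not "boundary commutators applied to $L^\infty$ data", and they cannot be estimated crudely: the charge $\sig_\eps$ contains $\p_2 v_\eps$, for which only the $\eps$-weighted bounds $\NT{v_\eps}^2\le C K\eps$ and $\eps\NT{\nabla m_\eps}^2\le CK$ are available, so e.g. $\int_{\text{annulus}}|\nabla v_\eps|\,dx$ can be of order $\eps^{-1/2}$ (up to logarithms), which overwhelms the prefactor $|\ln\eps_j|^{-1}$. This is precisely why the paper's proof makes an \emph{additional} selection of radii, shrunken radii $\rho_j\to\rho$ and subsequences guaranteeing \eqref{eq-annuli}, i.e. $\NN{Du_{\eps_j}}(B_\rho\BS B_{\rho_j-4\tau_j})\to0$ and $\NN{Du}(\p B_\rho)=0$, and then in Lemma \ref{lem-inter-energy} splits the annular Gagliardo integral into the ranges $|h|\le\eps$, $\eps\le|h|\le|\ln\eps|^{-1}$, $|\ln\eps|^{-1}\le|h|\le4\tau_\eps$ (using $\eps\NT{\nabla u_\eps}^2\le CK$, the $L^1$-bound of $\nabla\t u_\eps$ on the annulus, and the trivial kernel bound, respectively), treating the $v_\eps$-part separately with the $\eps$-weighted bounds. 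Your radius/subsequence selection only avoids the null set of Lemma \ref{lem-inter}, so the annular non-concentration is not available in your argument, and the assertion that "after division by $|\ln\eps_j|$ all such terms tend to $0$" is unsupported exactly where the proof is hardest.

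Three further points need repair, though they are fixable. First, you cannot simply "identify" $\chi_k$ and $\chi_k^2$: $N_\eps[\chi_k]$ in \eqref{def-Neps} is quadratic in $\chi_k\sig_\eps$, while your partition of unity produces $\chi_k^2\sig_\eps$; either rerun Propositions \ref{prp-RHS}--\ref{prp-LOT} with the cutoff $\chi_{\tau,\rho}^2$ (harmless, it satisfies the same properties with $|\nabla\chi_{\tau,\rho}^2|\le4\tau^{-1}$), or use the paper's asymmetric splitting $\sig_\eps=\chi_k\sig_\eps+(1-\chi_k)\sig_\eps$ inside each ball. Second, your "algebraic" bilinear expansion mixes the periodic $\dot H^{-\frac12}(Q_\ell)$ norm appearing in $E_\eps$ with the planar integrals over $\R^2$ defining $N_\eps[\chi_k]$; passing between the two is exactly what Lemma \ref{lem-sing_H12}, Lemma \ref{lem-tworep} and the far-field estimate \eqref{est-ff} provide, at the cost of an admissible error $C(\ell,M)$, not an identity, and this bookkeeping (including the positivity of the discarded $\psi_\eps$ self-interaction up to that error) should appear. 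Third, the final "letting first $\rho_k\to0$" is not legitimate once the finite cover is fixed, since the lower bound you derived refers to those specific balls; the smallness of the discrepancy between $f(\sqrt\lam|\OL n_{\rho_k}(\hat x_k)\cdot e_1|)$ and $f(\sqrt\lam|n\cdot e_1|)$ must be built into the choice of the fine cover before applying Vitali--Besicovitch, as in \eqref{lip-ball}.
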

\begin{proof}
  By the argument following equation \eqref{ass-meps}, we can assume that
    $m_\eps$ satisfies \eqref{ass-meps} and \eqref{meps-bound}.  Let
    $\eps_j \to 0$ be a subsequence which attains the infimum, i.e.
    $\lim_{j_ \to \infty} E_{\eps_j}[m_{\eps_j}] = \liminf_{\eps \to 0}
    E_\eps[m_\eps]$.

  \medskip
  
 \medskip

 \emph{Step 1: Localization.} First we claim that for each $\hat x\in \SS_m$,
 there is a set $\mathcal{N}\subset (0,1)$ with $|\mathcal{N}|=0$, such that for
 each $\rho\in (0,\hat \rho)\setminus \mathcal{N}$, where $\hat \rho\in (0,1)$
 is the same constant as in Lemma \ref{lem-separate2}, there is a subsequence of
 $\eps_j$, which we do not relabel, and a sequence
 $\rho_j\rightarrow \rho, \rho_j\leq \rho$, with the following properties:
\begin{gather}  %
               \liminf_{j\to  \infty} \left(\NN{Du_{\eps_j}}(B_\rho(\widehat x))+N_{\eps_j}[\chi_{\tau_j,\rho_j}]\right) \ %
       \geq \ 2f(\sqrt{\lambda} \left|\OL n_{\widehat x, \rho} \cdot e_1\right|)\HH^1(\SS_m \cap B_\rho(\widehat x)), \label{eq-annuli-0} \\
    \lim_{\eps_j\to  0}\frac{1}{|\ln\eps_j|}\int_{\R^2\BS B_\rho(\widehat x)}\int_{B_\rho(\widehat x)}\frac{|m_{\eps_j}(x)-m_{\eps_j}(y)|^2}{|x-y|^3}\ dxdy \ %
    = \ 0, \label{eq-annuli-2}
    \end{gather}
\begin{align}\label{eq-annuli}
      \begin{aligned}
        \NN{Du_{\eps_j}} \big(B_{\rho}(\widehat{x})\BS B_{\rho_j-4\tau_j}(\widehat{x})\big)\ &\to  \ 0 \qquad \text{ as } j\to  \infty,\\
        \NN{Du}(\p B_{\rho}(\widehat x)) \ &= \ 0. 
      \end{aligned}
    \end{align}
    Here
    $\OL n_{\widehat x,\rho} := \dashint_{\SS_m\cap B_{\rho}(\widehat x)} n \
    d\HH^1$. Indeed, by Proposition \ref{prp-LOT} and Lemma \ref{lem-inter}, for
    each $\widehat x\in \SS_m$ there is $\mathcal{N}_1\subset (0,1)$ with
    $|\mathcal{N}_1|=0$ such that for each
    $\rho\in (0,\hat \rho)\setminus \mathcal{N}_1$, one can find a subsequence
    of $\eps_j$ (not relabelled) and a sequence $\rho_j\rightarrow \rho$,
    $\rho_j\leq \rho$ such that \eqref{eq-annuli-0} and \eqref{eq-annuli-2} hold
    true.  Moreover, by the uniform boundedness of $\|Du_\eps\|(Q_\ell)$
    (cf. \eqref{eq-compactness}), there is $\mathcal{N}_2\subset (0,1)$,
    $|\mathcal{N}_2|=0$ such that for each
    $\rho\in (0,1)\setminus (\mathcal{N}_1\cup \mathcal{N}_2)$ up to a further
    subsequence of $\eps_j$ (and $\rho_j$ correspondingly) one has
    \eqref{eq-annuli}. Thus the claim follows with
    $\mathcal{N}=\mathcal{N}_1\cup \mathcal{N}_2$.

\medskip

    Let $N \in \N$ be fixed. Since $f$ is Lipschitz continuous and since
    $\HH^1(\SS_m) \leq K$ (cf. \eqref{def-f}, \eqref{eq-compactness}), for any
    $\widehat x \in \SS_m$ there is
    $\rho_0=\rho_0(\widehat{x}, \sqrt{\lam}, N, K)\in (0,\hat \rho)$ such that
    for any $\rho\in (0,\rho_0)$
    \begin{align} \label{lip-ball} %
      2 \int_{\SS_m \cap B_{\rho}(\widehat x)} \hspace{-1ex}\big( f \big(\sqrt{\lam} |n \cdot
      e_1| \big) - f \big(\sqrt{\lam}|\OL n_{\widehat x, \rho} \cdot e_1| \big)
      \big) \ d \HH^1 \ %
     \ed{ \leq} \  \frac{\HH^1(\SS_m \cap B_{\rho}(\widehat x))}{2
      N \HH^1(\SS_m)},
  \end{align}
   The family of balls
  $\mathcal{F}:=\{B_{\rho}(\widehat{x}): \widehat x\in \SS_m,\ \rho \in
  (0,\rho_0)\BS \mathcal N\}$ is a fine cover of $\SS_m$. By the
  Vitali-Besicovitch covering theorem applied to the Radon measure
  $f(\sqrt{\lam} |n\cdot e_1|)\HH^1\llcorner \SS_m$
  \cite[Thm.2.19]{Ambrosio-Book}, there are finitely many disjoint balls
  $B_k := B_{\rho_k}(\widehat{x}_k)\in \mathcal{F}$, $k=1, \ldots, K_0$ with
 \begin{align} \label{est-lo-3} %
   \sum_{k=1}^{K_0}  2 \int_{\SS_m\cap B_k}f(\sqrt{\lam} |n \cdot e_1|) \ d\HH^1   \ %
     \geq \ E_0[m] - \frac{1}{2N}.
 \end{align}
Combining \eqref{lip-ball} and
 \eqref{est-lo-3} and abbreviating $\OL n_k:=\OL n_{\widehat{x}_k,\rho_{k}}$, we
 obtain
\begin{align} \label{est-lo-4}
  \sum_{k=1}^{K_0}  2f(\sqrt{\lambda} \left|\OL n_{k} \cdot e_1\right|)\HH^1(\SS_m\cap B_k) \ %
  \geq \ E_0[m]-\frac{1}{N}.
\end{align}
Since there are only finitely many balls, one can
take a subsequence, which we still denote by $\eps_j$, such that
\eqref{eq-annuli-0}--\eqref{eq-annuli} hold in each $B_k$, i.e. there is a subsequence $\eps_j$ (independent of $k$) such that for each
$B_k=B_{\rho_k}(\widehat x_k)$, $k=1,\cdots, K_0$, there exist
$\rho_{k,j}\to \rho_k$ as $j\to \infty$ such that
\eqref{eq-annuli-0}--\eqref{eq-annuli} hold true with $\rho$, $\widehat x$ and $\rho_j$
replaced by $\rho_k$, $\widehat x_k$ and $\rho_{k,j}$.

\medskip

\emph{Step 2: Decomposition of the energy and conclusion.} Let
$\chi_{k,j}:=\chi_{\tau_j, \rho_{k,j}}(\cdot -\widehat{x}_k)$ be the sequence of
cut-off functions with respect to $B_k$ (cf. \eqref{def-chitau}). The proof is
then concluded by showing the lower bound
\begin{align} \label{R-geq0} %
  \liminf R_{\eps_j} \ \geq \ 0.
\end{align}
where the remainder term $R_{\eps_j}$ is given by
\begin{align} \label{Rep-lob} %
  R_{\eps_j} \ %
  &:= \ %
    E_{\eps_j}[m_{\eps_j}] -  \sum_{k}\Big[\nu_{\eps_j}(B_k)  + N_{\eps_j}[\chi_{k,j}] \Big].
\end{align}
and where $\nu_{\eps_j}(B_k) + N_{\eps_j}[\chi_{k,j}]$ (cf. \eqref{def-Neps})
are the leading order terms in each ball which have been estimated in
Proposition \ref{prp-LOT}. Indeed, \eqref{eq-annuli-0} together with
\eqref{R-geq0} gives the lower bound
\begin{align} %
  \lim_{j\to  \infty} E_{\eps_j}[m_{\eps_j}] \ %
  = \ \liminf_{\eps\rightarrow 0} E_{\eps}[m_\eps] \ %
  \geq \ \sum_{k = 1}^{K_0}  2f(\sqrt{\lambda} \left|\OL n_k \cdot e_1\right|)\HH^1(\SS_m \cap B_k).
\end{align}
Together with \eqref{est-lo-4} and since $N\in \N$ is arbitrary, this
  yields the desired estimate.

\medskip

It remains to show \eqref{R-geq0}: Since
  $\nu_{\eps_j}(Q_\ell \BS \bigcup_k B_k) \geq 0$ and using \eqref{def-Neps} as
  well as the singular integral characterization of the
  $\dot{H}^{-\frac{1}{2}}$-norm in Lemma \ref{lem-sing_H12}, we have
\begin{align} \label{Rep-lob-lo} %
  R_{\eps_j} \ %
  &\geq \ \frac{\lam}{4|\ln \eps_j|} \bigg( \lim_{N\to \infty, N \in
    \N} \int_{Q_\ell} \int_{\R \times [-N\ell,N
    \ell]} \frac{\sig_{\eps_j}(x)\sig_{\eps_j}(x-h)}{|h|}\ dh dx \\ %
  &\qquad\qquad\qquad - \sum_k \int_{\t B_k} \int_{\R^2} \frac{(\t
    \chi_{\eps_j,k}\sig_{\eps_j})(x)(\t \chi_{\eps_j,k}\sig_{\eps_j})(x-h)}{|h|}\ dhdx
    \bigg).
  \end{align}
  \hks{Here, $\tilde B_k\subset \R^2$ is any single connected component of
  $\Pi^{-1}(B_k)$, where $\Pi:\R^2\rightarrow Q_\ell$ is the canonical
  projection and where $\t \chi_{k,j}\in C_c^\infty(\t B_k)$ is the
  corresponding cut-off function supported in $\t B_k$.}  We first estimate the
  long-range interactions: By Lemma \ref{lem-tworep} we have
\begin{align}  \label{est-ff} %
  \hspace{6ex} & \hspace{-6ex} %
                 \lim_{N\rightarrow \infty, N \in \N} \bigg| \int_{Q_\ell}\int_{\R \times ([-N\ell,N\ell]\setminus [-\ell, \ell])}\frac{\sig_{\eps_j}(x)\sig_{\eps_j}(x-h)}{|h|} \ dh dx \bigg| \\
  &\leq \lim_{N \to \infty, N \in \N} \int_{Q_\ell} \int_{\R\times ([-N\ell,N\ell] \BS [-\ell,\ell])}  \frac{|\tilde{m}_{\eps_j}(x-h)-\tilde{m}_{\eps_j}(x)|^2}{|h|^3} \ dh dx \ %
  \leq \ C(\ell, M),
\end{align}
with $\tilde m_\eps:=m_\eps-M$ and where we used $\NI{\t m_\eps} \leq 2$ and
$\spt \tilde m_\eps \CUS Q_\ell$. Therefore, we have
\begin{align}
  \liminf_{j \to \infty} R_{\eps_j} \ \geq \ \liminf_{j \to \infty} \Big(\sum_{k=1}^{K_0} X_{\eps_j}^{(1,k)} + X_{\eps_j}^{(2)} \Big),
\end{align}
where we define
\begin{align} \label{def-X-1-2} %
  X_{\eps_j}^{(1,k)}
  &:=\ \frac{\lam}{4|\ln\eps_j|} \int_{\t B_k}\int_{\R\times  [-\ell, \ell]}\frac{(\t\chi_{k,j}\sig_{\eps_j})(x)((1-\t \chi_{k,j})\sig_{\eps_j})(x-h)}{|h|} \ dhdx, \\
  X_{\eps_j}^{(2)} \ %
  &:= \ \frac{\lam}{4|\ln\eps_j|} \int_{Q_\ell}\int_{\R\times [-\ell, \ell]}\frac{((1-\eta_j)\sigma_{\eps_j})(x) \sigma_{\eps_j}(x-h) }{|h|} \ dhdx
\end{align}
using the notation $\eta_j:= \ \sum_k\chi_{k,j}$.  In the next lemma we show
  that $X_{\eps_j}^{(1,k)} \to 0$ as $j\rightarrow \infty$ and $\liminf_{j\rightarrow \infty} X_{\eps_j}^{(2)} \geq 0$, which
  concludes the proof of \eqref{R-geq0} and hence of the proposition.
\end{proof}
\begin{lemma}\label{lem-inter-energy} With the assumptions of Proposition
    \ref{prp-localization}, for $X_{\eps_j}^{(1,k)}$ and
    $X_{\eps_j}^{(2)}$ defined in \eqref{def-X-1-2}, we have
  \begin{align}
    \lim_{j \to \infty} X_{\eps_j}^{(1,k)} \ = \ 0 \qquad \text{for $k \in \{ 1, \ldots, K_0 \}$}, \text{\qquad\qquad and \quad} %
    \liminf_{j \to \infty} X_{\eps_j}^{(2)} \ \geq \ 0. 
  \end{align}
\end{lemma}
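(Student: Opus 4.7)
The idea is to recast both $X^{(1,k)}_{\eps_j}$ and $X^{(2)}_{\eps_j}$ as cross-interactions between spatially separated charge densities and to bound them using the singular integral representation of the $\dot{H}^{-\frac 12}$-inner product (Lemma \ref{lem-sing_H12}) together with the cross-ball decay \eqref{eq-annuli-2}. The key algebraic observation is that, since the supports of the cut-offs $\t\chi_{l,j}$ for $l\neq k$ are pairwise disjoint and disjoint from $\t B_k$, one has $1-\t\chi_{k,j}=(1-\eta_j)+\sum_{l\neq k}\chi_{l,j}$ after unfolding the periodic setting to $\R^2$, so both integrands factor through $\phi(x)\psi(y)$ with $\spt\phi$ and $\spt\psi$ essentially disjoint.

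For $X^{(1,k)}_{\eps_j}$ I would split via the above identity and treat each piece (inter-ball and ball-to-exterior) as a bilinear form $\frac{\lam}{4|\ln\eps_j|}\iint \phi(x)\sig_{\eps_j}(x)\,\psi(y)\sig_{\eps_j}(y)\,|x-y|^{-1}\,dx\,dy$ with $\spt\phi\cap\spt\psi=\emptyset$. Using $\sig_{\eps_j}=\nabla\cdot(m_{\eps_j}-M)$ and integrating by parts twice, Lemma \ref{lem-sing_H12} rewrites this as a double integral of $|m_{\eps_j}(x)-m_{\eps_j}(y)|^2/|x-y|^3$ over $\spt\phi\times\spt\psi$, plus lower-order terms from the smooth background $M$ and boundary contributions involving $\nabla\t\chi_{k,j}$. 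Since $\spt\phi\subset\t B_k$ and $\spt\psi\subset\R^2\BS B_k$, the principal term is $o(|\ln\eps_j|)$ by \eqref{eq-annuli-2}. The annular corrections are supported in $B_{\rho_{k,j}-\tau_j}\BS B_{\rho_{k,j}-2\tau_j}$ and are $o(|\ln\eps_j|)$ because $\tau_j=8\delta_{\eps_j}\to 0$ and because $\|Du_{\eps_j}\|$ on that annulus vanishes by \eqref{eq-annuli}; paired with $|\nabla\t\chi_{k,j}|\leq 2\tau_j^{-1}$, the total contribution is negligible after dividing by $|\ln\eps_j|$, yielding $X^{(1,k)}_{\eps_j}\to 0$.

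For $X^{(2)}_{\eps_j}$ I would expand $\sig_{\eps_j}=\eta_j\sig_{\eps_j}+(1-\eta_j)\sig_{\eps_j}$ in the second factor of the integrand, splitting $X^{(2)}_{\eps_j}$ into (i) a self-interaction $\frac{\lam}{4|\ln\eps_j|}\iint (1-\eta_j)\sig_{\eps_j}(x)\,(1-\eta_j)\sig_{\eps_j}(y)\,|x-y|^{-1}\,dx\,dy$, which by Lemma \ref{lem-sing_H12} equals a positive multiple of $|\ln\eps_j|^{-1}\||\nabla|^{-\frac 12}((1-\eta_j)\sig_{\eps_j})\|_{L^2}^2\geq 0$, and (ii) a cross-term $\sum_k \frac{\lam}{4|\ln\eps_j|}\iint (1-\eta_j)\sig_{\eps_j}(x)\,\chi_{k,j}\sig_{\eps_j}(y)\,|x-y|^{-1}\,dx\,dy$ in which each summand is precisely of the bilinear form treated for $X^{(1,k)}_{\eps_j}$ above (now with $\phi=\chi_{k,j}$ supported in $\t B_k$ and $\psi=1-\eta_j$ supported outside $\bigcup_m B_{\rho_{m,j}-2\tau_j}$) and therefore tends to zero. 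Combining these gives $\liminf_j X^{(2)}_{\eps_j}\geq 0$.

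The delicate step is the integration-by-parts that converts the weighted singular integral $\iint \phi\sig(x)\,\psi\sig(y)\,|x-y|^{-1}\,dx\,dy$ into a double integral of $|m_{\eps_j}(x)-m_{\eps_j}(y)|^2/|x-y|^3$ across the cross-support: stripping the two divergences off $\sig_{\eps_j}$ generates boundary-type contributions supported in the thin annulus where $\nabla\t\chi_{k,j}$ is non-zero, and it must be verified that these remain $o(|\ln\eps_j|)$. This is the reason for the logarithmic cutoff scale $\tau_j=8\delta_{\eps_j}=8|\ln\eps_j|^{-\frac 14}$ and for the non-concentration estimate \eqref{eq-annuli}.
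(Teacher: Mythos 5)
Your overall strategy is the same as the paper's (integrate the divergences by parts, use the cross--ball decay \eqref{eq-annuli-2} for the far interaction, use the annulus non--concentration \eqref{eq-annuli} and the cut-off scale $\tau_j$ for the rest, and isolate a nonnegative self--interaction for $X^{(2)}_{\eps_j}$), but two steps that you assert are exactly the technically delicate ones, and as stated they do not go through. First, for $X^{(1,k)}_{\eps_j}$ the supports of $\t\chi_{k,j}$ and $1-\t\chi_{k,j}$ are \emph{not} essentially disjoint: they overlap on the transition annulus $\t B_{\rho_{k,j}-\tau_j}\BS \t B_{\rho_{k,j}-2\tau_j}$, and after the integration by parts the principal term contains the annulus$\times$annulus integral of $|\t m_{\eps_j}(x)-\t m_{\eps_j}(y)|^2/|x-y|^3$, which lies entirely inside $B_{\rho}\times B_{\rho}$ and is therefore not covered by \eqref{eq-annuli-2}. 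Controlling it is not a matter of "$\tau_j\to 0$ plus $|\nabla\t\chi_{k,j}|\le 2\tau_j^{-1}$": for the $u$--component one needs a multiscale splitting ($|x-y|\le \eps$ via the exchange bound $\eps\|\nabla m_{\eps_j}\|_{L^2}^2\le K$, intermediate scales via $\|\nabla \t u_{\eps_j}\|_{L^1}$ on the slightly larger annulus, which is where \eqref{eq-annuli} enters, and a trivial bound at scales up to $4\tau_j$), and for the $v$--component the argument you cite fails altogether, since $\|Du_{\eps_j}\|$ says nothing about $v_{\eps_j}$ and there is no uniform $L^1$ bound on $\nabla v_{\eps_j}$; one must instead use $\|v_{\eps_j}\|_{L^2}^2\le K\eps_j$ and $\|\nabla v_{\eps_j}\|_{L^2}^2\le K/\eps_j$ with a different splitting. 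This is precisely the part of the proof you defer to "must be verified", so it is a genuine gap rather than a routine omission.

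Second, for $X^{(2)}_{\eps_j}$ your positivity claim for the self--interaction of $(1-\eta_j)\sig_{\eps_j}$ misapplies Lemma \ref{lem-sing_H12}: that lemma requires $\int_{Q_\ell}(1-\eta_j)\sig_{\eps_j}\,dx=0$ (which need not hold, since $\int(1-\eta_j)\sig_{\eps_j}=-\int\nabla\eta_j\cdot\t m_{\eps_j}$), and its representation involves the limit over strips $\R\times[-N\ell,N\ell]$, whereas $X^{(2)}_{\eps_j}$ only integrates $h$ over $\R\times[-\ell,\ell]$; the discrepancy is a far--field term whose integrand $\sig(x)\sig(x-h)/|h|$ is not even absolutely integrable for a bare charge, and it is only controlled (by the \eqref{est-ff}--type bound, giving an admissible $O(1)$ error) after rewriting the charge as the divergence of a bounded field. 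This is why the paper first decomposes $(1-\eta_j)\sig_{\eps_j}=\nabla\cdot((1-\eta_j)\t m_{\eps_j})+\nabla\eta_j\cdot\t m_{\eps_j}$, applies the positivity argument only to the divergence part, and estimates the extra term involving $\nabla\eta_j\cdot\t m_{\eps_j}$ separately (the term $R^{(3)}_{\eps}$, using $|\nabla\eta_j|\le\tau_j^{-1}$ and the energy bounds). Your decomposition never isolates this correction, so the nonnegativity of your term (i) is unproven as stated; the conclusion can be rescued, but only by inserting exactly the step you skipped.
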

\begin{proof}
  We use the notation from Proposition \ref{prp-localization} and its proof.
  For notational simplicity, we assume that $\widehat x_k = 0$, $\rho_k=\rho$
  and write $\eps:=\eps_j$, $\chi_j:=\chi_{k,j}$, $\rho_j:=\rho_{k,j}$,
  $\tau_\eps:=\tau_{ \eps_j}$, keeping in mind that $\eps\to 0$,
  $\chi_j\to \chi_{B_\rho}$, $\rho_j\to \rho$ and $\tau_j\to 0$ as
  $j\to \infty$. 

  \medskip

  \emph{Estimate of $X_{\eps_j}^{(1,k)}$:} The proof is based on an
  integration by parts and the estimate of the long-range interaction given in
  Lemma \ref{lem-inter}. Integrating by parts and using $\spt \sig_\eps\Subset Q_\ell$, we then get
  \begin{align}
    |X_\eps^{(1,k)}| \ %
    &\leq  \ \frac{\lam}{|\ln\eps|}\int_{\t B_k}\int_{\R\times  [-\ell, \ell]}\frac{|\tilde m_\eps(x)-\tilde m_\eps(x-h)|^2\t\chi_j(x)(1-\t\chi_j)(x-h)}{|h|^3}\ dh dx\\
    &\qquad+ \frac{\lam}{|\ln\eps|}\int_{\t B_k}\int_{\R\times  [-\ell, \ell]} \frac{|\nabla \t\chi_j(x)||\tilde m_\eps(x)-\tilde m_\eps(x-h)|}{|h|^2}\ dhdx\\
    &\qquad + \frac{\lam}{|\ln\eps|}\int_{\t B_k}\int_{\R\times  [-\ell, \ell]}\frac{|\nabla\t\chi_j(x)||\nabla \t\chi_j(x-h)||\tilde m_\eps(x)-\tilde m_\eps(x-h)|}{|h|}\ dhdx\\
    &=: \ I_1+ I_2+ I_3.
  \end{align}
  Since $\t \chi_j = 0$ outside $\t B_{\rho}$ and $1-\t \chi_j = 0$ in
  $\t B_{\rho-2\tau_\eps}$, then with
  $N_{\rho,\tau_\eps}:=\t B_{\rho}\BS \t B_{\rho_j-2\tau_\eps}$ we have
\begin{align}  \label{I-one} %
  I_1 \ &\leq \ %
          \frac{\lam}{|\ln\eps|}\left(\int_{\t B_{\rho}^c}\int_{\t B_{\rho}}\frac{|\tilde m_\eps(x)-\tilde m_\eps(y)|^2}{|x-y|^3}\ dxdy  \right. \label{darin} \\
        &\qquad + \left.\iint_{N_{\rho,\tau_\eps}\times N_{\rho,\tau_\eps}}\frac{|\tilde u_\eps(x)-\tilde u_\eps(y)|^2}{|x-y|^3} dxdy + \iint_{N_{\rho,\tau_\eps}\times N_{\rho,\tau_\eps}}\frac{|\tilde v_\eps(x)-\tilde v_\eps(y)|^2}{|x-y|^3}dxdy\right),
\end{align}
where $\t{u}_\eps=u_\eps-U$ and $\t{v}_\eps=v_\eps-V$. With our selection of
$\rho$ by \eqref{eq-annuli-2} the first integral on the right hand side of
  \eqref{I-one} vanishes in the limit $\eps \to 0$. \hks{For the second integral we
write
\begin{align}
  \hspace{6ex} & \hspace{-6ex} %
                 \frac{1}{|\ln\eps|} \iint_{N_{\rho,\tau_\eps}\times N_{\rho,\tau_\eps}}\frac{|\tilde u_\eps(x)-\tilde u_\eps(y)|^2}{|x-y|^3} dxdy \\ %
&=\frac{1}{|\ln\eps|}\bigg( \int_{N_{\rho,\tau_\eps}}\int_{ N_{\rho,\tau_\eps}\cap \{y: |x-y|\leq \eps\}}\frac{|\tilde u_\eps(x)-\tilde u_\eps(y)|^2}{|x-y|^3} \ dydx \\
&\qquad\qquad+ \int_{N_{\rho,\tau_\eps}}\int_{N_{\rho,\tau_\eps}\cap \{y: \eps\leq |x-y|\leq |\ln\eps|^{-1}\}} \frac{|\tilde u_\eps(x)-\tilde u_\eps(y)|^2}{|x-y|^3} \ dydx \\
&\qquad\qquad + \int_{N_{\rho,\tau_\eps}}\int_{N_{\rho,\tau_\eps}\cap \{y: |\ln\eps|^{-1}\leq |x-y|\leq 4\tau_\eps\}}\frac{|\tilde u_\eps(x)-\tilde u_\eps(y)|^2}{|x-y|^3} \ dydx\bigg) \\
  &\leq \ \frac{C}{|\ln\eps|}\|\nabla u_{\eps}\|_{L^2(Q_\ell)}^2+C \NPL{\nabla \t{u}_\eps}{1}{B_\rho\BS B_{\rho_j-4\tau_\eps}}+\frac{C}{|\ln\eps|} \int_{N_{\rho,\tau_\eps}}\int_{ |\ln\eps|^{-1}\leq |h|\leq 4\tau_\eps}\frac{1}{|h|^3}\ dh dx\\
  &\leq \ \frac{CK}{|\ln\eps|}+C\NPL{\nabla \t{u}_\eps}{1}{B_\rho\BS B_{\rho_j-4\tau_\eps}} + C\tau_\eps.
\end{align}}
The above expression hence converges to zero by \eqref{eq-annuli}. For the third
integral on the right hand side of \eqref{I-one} we do not have a uniform bound
on $\NPL{\nabla\t v_\eps}{1}{Q_\ell}$. Instead, by \eqref{meps-bound} we have
$\|v_\eps(\cdot)-v_\eps(\cdot-h)\|_{L^2(Q_\ell)}^2\leq
\|v_\eps\|_{L^2(Q_\ell)}^2|h|^2\leq \frac K\eps |h|^2$ and
$\NT{v_\eps}^2 \leq K\eps$. We estimate
\begin{align}
  \hspace{6ex} & \hspace{-6ex} %
                 \frac{1}{|\ln\eps|} \iint_{N_{\rho,\tau_\eps}\times N_{\rho,\tau_\eps}}\frac{|\tilde v_\eps(x)-\tilde v_\eps(y)|^2}{|x-y|^3}dxdy \\ %
               &\leq \ \frac{C}{|\ln\eps|} \bigg( \frac{K}{\epsilon} \int\limits_{0<|h|\leq \eps} \frac 1{|h|} \ dh
                 +   K\eps \int\limits_{\eps<|h|\leq 4\tau_\eps}\frac 1{|h|^3} \ dh + \|\nabla V\|_{L^2}^2 \int\limits_{0<|h|\leq 4\tau_\eps}\frac{1}{|h|} \ dh \bigg)  \\
               &\leq  \ \frac{C}{|\ln\eps|}  \big(K+\|\nabla V\|_{L^2}^2\big) \  \to \ 0 \qquad %
                 \text{as $\eps \to 0$.}
\end{align}
The above estimates show that $I_1 \to 0$ as $\eps \to 0$. Similar
but simpler arguments yield $I_2 + I_3$ $\leq$
$\frac{C}{|\ln\eps|\tau_\eps} (\NNN{\nabla \tilde u_\eps}{L^1(B_{\rho})}+K)$
$\to \ 0$. Together we get $X_\eps^{(1,k)} \to 0$ as $\eps \to 0$.

\medskip

\emph{Estimate of $X_{\eps}^{(2)}$:} Writing
$(1-\eta_j)\sigma_{\eps}= \nabla\cdot ((1-\eta_j)\tilde m_{\eps})+
\nabla\eta_j\cdot \tilde m_{\eps}$ we have
\begin{align}
  \frac{4|\ln\eps|}{\lam} X_{\eps}^{(2)} \ %
  &= \  \int_{Q_\ell}\int_{\R\times  [-\ell, \ell]}\frac{\nabla\cdot ((1-\eta_j)\tilde m_{\eps})(x)\nabla\cdot ((1-\eta_j)\tilde m_{\eps})(x-h)}{|h|} dhdx\\
  &\quad\ + \int_{Q_\ell}\int_{\R\times  [-\ell, \ell]}\frac{\nabla\cdot ((1-\eta_j)\tilde m_{\eps})(x) \nabla\cdot (\eta_j \tilde m_{\eps})(x-h)}{|h|}\ dh dx\\
  &\quad \ + \int_{Q_\ell}\int_{\R\times  [-\ell, \ell]} \frac{(\nabla\eta_j\cdot \tilde m_{\eps})(x) (\nabla\cdot \tilde m_{\eps})(x-h)}{|h|} \ dh dx.
\end{align}
Arguing as for  \eqref{est-ff} we hence get the estimate
\begin{align}
X_{\eps}^{(2)} \ %
&\geq \ \frac{\lam}{4|\ln\eps|}\Big( 2\pi \left\||\nabla|^{-\frac{1}{2}}\nabla\cdot ((1-\eta_j)\tilde m_{\eps})\right\|_{L^2(Q_\ell)}^2 -C(\ell, M) \Big)\\
&\quad\ +\frac{\lam}{4|\ln\eps|}\int_{Q_\ell}\int_{\R\times  [-\ell, \ell]}\frac{\nabla\cdot ((1-\eta_j)\tilde m_{\eps})(x) \nabla\cdot (\eta_j \tilde m_{\eps})(x-h)}{|h|}\ dh dx\\
  &\quad \ +\frac{\lam}{4|\ln\eps|}\int_{Q_\ell}\int_{\R\times  [-\ell, \ell]} \frac{(\nabla\eta_j\cdot \tilde m_{\eps})(x) (\nabla\cdot \tilde m_{\eps})(x-h)}{|h|} \ dh dx \\
  &=: \ R_{\eps}^{(1)} + R_{\eps}^{(2)} + R_{\eps}^{(3)}.
\end{align}
Clearly, we have $\liminf_{\eps \to 0} R_{\eps}^{(1)} \geq 0$. An
integration by parts yields
\begin{align}
  |R_{\eps}^{(3)}| \ %
  \leq \ \frac{\lambda}{|\ln\eps|}\int_{Q_\ell}\int_{\R\times  [-\ell, \ell]} \frac{ |\nabla\eta_j\cdot \tilde m_\eps(x)| |\tilde m_\eps (x-h)-\tilde m_\eps (x)|}{|h|^2} \ dh dx.
\end{align}
Using that $|\nabla\eta_j|\leq \tau_\eps^{-1}$ and that
$\|\nabla \tilde{u}_\eps\|_{L^1(Q_\ell)}+ \eps \|\nabla \tilde
m_\eps\|_{L^2(Q_\ell)}^2 + \eps ^{-1}\|v_\eps \|_{L^2(Q_\ell)}^2 \leq K$ as well
as $\|\nabla V\|_{L^\infty}\leq C$ with a similar but simpler argument as
  for the estimate of $I_1$ we get $|R_{\eps}^{(3)}|$ $\leq$
$C(\ell, M)\lambda( K+1) (|\ln \eps| \tau_\eps)^{-1}$ $\to 0$ as $\eps \to
0$. For $R_{\eps}^{(2)}$, we use the decomposition
\begin{align}
\frac{4|\ln\eps|}{\lambda} R_{\eps}^{(2)}&= \ \int_{Q_\ell}\int_{\R\times  [-\ell, \ell]} \frac{\left((1-\eta_j)\nabla\cdot \tilde m_\eps\right)(x) \left(\eta_j \nabla\cdot \tilde m_\eps \right) (x-h)}{|h|} \ dh dx \\
              &\qquad+ \int_{Q_\ell}\int_{\R\times  [-\ell, \ell]} \frac{\left((1-\eta_j)\nabla\cdot \tilde m_\eps\right)(x) \left(\tilde m_\eps \cdot \nabla\eta_j\right) (x-h)}{|h|} \ dh dx \\
              &\qquad+ \int_{Q_\ell}\int_{\R\times  [-\ell, \ell]} \frac{\left(-\nabla \eta_j \cdot \tilde m_\eps\right)(x) \nabla\cdot (\eta_j \tilde m_\eps ) (x-h)}{|h|} \ dh dx .
\end{align}
The estimate for $R_{\eps_j}^{(2)}$ follows using a similar argument as for
$X_{\eps_j}^{(1,k)}$.  
\end{proof}

\section{Proof of Theorem \ref{thm-gamma} -- \ed{limsup--inequality}} \label{sec-recovery}

In this section, we give the proof of the limsup--inequality in Theorem
  \ref{thm-gamma}. We hence consider
  $m = (\chi_{\Ome_0} - \chi_{\Ome_0^c}) e_1 \in \AA_0$ for some $\Ome_0$ with
  reduced $\SS_m$ and outer normal $n$.

\subsection{Construction of recovery sequence}

In this section, we give the construction of the recovery sequence.  We say
  that $\Ome \SUS Q_\ell$ is a polygonal set, if its boundary is the union of a
  finite number of geodesic (straight line) segments. By the approximation
  results for sets with finite perimeters cf. eg. \cite[Remark
  13.13]{Maggi-Book} and the continuity of the energy with respect to the
  convergence of the total variation measures, it is enough to consider the
  situation when $\Ome_0$ is a polygonal set. By further approximations, we may
  assume that each vertex of $\Ome_0$ is shared by exactly two edges, and that
  each edge has length less or equal to $\frac{\ell}{2}$ (by adding finitely
  many artificial vertices).  The latter ensures that for each such segment
  $\gamma\subset Q_\ell$ and any connected component $\tilde{\gamma}$ in its
  preimage in $\R^2$, the quotient map $\R^2\rightarrow Q_\ell$ restricts to a
  \emph{distance-preserving} bijection $\tilde{\gamma}\rightarrow \gamma$.  In
the supercritical case $\lam > 1$, due to the anisotropic effect of the stray
field (i.e. the stray field energy penalizes those transition layers with
$|n_1|>\lambda^{-\frac 12}$), we can reduce the construction to the case of
polygonal set where the condition $|n_1|\leq \lambda^{-\frac 12}$ is
satisfied:
\begin{lemma}[Modified polygonal set]\label{lem-tOme} %
  Let $\lam>1$. Then for any polygonal set $\Ome_0 \SUS Q_\ell$ there is a
  sequence of polygonal sets $\Ome_0^{(k)} \SUS Q_\ell$ such that
  \begin{enumerate}
\item \label{it-pol-con} $|\Ome_0^{(k)} \Delta \Ome_0| \to 0$ as $k \to \infty$.
  \item\label{it-pol-esam} For $m := (\chi_{\Ome_0}-\chi_{\Ome_0^c},0)$ and
      $m_k := (\chi_{\Ome_0^{(k)}}-\chi_{(\Ome_0^{(k)})^c},0)$, we have
    \begin{align} \label{pol-Eid} %
      E_0[m_k] \
      = \ E_0[m] \qquad \text{for any $k \in \N$.}
    \end{align}
  \item \label{it-pol-normal} For all $k \in \N$, the
    unit normal $n^{(k)}$ of $\Ome^{(k)}_0$ satisfies
    \begin{align}
      \NIL{n^{(k)}\cdot e_1}{\p \Ome_0^{(k)}} \ \leq \ \lam^{-\frac 12}.
    \end{align}
  \item \label{it-pol-further} For all $k \in \N$, each vertex of
      $\Ome^{(k)}_0$ is shared by exactly two edges, and the length of each edge
      is no larger than $\frac{\ell}{2}$.
  \end{enumerate}
\end{lemma}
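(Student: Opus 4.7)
The algebraic identity driving the lemma is that for $\sqrt{\lam}\,s > 1$, the density $f$ from \eqref{def-f} satisfies $f(\sqrt{\lam} s) = 2\sqrt{\lam} s = s\sqrt{\lam}\cdot f(1)$. Consequently, a straight segment of length $L$ with $|n \cdot e_1| = s > \lam^{-1/2}$ has exactly the same $E_0$-contribution as any polygonal curve of total length $L s\sqrt{\lam}$ whose segments all have $|n \cdot e_1| = \lam^{-1/2}$. I will realize this replacement as a fine zigzag.

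Decompose $\p\Ome_0$ into its finitely many edges and leave those with $|n \cdot e_1| \leq \lam^{-1/2}$ unchanged. For each \emph{supercritical} edge $e_i$ from $A_i$ to $B_i$ with outer unit normal $n_i = (n_{i,1}, n_{i,2})$, $s_i := |n_{i,1}| > \lam^{-1/2}$, length $L_i$, and tangent $\tau_i$, I will replace $e_i$ by a zigzag $\gam_i^{(k)}$ from $A_i$ to $B_i$ alternating between two tangent directions $\tau^a, \tau^b$ whose corresponding outer normals are
\begin{equation*}
  n^{a,b} \ = \ \bigl(\sgn(n_{i,1})\lam^{-1/2},\,\pm\sqrt{1-1/\lam}\bigr).
\end{equation*}
Demanding that one period of the zigzag cover displacement parallel to $\tau_i$, i.e.\ $a\tau^a + b\tau^b = \mu(a+b)\tau_i$ for some $\mu>0$, yields (by equating the two coordinates)
\begin{equation*}
  \mu \ = \ (s_i\sqrt{\lam})^{-1}, \qquad\qquad
  \frac{a-b}{a+b} \ = \ \frac{n_{i,2}\,\sgn(n_{i,1})}{s_i\sqrt{\lam-1}}.
\end{equation*}
The latter has modulus strictly less than $1$ precisely because $s_i > \lam^{-1/2}$, so admissible segment lengths $a,b>0$ exist. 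Scaling to fit $k$ periods along $e_i$ confines $\gam_i^{(k)}$ to a strip of width $O(1/k)$ around $e_i$. Finally, subdivide any resulting edge longer than $\ell/2$ by adding artificial vertices.

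Verification is then straightforward: \ref{it-pol-normal} is immediate from the construction; \ref{it-pol-con} follows since the strip width tends to $0$; \ref{it-pol-further} holds because each vertex of $\Ome_0^{(k)}$ (a zigzag kink or an original vertex) is shared by exactly two edges after subdivision. For \ref{it-pol-esam}, the zigzag $\gam_i^{(k)}$ has total length $L_i/\mu = L_i s_i\sqrt{\lam}$, and each of its segments has density $2f(1) = 4$ in $E_0$, so it contributes $4 L_i s_i\sqrt{\lam}$; this matches the contribution $2f(\sqrt{\lam}s_i)L_i = 4\sqrt{\lam}s_i L_i$ of the original $e_i$. Unchanged subcritical edges contribute identically to $E_0[m]$ and $E_0[m_k]$, so $E_0[m_k] = E_0[m]$ holds as an exact equality.

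The main obstacle is the geometric bookkeeping: I must ensure that for all sufficiently large $k$, the modified curves actually form the boundary of a genuine polygonal set with the same orientation as $\Ome_0$. The orientation is inherited because both zigzag normals $n^{a,b}$ share the $e_1$-sign of $n_i$, so the zigzag bounds $\Ome_0^{(k)}$ on the same side of $e_i$ as $\Ome_0$. No collisions occur (between different zigzags, or between zigzags and subcritical edges) for $k$ large, since the finite polygon $\Ome_0$ has a strictly positive minimum pairwise distance between non-adjacent edges, eventually exceeding the $O(1/k)$ strip width. Finally, the boundary condition $m_k = \pm e_1$ on $|x_1|>1$ persists because $\p\Ome_0 \subset \{|x_1|\leq 1\}$ and each zigzag remains in an $O(1/k)$ neighborhood of the edge it replaces.
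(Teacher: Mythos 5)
Your proposal is correct and takes essentially the same route as the paper: each supercritical edge is replaced by a fine zigzag whose two alternating normals are $(\pm\lam^{-1/2}\text{-type})$ vectors $n_\pm=(\sgn(n_1)\lam^{-1/2},\pm\sqrt{1-\lam^{-1}})$, with the exact energy equality coming from $2f(1)\cdot s\sqrt{\lam}=2f(\sqrt{\lam}s)$ for $s>\lam^{-1/2}$. Your period-averaging computation of $\mu$ and $(a-b)/(a+b)$ is equivalent to the paper's decomposition $n=\lam_+ n_+ + \lam_- n_-$ with $\lam_\pm>0$, and your extra bookkeeping for items (i), (iii), (iv) matches (indeed slightly elaborates) what the paper states.
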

\begin{proof}
  Let $c_* :=\lam^{-\frac 12} \in (0,1)$.  Let $J$ be an edge of $\Ome_0$ with
  normal $n:=(n_1,n_2)$ such that $|n_1|>c_*$. Due to the symmetry we can assume
  without loss of generality that $n_1 > 0$ and $n_2 \geq 0$. We consider a
  sequence of zigzag lines $\ZZ^{(k)}$, $k\in\N$, such that $\ZZ^{(k)}$ connects
  the two end points of the edge $J$ and consists of $2j$ line segments with
  alternating outer unit normal $n_{ \pm}^{(k)}:=(c_*, \pm
  \sqrt{1-c_*^2})$. Moreover, the zigzags of $\ZZ^{(k)}$ have equal length and
  are chosen such that $\ZZ^{(k)}$ does not intersect with any other edge of the
  polygonal set or zigzag lines for $k$ sufficiently large. We
  now replace each edge with normal $n:=(n_1,n_2)$ such that $|n_1|>c_*$ by a
  sequence of zigzag lines as described above.  This defines a sequence of sets
  $\Ome_0^{(k)}$ such that \ref{it-pol-normal} and \ref{it-pol-con} hold.

  \medskip
  
  \ref{it-pol-esam}: It is enough to consider the line energy of 
  $\ZZ^{(k)}$ compared to the line energy of the edge $J$ it replaces (as
    described above). More precisely, for each edge $J$ of $\Ome_0$ with
  $|n_1|>c_*$ we consider the sequence of zigzag lines $\ZZ^{(k)}$, constructed
  as above. We then need to show that
  \begin{align} \label{ab-eq} %
    \int_{\ZZ^{(k)}} \big(1+\lam|(n_\pm^{(k)})_1|^2\big) \ d\HH^1 \ %
    = \ 2\int_{J}\sqrt{\lam}|n_1| \ d\HH^1.
  \end{align} 
  First we note that the total length $\HH^1(\ZZ^{(k)})$ of the zigzag line is determined
  uniquely and is independent of $k$. Indeed, let $\lam_+$ and $\lam_-$ be such
  that $n=\lam_+ n_+^{(k)} + \lam_- n_-^{(k)}$. Direct computation gives that
  \begin{align}
    \lam_+ \ = \ \frac 12\Big(\frac{n_1}{c_*}+\frac{n_2}{\sqrt{1-c_*^2}}\Big), \quad \lam_-=\frac 12\Big(\frac{n_1}{c_*}-\frac{n_2}{\sqrt{1-c_*^2}}\Big).
  \end{align}
  By our assumption $n_1>c_*$ and $\lam_\pm > 0$. Then the total
  length of the edges with normal $n_\pm^{(k)}$ is
  $\lam_\pm \HH^1(J)$.

  \medskip
  
  To prove \eqref{ab-eq}, we notice that, after plugging in the values of $\lam_\pm$ and
  $c_*$, both sides of \eqref{ab-eq} have the same value
  $\left(1+\lam|c_*|^2\right) (\lam_++\lam_-)\HH^1(J)$.
\end{proof}
For the construction of the recovery sequence, we assume that $\Ome_0$ is a
polygonal set of the form in Lemma \ref{lem-tOme}. The recovery sequence is
constructed by patching together rescaled versions of one--dimensional
transition layers along the edges of the polygonal set $\Ome_0$:
\begin{definition}[Construction of recovery sequence] \label{def-meps} %
  Let $m = (\chi_{\Ome_0}-\chi_{\Ome_0^c},0)$, where $\Ome_0$ is a
  polygonal set with normal $n$ such that 
  $\NIL{n_1}{\p \Ome_0} \leq \lam^{-\frac 12}$, and each vertex of $\Ome_0$ is shared by exactly two edges and the length of each edge is no larger than $\frac{\ell}{2}$.
  \begin{enumerate}
  \item For $\eps$ sufficiently small depending on $\Omega_0$ and with the
      notation $\bet_\eps := \eps^{\frac{5}{6}}$, we define the
      regularized set $\Ome_\eps$ with boundary $\gam_\eps := \p \Ome_\eps$ and
      outer unit normal $n_\eps$ by
      \begin{align}
              \Ome_\eps := \Ome_{2\bet_\eps}^{i} \cup (\Ome_0^c \BS \Ome_{2\bet_\eps}^a)^o
      \end{align}
      $\Ome_{2\bet_\eps}^i := \bigcup \big \{ B_{2\bet_\eps} : B_{2\bet_\eps}
      \SUS \Ome_0 \big \}$ and %
      $\Ome_{2\bet_\eps}^a := \bigcup \big \{ B_{2\bet_\eps} : B_{2\bet_\eps}
      \SUS \Ome_0^c \big \}$.  Here, the union is taken over all balls with
      radius $2\bet_\eps$, included in $\Ome_0$ (resp. $\Ome_0^c$).
    \item By construction, in the $\bet_\eps$--neighborhood
      $\mathcal N_\eps := \mathcal{N}_{\bet_\eps}(\gam_\eps)$ of $\gam_\eps$
      one has the tubular coordinates $x =(\sigma,t)$, where $\sigma\in \gam_\eps$
      is the projection of $x$ onto $\gam_\eps$ and
      $t=(x-\sig)\cdot n_\eps\in (- \bet_\eps, \bet_\eps)$. We write
      $D_{\eps}^+ \ := \ \Ome_\eps \BS \mathcal N_\eps$ and %
      $D_{\eps}^- \ := \ \Ome_\eps^c \BS \mathcal N_\eps$, this induces the
      decomposition $Q_\ell=\mathcal N_\eps \cup D_{\eps}^+\cup D_{\eps}^-$.
  \item       We define $m_\eps:=(u_\eps,v_\eps)$ by  $v_\eps:=\sqrt{1-u_\eps^2}$ and 
    \begin{equation}\label{eq-construction-u}
      u_\eps(x) \ := \ %
      \begin{TC}
        \displaystyle \sin\bigg(\frac{\pi}{2}\frac{\arcsin(\tanh \frac t\eps )}{\arcsin(\tanh
            \frac {\bet_\eps}\eps )}\bigg) %
        \qquad\qquad &\text{for $x = (\sig,t) \in \mathcal N_\eps$,} \vspace{0.5ex}\\
        \pm 1 &\text{for $x \in D_{\eps}^\pm$}.
      \end{TC}
    \end{equation}
  \end{enumerate}
  \end{definition}
  In the following, we will show how this construction yields the limsup
    inequality in our $\Gamma$-convergence result. We remark that the precise
    choice of $\bet_\eps$ above is not essential as long as
    $\eps\ll \beta_\eps \leq C \eps^{\frac{2}{3}}$.  By construction of
    $\Ome_\eps$, all corners of $\Ome_0$ have been replaced by arc segments with
    curvature $\frac 1{2\bet_\eps}$.

\subsection{Estimate for recovery sequence}

We first give estimates for the one-dimensional transition layer, given in
Definition \ref{def-meps}. The one-dimensional transition layer is given by a
standard Ginzburg-Landau type profile. For a similar construction in the context
of micromagnetics, we refer e.g. to \cite[Lemma
4.2]{KnuepferMuratovNolte-2019}. We remark the scales of the transition
  layers below: The parameter $\eps$ captures the lengthscale where most of the
  transition takes place, the parameter $\bet \geq \eps$ captures the total
  width of the transition layer between the two regions $\t u = \pm 1$.
\begin{lemma}[One--dimensional transition layer]\label{lem-nonlupper} %
  For $\bet\in (0,1]$ and $\eps\in (0, \bet)$, let
  \begin{align}
    \t u_\eps(t) \ := \ %
    \begin{TC}
      \displaystyle \sin\left(\frac{\pi}{2}\frac{\arcsin(\tanh \frac t\eps )}{\arcsin(\tanh \frac \bet\eps )}\right) \qquad &\text{for } |t|\leq \bet, \vspace{0.5ex}\\
      \pm 1 &\text{for $\pm t\geq \bet$ }
    \end{TC}
  \end{align}
  and $\t v_\eps(t) := \sqrt{1-\t u_\eps^2(t)}$.  Then for any $R\geq \bet$ and
  for universal $C, c_0 > 0$, we have
  \begin{enumerate}
  \item\label{it-local-u}
    $\displaystyle \frac 12 \int_ \R \eps \big[|\t u'_\eps|^2 + |\t v'_\eps|^2 \big] + \frac 1{\eps} \t v_\eps^2   \  dt \ %
    \leq \ 2 + Ce^{-\frac{c_0\bet}\eps}, $
  \item\label{it-1d-u}
    $\displaystyle \frac 12 \int_{-R}^R \int_{-R}^R
    \frac{|\t u_\eps(t)-\t u_\eps(s)|^2}{|t-s|^2} \ dtds \ %
    \leq \ 4 \ln \left(1 + \frac R\eps\right)+C$,
  \item\label{it-logprof}
    $\displaystyle\int_{ \R} \int_{\R} \t u'_\eps(t)\t u'_\eps(s)\ln \frac
    1{|t-s|} \ dt ds \ %
    \leq \ 4 \ln\Big(\frac 1\eps + \frac 1\bet \Big) +C$, 
  \item\label{it-1d-v}
    $\displaystyle \int_{-R}^R \int_{-R}^R \frac{|\t v_\eps(t)-\t
      v_\eps(s)|^2}{|t-s|^2} \ dtds %
    + \int_{\R} \int_{\R} \t v'_\eps(t)\t v'_\eps(s)\ln \frac 1{|t-s|} \ dt ds
    \ %
    \leq \ C$.
  \end{enumerate}
\end{lemma}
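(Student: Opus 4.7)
The key is to exploit the explicit structure of the profile. Setting $\theta(t) := \arcsin(\tanh(t/\eps))$ and $\theta_\bet := \arcsin(\tanh(\bet/\eps))$, so that $\tilde u_\eps = \sin(\pi\theta/(2\theta_\bet))$, the identity $\cos(\arcsin(\tanh\xi)) = \text{sech}\,\xi$ gives $\theta'(t) = (\eps\cosh(t/\eps))^{-1}$, $\tilde v_\eps = \cos(\pi\theta/(2\theta_\bet))$, and the clean relation
\[
|\tilde u_\eps'|^2 + |\tilde v_\eps'|^2 \ = \ \frac{\pi^2}{4\theta_\bet^2\,\eps^2\,\cosh^2(t/\eps)}.
\]
The second basic observation is that $\cos\theta_\bet = \text{sech}(\bet/\eps)$, so $\theta_\bet = \pi/2 - O(e^{-\bet/\eps})$ and $\pi/(2\theta_\bet) = 1 + O(e^{-\bet/\eps})$; in particular our profile is an exponentially close perturbation of the standard Ginzburg--Landau kink $\tanh(t/\eps)$. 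Throughout, the substitution $\tau = t/\eps$ will reduce each estimate to a $\bet/\eps$-dependent problem on the rescaled line.

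\textbf{Step 1, estimate (i).} After changing variables $s = t/\eps$, the left-hand side of (i) becomes $\frac{1}{2}\int_{-\bet/\eps}^{\bet/\eps}\bigl[\frac{\pi^2}{4\theta_\bet^2}\text{sech}^2 s + \cos^2(\pi\theta(s)/(2\theta_\bet))\bigr]\,ds$. In the limit $\theta_\bet = \pi/2$ the integrand collapses to $2\,\text{sech}^2 s$, whose integral over $\R$ equals exactly $2$. The deviation of $\theta_\bet$ from $\pi/2$, combined with the truncation at $|s| \leq \bet/\eps$, produces only an error of order $e^{-c_0\bet/\eps}$ by Taylor expansion around $\pi/(2\theta_\bet) = 1$ together with the exponential tail bound for $\text{sech}^2$.

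\textbf{Step 2, estimates (ii) and (iii).} Rescale $t = \eps\tau$, $s = \eps\sigma$. For (ii) the integral becomes a scale-invariant double integral of $u_0(\tau) := \tilde u_\eps(\eps\tau)$ over $(-R/\eps,R/\eps)^2$, with $u_0(\pm\bet/\eps) = \pm 1$ and $u_0$ uniformly close to $\tanh$. I will split the region into the near-diagonal part $\{|\tau-\sigma| \leq 1\}$ (where the Lipschitz bound makes the integrand bounded), the same-sign far-diagonal part (where $u_0$ is exponentially close to $\pm 1$, so the difference is exponentially small), and the opposite-sign far-diagonal part (where $|u_0(\tau)-u_0(\sigma)| \to 2$ and the direct computation $\int_1^{R/\eps}\!\!\int_{-R/\eps}^{-1}4/(\tau-\sigma)^2\,d\sigma d\tau$ yields $4\ln(R/\eps)+O(1)$). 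For (iii) the same rescaling produces $(\int u_0')^2\ln(1/\eps) + \int\!\!\int u_0'(\tau)u_0'(\sigma)\ln(1/|\tau-\sigma|)\,d\tau d\sigma$. Since $\int u_0' = u_0(\bet/\eps) - u_0(-\bet/\eps) = 2$, the first term is $4\ln(1/\eps) \leq 4\ln(1/\eps + 1/\bet)$; the second is uniformly bounded in $\bet/\eps$ via the pointwise estimate $|u_0'(\tau)| \leq C\,\text{sech}^2\tau$, which dominates both the local logarithmic singularity and the slow growth of $\ln(1/|\tau-\sigma|)$ at infinity.

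\textbf{Step 3, estimate (iv); main obstacle.} The same rescaling applies, but now $v_0(\tau) := \tilde v_\eps(\eps\tau)$ satisfies $v_0(\pm\bet/\eps) = 0$, so $\int v_0' = 0$ and the $\ln(1/\eps)$ contribution in the logarithmic potential \emph{vanishes}; only the scale-invariant kernel integral survives, controlled via $|v_0'(\tau)| \leq C\,\text{sech}\,\tau$. The same exponential bound handles the $H^{1/2}$-type term in (iv) after splitting into near- and far-diagonal regions as in (ii). The main subtlety throughout is constant-tracking: (i) requires an error only exponentially small in $\bet/\eps$, so the replacement of $\theta_\bet$ by $\pi/2$ must be carried out explicitly rather than via compactness; (ii) and (iii) demand the exact leading coefficient $4$, which forces the opposite-sign far-diagonal computation in (ii) and use of the identity $(\int u_0')^2 = 4$ in (iii); and all remainders must be bounded uniformly in $\bet/\eps \in [1,\infty)$, which is the key uniform estimate behind the whole argument.
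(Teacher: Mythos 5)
Your proposal is correct in substance, but it takes a partly different route from the paper's proof, and in places a cleaner one. For (i) you compute directly, using the identity $|\tilde u_\eps'|^2+|\tilde v_\eps'|^2=\tfrac{\pi^2}{4\theta_\bet^2\eps^2}\cosh^{-2}(t/\eps)$ and the exponential closeness of $\theta_\bet$ to $\pi/2$, whereas the paper simply cites \cite[Lemma 4.2]{KnuepferMuratovNolte-2019}; your argument is self-contained and the error bookkeeping ($|\tfrac{\pi}{2\theta_\bet}-1|\le Ce^{-\bet/\eps}$ plus the $\mathrm{sech}^2$ tails) does give the claimed $Ce^{-c_0\bet/\eps}$. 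For (ii) both arguments are essentially the same: the crude bound $|\tilde u_\eps(t)-\tilde u_\eps(s)|^2\le 4$ on the sign-change region produces the $4\ln(1+R/\eps)$, and the rest is controlled by exponential decay. The genuine divergence is in (iii) and (iv): the paper integrates by parts twice, reducing the log-kernel integral to the $H^{1/2}$-type integral of (ii) with $R=\bet$ plus boundary terms $B_1,B_2,B_3$ (with $B_3=(\tilde u_\eps(\bet)-\tilde u_\eps(-\bet))^2\ln\tfrac1{2\bet}$ supplying the $\tfrac1\bet$ inside the log), while you rescale and split $\ln\tfrac1{|t-s|}=\ln\tfrac1\eps+\ln\tfrac1{|\tau-\sigma|}$, use $(\int u_0')^2=4$ for the $\ln\tfrac1\eps$ term and $|u_0'(\tau)|\le C\,\mathrm{sech}^2\tau$ for the remainder; this avoids all boundary-term bookkeeping, yields the slightly sharper bound $4\ln\tfrac1\eps+C$, and in (iv) the observation $\int v_0'=0$ kills the logarithm at once — a neat alternative to the paper's second integration by parts. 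One slip you should repair: in (ii) you justify the near-diagonal region $\{|\tau-\sigma|\le1\}$ only by "the Lipschitz bound makes the integrand bounded"; a bounded integrand over that strip of measure $\sim R/\eps$ gives $O(R/\eps)$, not the required universal constant. You need there exactly the decay $|u_0'(\tau)|\le C\,\mathrm{sech}^2\tau$ that you invoke for (iii), which bounds the near-diagonal integrand by $Ce^{-c(|\tau|-1)}$ and makes that contribution $O(1)$ uniformly in $R,\eps,\bet$; with that correction the whole argument goes through.
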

\begin{proof}
  We first note that for universal  constants $C, c_0 > 0$, we have
    \begin{align}  %
      |\text{sgn}(t)-\t u_\eps(t)| 
      + |\t v_{ \eps}(t)| \ %
      + \eps |\t u'_\eps(t)|+ \eps |\t v'_\eps(t)|\ &\leq \ Ce^{-\frac{c_0|t|}\eps}\qquad \forall t \in \R. \label{12-exp-bound}
    \end{align}
    We next turn to the proof of the estimates:

  \medskip
  
  \textit{(i):} This follows directly from \ed{\cite[Lemma 4.2]{KnuepferMuratovNolte-2019}}.

  \medskip

  \textit{(ii):} Using that $\NI{\t u} \leq 1$, we first calculate
  \begin{align}
    \int_{-R}^0\int_{\eps}^R\frac{|\t u_\eps(t)-\t u_\eps(s)|^2}{|t-s|^2} \  dtds\ %
   \leq \ \int_{-R}^0\int_{\eps}^R \frac{4}{|t-s|^2} \ dtds \  %
    = \ 4 \ln\Big(1 + \frac R\eps \Big) + 4  \ln \frac 12.
  \end{align}
  By the change of variables $t\mapsto \frac t\eps$ and
    $s\mapsto \frac s\eps$ and due to \eqref{12-exp-bound}, the corresponding integral over
  the set $(-\eps,\eps)^2$ is estimated by a universal constant. By the
    exponential decay \eqref{12-exp-bound}, the integral for the remaining region
    $(t,s) \in (0,R)\times (-\eps,R) \BS (0,\eps) \times (-\eps,\eps)$ is
    estimated by a universal constant. Since the integral is symmetric in $t,s$, the integrals with $t,s$ exchanged yield
    the same terms again.

  \medskip
  
  \textit{\ref{it-logprof}:}  Since $\t{u}'_\eps(t)=0$ for $|t|\geq \beta$ and integrating by parts in $t$
  and $s$, we obtain
  \begin{align}
    \int_{\R}\int_{\R} \t u'_\eps(t)\t u'_\eps(s)\ln \frac 1{|t-s|} \ dt ds \ %
    = \ \frac 12\int_{- \bet}^{\bet} \int_{- \bet}^{\bet} \frac{|\t u_\eps(t)-\t u_\eps(s)|^2}{|t-s|^2} dt ds  + B.
  \end{align}
  The boundary term $B$  from the integration by parts is given by $B = B_1 + B_2$ where 
  \begin{align}
    B_1\ &= \ %
          \frac 12 \int_{-\bet}^{\bet}\frac{(\t u_\eps(-\bet)-\t u_\eps(s))^2}{\bet+s} \ ds + \frac 12 \int_{-\bet}^{\bet}\frac{(\t u_\eps(\bet)-\t u_\eps(s))^2}{\bet-s} \ ds, \\ %
    B_2 \ &= \ \int_{-\bet}^{\bet} (\t u_\eps(\bet)-\t u_\eps(t)) \t u_\eps'(t) \ln \frac 1{\bet-t} \ dt %
         - \int_{-\bet}^{\bet} (\t u_\eps(-\bet)-\t u_\eps(t)) \t u_\eps'(t) \ln \frac 1{\bet+t} \ dt.
  \end{align}
  Integrating by parts again, we get $B_2 = B_1 + B_3$, where
  $B_3 = (\t u_\eps(\bet) - \t u_\eps(-\bet))^2 \ln \frac 1{2\bet}$.  We note
  that from
  $|\t u_\eps(\bet)-\t u_\eps(s)|\leq
  \frac{C|\bet-s|}{\eps}e^{-\frac{c\bet}{\eps}}$ for $s\in (\bet/2,\bet)$,
  which follows from \eqref{12-exp-bound}, and by symmetry we get
  $|B_1| \leq C(\frac{\bet}{\eps})^2 e^{-\frac{c\bet}{\eps}}+C\leq C$ for some
  universal $c,C < \infty$. Since also $|B_3| \leq 4 \ln \frac 1{2\bet}$, the
  estimate \ref{it-logprof} follows from the above estimates together with \ref{it-1d-u}.

\medskip

\medskip

\textit{\ref{it-1d-v}:} The estimate of the first integral follows by changing
variables $t\mapsto \frac t\eps$, $s\mapsto \frac s\eps$ together with
  \eqref{12-exp-bound}. Since $\t{v}(t)=\t{v}'_\eps(t)=0$ for $|t|\geq \beta$
  and integrating by parts as before we get
\begin{align}
\int_{\R} \int_{\R} \t v'_\eps(t)\t v'_\eps(s)\ln \frac 1{|t-s|} \ dt ds \ %
    = \ \frac 12\int_{-\bet}^{\bet}\int_{-\bet}^{\bet}\frac{|\t v_\eps(t)-\t v_\eps(s)|^2}{|t-s|^2} dt ds  + 2\t B_1,
  \end{align}
where 
\begin{align}
  \t B_1 \ = \ \frac 12 \int_{-\bet}^{\bet}\frac{|\t v_\eps(s)|^2}{\bet+s} ds + \frac 12 \int_{-\bet}^{\bet}\frac{|\t v_\eps(s)|^2}{\bet-s} ds.
\end{align}
Since $\t v_\eps$ is even and
$|\t v_\eps(\beta)-\t
v_\eps(s)|\leq\frac{C|\bet-s|}{\eps}e^{-\frac{c\bet}{\eps}}$ for
$s\in (\frac \bet 2,\bet)$, which follows from \eqref{12-exp-bound}, we
have that $|\t B_1|\leq C$. Together with the above estimate this yields the
estimate for the second integral in \ref{it-1d-v}.
\end{proof}
The next lemma is concerned about the self-interaction energy over
$\mathcal{N}_\eps$:
\begin{lemma}[Recovery sequence -- nonlocal terms]\label{lem-self-inter}
  Let $\Ome_0 \SUS Q_\ell$ be a polygonal set which satisfies
      the assumptions of Definition \ref{def-meps}.  Let
    $m_\eps\to m = (\chi_{\Ome_0} - \chi_{\Ome_0^c}) e_1$ be the sequence from
    Definition \ref{def-meps} and let $\sig_\eps:=\nabla\cdot (m_\eps-M)$.  Then
    there is $\eps_0(\Ome_0) >0 $ and $C=C(\Omega_0,\ell, M)$ such that for
  $0<\eps \leq \eps_0(\Ome_0)$, we have
  \begin{align}
    \frac{\pi}{2|\ln\eps|}\int_{Q_\ell} \big| |\nabla|^{-\frac 12} \sig_\eps|^2 \  dx \ %
    \leq \ 2 \int_{\p \Omega_0} |n \cdot e_1|^2 \ d\HH^1 + \frac {C}{|\ln\eps|^{\frac 12}}.
  \end{align}
\end{lemma}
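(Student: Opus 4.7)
The plan is to rewrite the nonlocal norm as a singular double integral via Lemma \ref{lem-sing_H12}, decompose $\p\Omega_0$ into its finite collection of straight edges $J_1,\ldots,J_K$ with lengths $L_k$ and normals $n^{(k)}$, and on each edge reduce to the one-dimensional estimates of Lemma \ref{lem-nonlupper}. I will show that the self-interaction contribution localized to $J_k$ yields in leading order $2 L_k |n^{(k)}\cdot e_1|^2$, cross-edge interactions are $O(1)$ uniformly in $\eps$, and contributions from the smoothed corners and from the background $M$ are negligible. After dividing by $|\ln\eps|$ and summing, this yields the desired bound.

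\textbf{Reduction and localization.} Using Lemma \ref{lem-sing_H12} and an estimate analogous to \eqref{est-ff} for the long-range periodic part (which costs only a $C(\ell,M)/|\ln\eps|$), the problem reduces to controlling $\frac{1}{|\ln\eps|}\int_{Q_\ell}\int_{\R\times[-\ell,\ell]}\sigma_\eps(x)\sigma_\eps(x-h)/|h|\,dhdx$. Since $\sigma_\eps$ is supported in the tubular neighborhood $\mathcal N_\eps$ of $\gam_\eps$ (up to the compactly supported bounded contribution from $M$, which produces at worst a $C/|\ln\eps|$ error by Cauchy--Schwarz and the $L^\infty$ bound on $\nabla\cdot M$), I cover $\mathcal N_\eps$ by straight-edge tubes $T_k=\{\dist(\cdot,J_k)<\bet_\eps\}$ and disks of radius $O(\bet_\eps)$ around each vertex of $\Omega_0$. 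By construction of $\Omega_\eps$, on each straight edge the charge density in tubular coordinates $(\sig,t)\in J_k\times(-\bet_\eps,\bet_\eps)$ is exactly $\tilde\sigma_\eps(t)=n_1^{(k)}u_\eps'(t)+n_2^{(k)}v_\eps'(t)$. The corner pieces have area $O(\bet_\eps^2)$ with $|\sigma_\eps|\le C/\eps$, contributing $O(\bet_\eps^4/\eps^2)\cdot O(\bet_\eps^{-1}|\ln(\bet_\eps/\eps)|)=O(\eps^{1/3}|\ln\eps|)$ to the singular integral, negligible after dividing by $|\ln\eps|$. The pairwise cross-interactions $\iint_{T_k\times T_l}$ with $k\neq l$ are bounded uniformly in $\eps$, since the tubes lie at fixed positive distance and each carries total charge $O(L_k)$, again negligible after division.

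\textbf{Per-edge estimate.} The self-interaction on $J_k$ is
\begin{align*}
I_k \ &= \ \int_0^{L_k}\!\int_0^{L_k}\!\int_{-\bet_\eps}^{\bet_\eps}\!\int_{-\bet_\eps}^{\bet_\eps}\frac{\tilde\sigma_\eps(t)\tilde\sigma_\eps(s)}{\sqrt{(\sig-\sig')^2+(t-s)^2}}\,dt\,ds\,d\sig\,d\sig'.
\end{align*}
Integrating the $\sig,\sig'$ variables first, for fixed $a=t-s$ with $|a|\le 2\bet_\eps\ll L_k$ one computes
$\int_0^{L_k}\!\int_0^{L_k}(\sqrt{(\sig-\sig')^2+a^2})^{-1}\,d\sig d\sig'=2L_k\ln(L_k/|a|)+O(|a|)$.
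Substituting yields
\begin{align*}
I_k \ &= \ 2L_k \int\!\!\int \tilde\sigma_\eps(t)\tilde\sigma_\eps(s)\ln\frac{1}{|t-s|}\,dt\,ds+2L_k\ln L_k\Big(\int\tilde\sigma_\eps\Big)^2 + O(\bet_\eps).
\end{align*}
Since $\int u_\eps'\,dt=2$ and $\int v_\eps'\,dt=0$, we have $\int\tilde\sigma_\eps=2n_1^{(k)}$, so the second term is $8L_k|n_1^{(k)}|^2\ln L_k=O(1)$. Expanding $\tilde\sigma_\eps\tilde\sigma_\eps$ as the sum of $(n_1^{(k)})^2 u_\eps' u_\eps'$, $2n_1^{(k)}n_2^{(k)}u_\eps'v_\eps'$, and $(n_2^{(k)})^2 v_\eps'v_\eps'$, Lemma \ref{lem-nonlupper}\ref{it-logprof} controls the first by $4\ln(1/\eps)+C$, Lemma \ref{lem-nonlupper}\ref{it-1d-v} controls the third by $C$, and the cross term is handled by Cauchy--Schwarz. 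This gives $I_k \le 8L_k|n_1^{(k)}|^2\ln(1/\eps)+C(\Omega_0)$. Summing over $k$ and multiplying by $\pi/(2|\ln\eps|)$ together with the normalization constant $1/(2\pi)$ from Lemma \ref{lem-sing_H12}, i.e.\ $1/(4|\ln\eps|)$, produces $2\sum_k L_k|n_1^{(k)}|^2 = 2\int_{\p\Omega_0}|n\cdot e_1|^2\,d\HH^1$ plus remainders of size $C/|\ln\eps|^{1/2}$.

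\textbf{Main obstacle.} The delicate point is identifying the sharp leading constant: the longitudinal integration against $1/|x-y|$ produces exactly the prefactor $2L_k$, and pairing it with the factor $4\ln(1/\eps)$ from Lemma \ref{lem-nonlupper}\ref{it-logprof} after cancellation with $\pi/(2|\ln\eps|)\cdot 1/(2\pi)$ yields precisely the coefficient $2$; the anisotropy $|n\cdot e_1|^2$ arises because only the $u_\eps'$ piece of $\tilde\sigma_\eps$ produces a logarithmically divergent self-interaction, while $v_\eps'$ stays $O(1)$ thanks to \ref{it-1d-v}. The scale $\bet_\eps=\eps^{5/6}$ (with $\eps\ll\bet_\eps\ll\eps^{2/3}$) is chosen precisely so that corner contributions, the $\ln L_k$ boundary correction, and the error in replacing $m_\eps-M$ by $m_\eps$ on each tube all fit inside the allowed error $C/|\ln\eps|^{1/2}$.
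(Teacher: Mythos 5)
Your overall route is the same as the paper's: reduce via Lemma \ref{lem-sing_H12} and the far-field bound \eqref{est-ff} to the truncated kernel $1/|h|$, localize the charge to edge tubes and corner regions of the rounded set $\Ome_\eps$, integrate out the longitudinal variables on each edge to produce a logarithmic kernel in the transverse variables, and invoke Lemma \ref{lem-nonlupper}\ref{it-logprof}--\ref{it-1d-v}; your constant bookkeeping (per edge $2L_k\cdot 4|\ln\eps|$ against the prefactor $\tfrac{1}{4|\ln\eps|}$ gives $2L_k|n_1^{(k)}|^2$) is correct. The one structural difference is the order of splitting: the paper separates $\p_1 u_\eps$ from $\p_2 v_\eps$ by a single Cauchy--Schwarz at the level of the global quadratic form before localizing (so no cross term ever appears on an edge), whereas you keep the full edge density $n_1^{(k)}\t u'_\eps + n_2^{(k)}\t v'_\eps$ and split after the longitudinal integration. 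Two of your steps, however, are justified by reasons that do not hold as stated.

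First, the cross-edge interactions: adjacent edge tubes are \emph{not} at fixed positive distance --- near each rounded vertex they approach to distance of order $\bet_\eps$, and the bound ``total charge times $1/\mathrm{dist}$'' would then give $O(\bet_\eps^{-1})$, not $O(1)$. This is precisely where the paper does real work: it straightens one of the two tubes by a map $T$ with $|x-y|\geq c(\alp_k)|T(x)-T(y)|$, uses that the transverse charge per slice has mass $O(1)$, and reduces to the convergent integral $\iint (x_2+y_2)^{-1}\,dx_2\,dy_2$ (cf. \eqref{bin-rhs}), while corner--edge pairs are handled by Cauchy--Schwarz between self-energies; only non-adjacent pairs are trivially $O(1)$. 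Second, your cross term $n_1^{(k)}n_2^{(k)}\iint \t u'_\eps(t)\,\t v'_\eps(s)\ln\frac{1}{|t-s|}\,dt\,ds$ ``handled by Cauchy--Schwarz'': this requires the logarithmic kernel to define a positive semidefinite form on functions supported in $(-\bet_\eps,\bet_\eps)$ including the non-neutral charge $\t u'_\eps$ (note $\int \t u'_\eps=2$). That is true --- the kernel is positive definite on sets of logarithmic capacity below one, and the interval has capacity $\bet_\eps/2$ --- but it is a genuine fact you must invoke or replace; a crude absolute-value bound gives $O(|\ln\eps|)$, the same order as the main term, so the step cannot simply be waved through. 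With these two points repaired (most easily by adopting the paper's global splitting of $\p_1 u_\eps$ and $\p_2 v_\eps$, which eliminates the cross term, and its adjacent-edge estimate), your argument goes through and yields the stated bound.
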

\begin{proof}
  In view of Lemma \ref{lem-sing_H12}, we need to show
  \begin{align}
    \lim_{N \to \infty} \int_{Q_\ell} \int_{\R\times [-N\ell,N\ell]} \frac{\sig_\eps(x-h)\sigma_\eps(x)}{|h|}\ dh dx\ %
    \leq \ 8 |\ln \eps| \int_{\p \Omega_0} |n \cdot e_1|^2 \ d\HH^1 + C(\Ome_0,\ell,M) |\ln\eps|^{\frac 12}.
  \end{align}
  First by \eqref{est-ff} the far-field interaction satisfies
\begin{align}  %
  \hspace{6ex} & \hspace{-6ex} %
                 \lim_{N \to \infty} \int_{Q_\ell} \int_{\R\times ([-N\ell,N\ell] \BS [-\ell,\ell])}  \frac{\sig_\eps(x-h)\sig_\eps(x)}{|h|} \ dh dx \leq \ C(\ell, M).
\end{align}
 We also note that
\begin{align}
  \int_{Q_\ell}\int_{\R\times [-\ell,\ell]}\frac{\nabla\cdot M(x-h) \nabla\cdot M(x)}{|h|} \ dh dx \ %
  \leq \ C(\ell, \|DM\|_{L^\infty})
\end{align}
by an application of Cauchy-Schwarz. It is hence enough to show that
\begin{align}
  \int_{Q_\ell} \int_{\R\times [-\ell,\ell]}  \frac{\sig_\eps(x-h)\sig_\eps(x)}{|h|} \ dh dx \ %
  \leq \ 8|\ln\eps| \int_{\p \Omega_0} |n \cdot e_1|^2 \ d\HH^1 + C(\Ome_0,\ell,M) |\ln\eps|^{\frac 12}.
\end{align}
Again by an application of Cauchy-Schwarz, it is enough to show that
\begin{align}
  \int_{Q_\ell}\int_{\R\times [-\ell,\ell]} \frac{\p_1 u_\eps(x-h)\p_1u_\eps(x)}{|h|} \ dh dx \ %
  &\leq \ 8 |\ln\eps| \int_{\p \Omega_0} |n\cdot e_1|^2 d\HH^1 + C(\Omega_0), \label{main-upper} \\
  \int_{Q_\ell}\int_{\R\times[-\ell,\ell]} \frac{\p_2 v_\eps(x-h)\p_2 v_\eps(x)}{|h|} \ dh dx \ %
  &\leq  \ C(\Omega_0).\label{non-main-upper}
\end{align}
The proof of \eqref{main-upper} is given in the sequel. \ed{The proof of
  \eqref{non-main-upper} follows with the same arguments using the corresponding
  estimates in Lemma \ref{lem-nonlupper}\ref{it-1d-v} instead of Lemma
  \ref{lem-nonlupper}\ref{it-1d-u}--\ref{it-logprof}.}

\medskip

We first note that by the construction in Definition \ref{def-meps}
$\nabla \cdot m_\eps$ has support in
$\CN_\eps := \CN_{\bet_\eps}(\gam_\eps) \SUS Q_\ell$ with
$\bet_\eps = \eps^{\frac 56}$. Furthermore, the set $\mathcal N_\eps$ can be
expressed as finite union of rectangles $R_\eps^{(k)}$ (covering the edge
regions without the corners) and annulus sectors $C_\eps^{(k)}$,
$1\leq k\leq N$, joining each rectangle, i.e.
$\mathcal N_\eps = \bigcup_{k=1}^N R_\eps^{(k)} \cup C_\eps^{(k)}$. 

\medskip

\textit{Proof of \eqref{main-upper}:} Since $\p_1u_\eps$ is supported in
$\mathcal N_\eps$, we hence need to estimate terms of the form
  \begin{align}
    X(A,B) \ := \ %
    \int_{A} \int_{B \cap \{ (x,y) : x-y \in \R \times [-\ell,\ell] \}} \frac{\p_1 u_\eps(x) \p_1 u_\eps(y)}{|x-y|} \ dx dy,
\end{align}
where $A \in \PP_{\eps}$ and $B \in \t \PP_\eps(A)$ and where
  \begin{align}
    \PP_{\eps} \ := \ \{ R_\eps^{(k)} , C_\eps^{(k)} \SUS \R^2
    : 1 \leq k \leq N \}
  \end{align}
  is a set of connected representatives of the edge or corner regions. \hks{Here, for
  simplicity we use the same notation for $R_\eps^{(k)} \subset Q_\ell$ (and
    ($C_\eps^{(k)}$)) and its connected representative
  $\tilde R_{\eps}^{(k)} \subset \R^2$, which is a connected component of
  the pre-image of $R_\eps^{(k)}$ under the quotient map.} Furthermore,
    \begin{align}
      \t \PP_\eps(A)  \ %
      := \ \{ \text{$B = R_\eps^{(k)}$ or $B = C_\eps^{(k)}$} \ : \  (B - A) \cap \R \times [-\ell,\ell] \neq \emptyset \}
    \end{align}
    is the finite set of connected representatives of the edge and corner
    regions which are close to $A$.  It hence remains to estimate terms of the
    form for the self-interaction energy of edges and corners, and terms of the
    form for the interaction between different edges, and for the interaction
    energy of an edge with a corner.  The estimates are given below:

  \medskip

  \textit{(i) Self-interaction energy on edge regions:} We claim that
  \begin{align} \label{est-rkrk} %
    X(R_\eps^{(k)},R_\eps^{(k)}) \ \leq \ 8(n_k\cdot e_1)^2
    \HH^1(\gam_k)|\ln\eps|+C(\Omega_0) \qquad \text{for any edge region
    $R_\eps^{(k)} \in \PP_\eps$,}
  \end{align}
  where $n_k\in \mathbb{S}^1$ is the outer unit normal for the edge
    $\gamma_k\subset \p\Omega_0$, where $\gamma_k$ is parallel with
    $\gamma_\eps^{(k)}:=\gamma_\eps\cap R^{(k)}_\eps$. Indeed, by a change of
  variables, we can write 
  \begin{align}\label{eq-Rk}
    X(R_\eps^{(k)},R_\eps^{(k)}) \ %
    & = \ |n_k\cdot e_1|^2 \int_0^{\ell_\eps^{(k)}} \int_0^{\ell_\eps^{(k)}} \int_{-\bet_\eps}^{\bet_\eps} \int_{-\bet_\eps}^{\bet_\eps}  \frac{\t u'_\eps(t) \t u'_\eps(s)}{\sqrt{|t-s|^2+|x_2-y_2|^2} } \ dt ds  dx_2  dy_2,
  \end{align}
 where $\ell_\eps^{(k)}:=\HH^1(\gamma^{(k)}_\eps)$ is the length of the rectangle $R^{(k)}_\eps$. We have used that, within each rectangle $R_\eps^{(k)}$, $u_\eps$ is a
  one-dimensional transition layer across the straight line segment
  $\gamma_\eps^{(k)}$. Using the fact that
  $|t-s|\leq 2\bet_\eps\ll \ell_k:=\HH^1(\gam_k)$, a direct computation yields
  \begin{align}
    \int_0^{\ell_k}\int_0^{\ell_k}\frac{1}{\sqrt{|t-s|^2+|x_2-y_2|^2}} \ dx_2 dy_2 \ %
    \leq \ C + 2 \ell_k \ln \frac 1{|t-s|}
  \end{align}
 for some universal constant $C < \infty$. Since $\t u'_\eps\geq 0$ and $\ell_\eps^{(k)} \leq \ell_k$, we hence get
  the bound
  \begin{align} \label{rkrk-1} %
    X(R_\eps^{(k)},R_\eps^{(k)}) \ %
    &\leq \ |n_k\cdot e_1|^2 \Big(C +
      2\ell_k\int_{-\bet_\eps}^{\bet_\eps}\int_{-\bet_\eps}^{\bet_\eps}
      \t u'_\eps(t)\t u'_\eps(s) \ln \frac 1{|t-s|} \ dt ds \Big).
  \end{align}
  An application of Lemma \ref{lem-nonlupper}\ref{it-logprof} (with
  $\bet=R=\bet_\eps$) and since $\eps \ll \bet_\eps=\eps^{5/6} \ll 1$, we get
  the estimate \eqref{est-rkrk}. The sum over all terms of the form
    $X(R_\eps^{(k)},R_\eps^{(k)})$ for $R_\eps^{k} \in \PP_\eps$ hence
    yields the right hand side of \eqref{main-upper} and it remains to show that
    the other terms are of lower order. These estimates are given below:

\medskip 

\textit{(ii) Interaction energy related to corner regions:} Each
corner region $C_\eps^{(k)} \in \t \PP_\eps$ is an annulus sector of the
form $C_\eps^{(k)}-q_k\subset B_{3\beta_\eps}\setminus B_{\beta_\eps}$ for some
$q_k$. We claim that
\begin{align} \label{est-ckck} %
  X(C_\eps^{(k)},C_\eps^{(j)}) \ \leq \ C(\Ome_0) \eps^{\frac
  12} \qquad %
  \text{ for any corner regions $C_\eps^{(k)} \in \PP_\eps$,
  $C_\eps^{(j)} \in \t \PP_\eps (C_\eps)$ } .
  \end{align}
  By the
  change of variables $x \mapsto (x-q_k)/\bet_\eps$ and since by construction we
  have $|\nabla u_\eps|\leq C\eps^{-1}$, the claim \eqref{est-ckck} then follows
  from the estimate
  \begin{align}
    X(C_\eps^{(k)},C_\eps^{(j)}) \ %
    \leq \ \frac{C(\Ome_0) \bet_\eps^3}{\eps^2}\int_{B_{ 3}\BS B_{1}}\int_{B_{ 3}\BS B_{ 1}}\frac{1}{|x-y|} \ dxdy \ %
    \leq \ \frac{C(\Ome_0) \bet_\eps^3}{\eps^2} \ %
    = \ C(\Ome_0) \eps^{\frac 12}.
  \end{align}
  Now we consider any corner $C_\eps^{(k)} \in \PP_\eps$ and a corresponding
  adjacent edge region $R_\eps^{(j)} \in \t \PP_\eps(C_\eps^{(k)})$. In this case, we can use
  Cauchy-Schwarz, i.e. ($X(A,B)\leq \sqrt{X(A,A)}\sqrt{X(B,B)}$) together
    with (i) and (ii), to get
  $X(C_\eps^{(k)},R_\eps^{(j)}) \ \leq \ C(\Ome_0)$.

  \medskip

  \textit{(iii) Interaction between different edge regions:} We
  claim that
  \begin{align} \label{est-rkrj} %
    X(R_\eps^{(k)},R_\eps^{(j)}) \ \leq \ C(\Ome_0) \ < \ \infty
  \end{align}
  for any two different edge regions $R_\eps^{(k)} \in \PP_\eps$ and
  $R_\eps^{(j)} \in \t \PP_\eps(R_\eps^{(k)})$. We first consider the case
    of two adjacent edge regions: \ed{Let $\alp_k = \alp_k(\Ome_0)$ be the angle
  between the two edges regions (cf. Fig \ref{fig-transf}).  Let
  $T:\R^2\rightarrow \R^2$ be any fixed transformation which is one-to-one and
  satisfies $T\big|_{R_\eps^{(j)}}$ is the identity and $T\big|_{R_\eps^{(k)}}$
  is the rotation about $p_k$ (cf. Fig \ref{fig-transf}), such that up to a
  rotation and translation of the coordinates
  $\tilde{R}_\eps^{(k)}:=T(R_\eps^{(k)})=
  (-\ell_\eps^{(k)}-C\beta_\eps,0)\times (-\beta_\eps,\beta_\eps)$ and
  $R_\eps^{(j)}= (\ell_\eps^{(k)}+C\beta_\eps,0)\times (-\beta_\eps,\beta_\eps)$
  for some $C=C(\alpha_k)$. With such transformation we have
  $\frac{1}{|x-y|}\leq C(\alp_k)\frac{1}{|T(x)-T(y)|}$ for
  $x\in R_\eps^{(j)}$ and $y\in R_\eps^{(k)}$.}
  \begin{figure}
    \centering %
    \includegraphics[height=3.8cm]{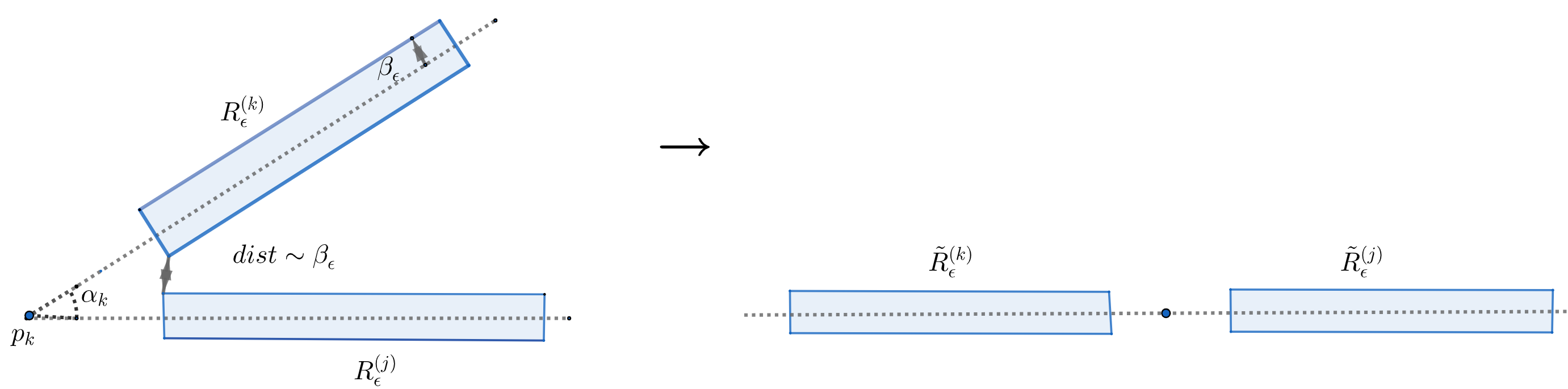}   
    \caption{The transformation $T$.}
    \label{fig-transf}
  \end{figure}
  Moreover, we get the estimate
  \begin{align}
    X(R_\eps^{(k)},R_\eps^{(j)}) \ %
    &\leq \ C(\Ome_0) \int_{-\ell_\eps^{(k)}-C\beta_\eps}^0\int_{-\bet_\eps}^{\bet_\eps}\int_0^{\ell_\eps^{(j)}+C\beta\eps}\int_{-\bet_\eps}^{\bet_\eps} \frac{\t u'_\eps(t)\t u'_\eps(s)}{\sqrt{|t-s|^2+|x_2-y_2|^2}} dtdx_2dsdy_2\\
    &\leq \ C(\Ome_0)  \Big(\int_{-\bet_\eps}^{\bet_\eps}  \t u'_\eps(t) \  dt \Big)^2  \Big( \int_{0}^{\ell_\eps^{(k)}+C\beta_\eps} \int_{0}^{\ell_\eps^{(j)}+C\beta_\eps}  \frac{1}{x_2+y_2} dx_2 dy_2 \Big) \label{bin-rhs}.
  \end{align}
   Both integrals on the right hand side of \eqref{bin-rhs} are estimated by
    a universal constant. For two non-adjacent edge regions one has
    $|x-y|=O(1)$ for $x\in R_\eps^{(k)}$ and $y\in R_\eps^{(j)}$. With an
    analogous transformation as before we hence have 
\begin{align}
X(R_\eps^{(k)}, R_\eps^{(j)})\ \leq \ C(\Omega_0) \int_{-\beta_\eps}^{\beta_\eps} \int_{-\beta_\eps}^{\beta_\eps} \t u'_\eps(t) \t u'_\eps(s) \ dtds \leq C(\Omega_0).
\end{align}
\end{proof}
In the next lemma we estimate the local term in the energy:
\begin{lemma}[Recovery sequence -- local terms] \label{lem-feps-pol} %
  Let $\Ome_0 \SUS Q_\ell$ be a polygonal set which satisfies the assumptions of
  Definition \ref{def-meps}. Let
  $m_\eps\to m = (\chi_{\Ome_0} - \chi_{\Ome_0^c}) e_1$ be the sequence from
  Definition \ref{def-meps} and let $\sig_\eps:=\nabla\cdot (m_\eps-M)$. Then
  \begin{align} %
    \frac 12 \int_{Q_\ell} \Big(\eps |\nabla m_\eps|^2 +\frac{v_\eps^2}{\eps} \Big) \ dx \ %
    \leq \ 2 \int_{\SS_m} 1 \ dx  + C(\Ome_0) \eps^{\frac 56}.
  \end{align}
\end{lemma}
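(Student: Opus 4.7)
The integrand is supported in the tubular neighborhood $\mathcal N_\eps$, since $m_\eps = \pm e_1$ on $D_\eps^\pm$ and hence $\nabla m_\eps = 0$ and $v_\eps = 0$ outside $\mathcal N_\eps$. The plan is to decompose $\mathcal N_\eps$ into pieces around the straight edges of $\gam_\eps$ (long rectangles $R_\eps^{(k)}$) and pieces around the replacement arcs at each corner of $\p\Ome_0$ (annulus sectors $C_\eps^{(k)}$), and then use the tubular coordinates $(\sigma, t)$ from Definition \ref{def-meps} on each piece in combination with Lemma \ref{lem-nonlupper}(i).

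On each rectangular region $R_\eps^{(k)}$, the change of variables $x = \sigma + t n_\eps$ with $\sigma \in \gam_\eps^{(k)}$ and $t \in (-\bet_\eps,\bet_\eps)$ has unit Jacobian, $u_\eps$ only depends on $t$ through the 1D profile $\t u_\eps$, and $v_\eps$ equals the 1D $\t v_\eps$. By Fubini and Lemma \ref{lem-nonlupper}\ref{it-local-u}, the contribution is bounded by
\begin{align}
  \HH^1(\gam_\eps^{(k)})\bigl(2 + C e^{-c_0\bet_\eps/\eps}\bigr)
  \leq \HH^1(\gam_k)\bigl(2 + C e^{-c_0\bet_\eps/\eps}\bigr),
\end{align}
where $\gam_k$ is the corresponding edge of $\p\Ome_0$. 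Summing over the finitely many straight pieces gives at most $2\HH^1(\p\Ome_0) + C(\Ome_0)e^{-c_0\bet_\eps/\eps}$.

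On each corner annulus sector $C_\eps^{(k)}$ (an arc of radius $2\bet_\eps$ carrying a tubular neighborhood of width $\bet_\eps$), tubular coordinates give the area element $dx = (1 + O(t/\bet_\eps))\,d\sigma\,dt$, which is bounded by a universal constant for $|t| \leq \bet_\eps$; $u_\eps$ and $v_\eps$ are again given by the 1D profiles in $t$. Applying Lemma \ref{lem-nonlupper}\ref{it-local-u} once more, the contribution of a single corner is bounded by $C\bet_\eps\cdot(2 + Ce^{-c_0\bet_\eps/\eps})\leq C\bet_\eps$. Since $\Ome_0$ has only finitely many corners, the total corner contribution is $\leq C(\Ome_0)\bet_\eps = C(\Ome_0)\eps^{5/6}$.

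Adding the estimates and using $\HH^1(\p\Ome_0) = \HH^1(\SS_m) = \int_{\SS_m} 1\,d\HH^1$ gives the claimed bound. The only step that requires any care is the Jacobian estimate on the corner arcs, which is however routine because the ratio $|t|/(2\bet_\eps) \leq 1/2$ keeps the Jacobian uniformly bounded — this is precisely why the radius $2\bet_\eps$ of the rounding was chosen larger than the tube half-width $\bet_\eps$ in Definition \ref{def-meps}.
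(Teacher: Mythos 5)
Your proposal is correct and follows essentially the same route as the paper: decompose $\mathcal N_\eps$ into the edge rectangles $R_\eps^{(k)}$ and corner sectors $C_\eps^{(k)}$, apply Lemma \ref{lem-nonlupper}\ref{it-local-u} to the one--dimensional profile in the normal variable, and absorb the corner contributions into $C(\Ome_0)\bet_\eps = C(\Ome_0)\eps^{5/6}$. Your Jacobian bound in tubular coordinates on the arcs is just a reformulation of the paper's coarea argument (level sets of $d_{\p\Ome_\eps}$ in the corner regions have length $\leq C\bet_\eps$), so there is nothing substantive to add.
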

\begin{proof}
  We use the notations of the proof of Definition \ref{def-meps}. Let the
    edge and corner regions $R_\eps^{(k)}$ and $C_\eps^{(k)}$ be
    given as in the proof of Lemma \ref{lem-self-inter}.  Then
  \begin{align}
    \frac 12 \int_{Q_\ell} \Big(\eps |\nabla m_\eps|^2 + \frac{v_\eps^2}{\eps} \Big) \ dx \ 
    = \ \frac 12 \sum_{k=1}^N \int_{R_\eps^{(k)} \cup C_\eps^{(k)}} \Big(\eps |\nabla m_\eps|^2 + \frac{v_\eps^2}{\eps} \Big) \ dx.
  \end{align}
  By Lemma \ref{lem-nonlupper}\ref{it-local-u}, we have
  \begin{align}
     \frac 12 \int_{R_\eps^{(k)}} \Big(\eps |\nabla m_\eps|^2  + \frac{ v_\eps^2 }{\eps} \Big) \ dx \ %
    = \  \frac {\ell_\eps^{(k)}}2 \int_{- \bet_\eps }^{ \bet_\eps } \Big(\eps  [\t u_\eps'^2 + \t v_\eps'^2]  + \frac {\t v_\eps ^2}{\eps} \Big)  \ dt \ %
    \leq \ 2 \ell_\eps^{(k)} + Ce^{-\frac{c_0 \bet_\eps}{\eps}}.
  \end{align}
  In the corner regions $C_\eps^{(k)}$, all level sets
  $d_{\p \Ome_\eps}^{-1}(s)$ have length no larger than $8\pi
  \bet_\eps$. Thus by the coarea formula and Lemma
  \ref{lem-nonlupper}\ref{it-local-u}, and the choice
  $\bet_\eps=\eps^{\frac{5}{6}}$
  \begin{align}
    \int_{C_\eps^{(k)}}\eps |\nabla m_\eps|^2 + \frac{v_\eps^2}{\eps} \  dx \ %
    \leq \ C\bet_\eps \int_{-\beta_\eps}^{\beta_\eps} \eps (\t u_\eps'^2 + \t v_\eps'^2) + \frac{\t v_\eps^2}{\eps}  \ dt \ %
    \leq \ C \eps^{\frac{5}{6}}.
  \end{align}
  The assertion follows by summing up the above estimates.
\end{proof}
With Lemmas \ref{lem-feps-pol}--\ref{lem-self-inter} at hand, we are ready
  to give the proof of the limsup--inequality:
\begin{proposition}[\ed{Limsup--inequality} for
  $\Gam$--limit] \label{prp-recovery} %
  For any $m \in \AA_0$ there exists a recovery sequence $m_\eps \in \AA$ with
  $m_\eps\to m$ in $L^1(Q_\ell)$ such that
  $\limsup_{\eps \to 0} E_\eps[m_\eps] \ \leq \ E_0[m]$.
\end{proposition}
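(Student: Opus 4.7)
The plan is to build the recovery sequence by combining the reduction steps outlined in Section \ref{sec-recovery} (polygonal approximation, normal-constraint via zigzag modification) with the explicit one-dimensional profile construction of Definition \ref{def-meps}, and then invoking the already proven Lemmas \ref{lem-feps-pol} and \ref{lem-self-inter} for the upper energy bound. First I would reduce $m \in \AA_0$ to a sequence of polygonal configurations $m_j = (\chi_{\Ome_0^{(j)}} - \chi_{(\Ome_0^{(j)})^c}) e_1$ with $m_j \to m$ in $L^1$ and $E_0[m_j] \to E_0[m]$. Standard density of polygonal sets in the space of sets of finite perimeter (\cite[Remark~13.13]{Maggi-Book}) yields $|\Ome_0^{(j)} \Delta \Ome_0| \to 0$ together with weak-$*$ convergence of the oriented boundary measure $n_j \HH^1 \llcorner \SS_{m_j}$ to $n \HH^1 \llcorner \SS_m$; since the integrand $f(\sqrt{\lam}|\cdot|)$ is bounded and continuous, this gives convergence of $E_0$. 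By further subdividing edges one arranges that every vertex of $\Ome_0^{(j)}$ meets exactly two edges and every edge has length $\leq \ell/2$, as required by Definition \ref{def-meps}.

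Next, in the supercritical regime $\lam > 1$ I apply Lemma \ref{lem-tOme} to each $\Ome_0^{(j)}$, producing a further polygonal approximation (not relabelled) whose normal satisfies $|n_j \cdot e_1| \leq \lam^{-1/2}$ everywhere along $\p \Ome_0^{(j)}$ and for which $E_0[m_j]$ is preserved exactly. In the subcritical case $\lam \leq 1$ the bound $|n \cdot e_1| \leq 1 \leq \lam^{-1/2}$ is automatic and no modification is needed. I then apply the construction of Definition \ref{def-meps} with $\bet_\eps = \eps^{5/6}$ to obtain $m_{j,\eps} \in \AA$. Since $m_{j,\eps}$ coincides with $m_j$ outside the $\bet_\eps$-neighborhood of $\p \Ome_0^{(j)}$, whose Lebesgue measure is $O(\bet_\eps)$, I get $m_{j,\eps} \to m_j$ in $L^1(Q_\ell)$ as $\eps \to 0$.

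Combining the local bound of Lemma \ref{lem-feps-pol} with the nonlocal bound of Lemma \ref{lem-self-inter} then yields
\begin{align*}
E_\eps[m_{j,\eps}] \ \leq \ 2 \HH^1(\SS_{m_j}) + 2\lam \int_{\SS_{m_j}} |n_j \cdot e_1|^2 \, d\HH^1 + o(1),
\end{align*}
where the error depends only on $\Ome_0^{(j)}$, $\ell$ and $M$. Because the enforced normal bound $\sqrt{\lam}|n_j \cdot e_1| \leq 1$ places us on the branch of $f$ where $f(s) = 1 + s^2$, the right-hand side coincides with $2\int_{\SS_{m_j}} f(\sqrt{\lam}|n_j \cdot e_1|) \, d\HH^1 = E_0[m_j]$. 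Hence $\limsup_{\eps \to 0} E_\eps[m_{j,\eps}] \leq E_0[m_j]$, and a standard diagonal extraction, using metrizability of $L^1$-convergence, produces $\eps_j \to 0$ with $m_{j,\eps_j} \to m$ in $L^1(Q_\ell)$ and $\limsup_j E_{\eps_j}[m_{j,\eps_j}] \leq \lim_j E_0[m_j] = E_0[m]$.

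The main obstacle is the control of the anisotropic limit energy during the polygonal approximation, which is subtle precisely because $f$ is only piecewise smooth and the normal constraint $|n \cdot e_1| \leq \lam^{-1/2}$ is in general violated by crude polygonal approximants in the supercritical case. A direct one-dimensional ansatz along an edge with $|n \cdot e_1| > \lam^{-1/2}$ would produce a per-length cost $2(1 + \lam|n \cdot e_1|^2) > 2 \cdot 2\sqrt{\lam}|n \cdot e_1| = 2f(\sqrt{\lam}|n \cdot e_1|)$, overshooting the limit. The zigzag construction of Lemma \ref{lem-tOme} resolves this by converting any such edge into a zigzag saturating $|n_{\pm} \cdot e_1| = \lam^{-1/2}$ \emph{without} changing $E_0$, which is exactly what makes the one-dimensional ansatz of Definition \ref{def-meps} asymptotically sharp against the anisotropic limit energy density.
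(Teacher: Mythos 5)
Your proposal is correct and follows essentially the same route as the paper: polygonal approximation of the jump set via \cite[Remark 13.13]{Maggi-Book}, the zigzag modification of Lemma \ref{lem-tOme} in the supercritical case, the one--dimensional ansatz of Definition \ref{def-meps} estimated through Lemmas \ref{lem-feps-pol} and \ref{lem-self-inter}, and a diagonal argument. The only imprecision is the claim that weak-$*$ convergence of $n_j \HH^1\llcorner \SS_{m_j}$ plus boundedness and continuity of $f$ already gives $E_0[m_j]\to E_0[m]$; one also needs convergence of the total variations $\|Dm_j\|\to\|Dm\|$ (i.e.\ Reshetnyak's continuity theorem, as invoked in the paper), which the cited approximation does supply.
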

\begin{proof} %
  Given $m\in \AA_0$ there is a sequence of $m_j\in \AA_0$ with polygonal jump
  set such that
  \begin{align}\label{eq-polygon_appro}
    m_j\to m \text{ in } L^1(Q_\ell) \quad \text{ and } \quad %
    \|\nabla m_j\|\to \|Dm\| \qquad \text{ as } j\to  \infty,
  \end{align}
  cf. \cite[Remark 13.13]{Maggi-Book}. Since $f$ is Lipschitz continuous, by Reshetnyak's Theorem \cite{Reshetnyak68}
  $E_0$ is continuous with respect to the convergence in variation of measures,
  i.e. $E_0[m_j]\to E_0[m]$, where $m_j$ satisfies \eqref{eq-polygon_appro}.  By
  Lemma \ref{lem-tOme}, for each $m_j$, there is a sequence of magnetizations
  $m_{j,k}$ with polygonal jump sets whose normals satisfy
  $|n_1|\leq \lambda^{-\frac 12}$, such that $m_{j,k}\to m_j$ in
  $L^1(Q_\ell)$ as $k\to \infty$ and they have the same limit energy,
  i.e. $E_0[m_{j,k}]=E_0[m_j]$ for all $k$.  By a standard diagonal argument, it
  is then enough to construct recovery sequences $m_{j,k}^\eps$ for each
  $m_{j,k}$, which satisfies
  $\limsup_{\eps\to 0} E_{\eps}[m_{j,k}^\eps]\leq E_0[m_{j,k}]$.

\medskip
  
Thus we may assume that $m= (\chi_{\Ome_0} - \chi_{\Ome_0^c}) e_1$, where
$\Ome_0$ is a polygonal set which satisfies the assertions of Lemma
\ref{lem-tOme}.  Let $m_\eps=(u_\eps, v_\eps)$ be the sequence constructed in
Definition \ref{def-meps}.  By application of Lemmas
\ref{lem-self-inter}--\ref{lem-feps-pol}, we have
  \begin{align} \label{ls-eq} %
    E_{\eps}[m_{\eps}] \ &\leq  \ 2\int_{\SS_m}\left(1+\lambda|n_1|^2\right) d\HH^1 + \frac C{|\ln\eps|^{\frac 12}} \ %
= \ E_0[m]+\frac{C}{|\ln\eps|^{\frac 12}}
  \end{align}
  for some $C = C(\Ome_0,\ell, M)$.  Taking the limsup for
  $\eps \to 0$ in \eqref{ls-eq} we conclude the proof.
\end{proof}

\section{Solution for limit problem} \label{sec-limit}

In this section, we derive the solution of the limit model. More precisely, we
derive the ground state energy in the subcritical ($\lambda\leq 1$) and
supercritical ($\lambda>1$) case, as stated in \eqref{e-ground} in the
introduction, and provide a characterization of the corresponding
minimizers:
\begin{proposition}[Solution of limit model] \label{prp-limit}%
  The minimal energy for $m \in \AA_0$ is given by
     \begin{align}
       e(\lam) \ := \ \min_{m \in \AA_0} E_0[m]  \ \ = \ %
       \begin{cases}
         2(1+\lambda)\ell \qquad&\text{ for } \lambda\leq 1,\\
         4\sqrt{\lambda}\ell &\text{ for } \lambda>1.
     \end{cases}
     \end{align}
     Global minimizers are those configurations, where the jump set $\SS_m$
     is a graph of the form $x_1 = \gam(x_2)$ with normal vector $n$
     (pointing outside $\{m=e_1\}$) satisfying
     \begin{align}
       \min \{ 1, \lam^{-\frac 12} \} \ \leq \ -n_1 \ \leq \ 1 \qquad\qquad %
       \text{for $\HH^1$-a.e. $x \in \SS_m$}.
     \end{align}
 \end{proposition}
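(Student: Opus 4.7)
\textbf{Proof plan for Proposition \ref{prp-limit}.}
The plan is to reduce the minimization to a pointwise inequality on the energy density combined with a slicing argument in the $x_2$-direction. Let
\begin{align}
c_\lam \ := \ \begin{cases} 1+\lam & \text{if } \lam \leq 1,\\ 2\sqrt{\lam} & \text{if } \lam > 1. \end{cases}
\end{align}
The key pointwise bound I will establish is
\begin{align} \label{pointwise-main}
f(\sqrt{\lam}\,|n_1|) \ \geq \ c_\lam\,|n_1| \qquad \text{for } |n_1| \in [0,1],
\end{align}
with $f$ the density in \eqref{def-f}. For $\lam \leq 1$ we have $|n_1| \leq 1 \leq \lam^{-1/2}$, so $f(\sqrt{\lam}|n_1|) = 1+\lam n_1^2$, and \eqref{pointwise-main} follows from the factorization $1 + \lam t^2 - (1+\lam) t = (1-t)(1-\lam t) \geq 0$ on $[0,1]$, with equality iff $t=1$. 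For $\lam > 1$, the inequality is an equality on $\{|n_1|>\lam^{-1/2}\}$ and reduces to AM--GM on $\{|n_1|\leq \lam^{-1/2}\}$, with equality iff $|n_1| \geq \lam^{-1/2}$.

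Next I would invoke the standard slicing theory for $BV$-functions applied to $u := m\cdot e_1 \in BV_{\rm loc}(Q_\ell;\{\pm 1\})$ to bound $\int_{\SS_m}|n_1|\,d\HH^1$ from below. Since $Du = 2n\,\HH^1\llcorner \SS_m$, one has $|\p_1 u|(Q_\ell) = 2\int_{\SS_m}|n\cdot e_1|\,d\HH^1$. By Fubini for $BV$ (cf. \cite[Thm.~3.103]{Ambrosio-Book}), for a.e.~$x_2 \in \T_\ell$ the slice $u(\cdot,x_2)$ lies in $BV_{\rm loc}(\R;\{\pm 1\})$ and
\begin{align}
2\int_{\SS_m} |n \cdot e_1|\,d\HH^1 \ = \ |\p_1 u|(Q_\ell) \ = \ \int_{\T_\ell} |Du(\cdot,x_2)|(\R)\,dx_2 \ \geq \ 2\ell,
\end{align}
because by the boundary condition \eqref{bc} each slice must jump from $-1$ (for $x_1 < -1$) to $+1$ (for $x_1 > 1$) and hence has at least one jump, contributing a variation of at least $2$. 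Combining this with \eqref{pointwise-main} gives
\begin{align}
E_0[m] \ = \ 2\int_{\SS_m} f(\sqrt{\lam}\,|n_1|)\,d\HH^1 \ \geq \ 2 c_\lam \int_{\SS_m} |n_1|\,d\HH^1 \ \geq \ 2 c_\lam \ell,
\end{align}
which is the claimed lower bound for $e(\lam)$.

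For sharpness, I would exhibit explicit minimizers. For $\lam \leq 1$, the straight wall $\SS_m = \{0\}\times \T_\ell$ with $n = -e_1$ gives $E_0[m] = 2\ell(1+\lam)$. For $\lam > 1$, any graph $x_1 = \gam(x_2)$ with $|\gam'| \equiv \sqrt{\lam-1}$ (so that $|n_1| \equiv \lam^{-1/2}$) satisfies $f(\sqrt{\lam}|n_1|) = 2$ and $d\HH^1 = \sqrt{\lam}\,dx_2$, yielding $E_0[m] = 4\sqrt{\lam}\,\ell$. Finally, the characterization of global minimizers is read off from tracking the two inequalities. Equality in \eqref{pointwise-main} forces $|n_1| \geq \min\{1,\lam^{-1/2}\}$ $\HH^1$-a.e. on $\SS_m$; equality in the slicing inequality forces $\#(\SS_m \cap \pi_2^{-1}(x_2)) = 1$ for a.e.\ $x_2 \in \T_\ell$, so that $\SS_m$ is a graph $x_1 = \gam(x_2)$; and the sign $-n_1 > 0$ comes from the fact that the outer normal to $\{m=e_1\}$ must point toward $\{x_1 < -1\} \subset \{m=-e_1\}$. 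Since the argument is almost entirely algebraic once the slicing identity is in place, I do not expect a serious obstacle; the only subtle point is checking that the slice $u(\cdot,x_2)$ has at least one jump for a.e.\ $x_2$, which is immediate from \eqref{bc} and the choice of good representatives of $u$ on $\HH^1$-a.e.\ vertical slice.
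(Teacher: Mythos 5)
Your proof is correct, and it reaches the result by a somewhat different route than the paper. The paper first derives the signed constraint $-\int_{\SS_m} n_1\,d\HH^1=\ell$ from the boundary condition \eqref{bc} via the divergence theorem, and then argues separately in the two regimes: for $\lambda\le 1$ it applies Cauchy--Schwarz to get $E_0[m]\ge 2\big(\HH^1(\SS_m)+\lambda\ell^2/\HH^1(\SS_m)\big)$ and optimizes over the admissible total length $\HH^1(\SS_m)\ge\ell$, while for $\lambda>1$ it uses the AM--GM form $f(s)\ge 2s$ of \eqref{def-f}. You instead prove the unified pointwise bound $f(\sqrt{\lambda}\,|n_1|)\ge c_\lambda |n_1|$ on $[0,1]$ (the factorization $(1-t)(1-\lambda t)\ge 0$ in the subcritical case, AM--GM in the supercritical case) and obtain the constraint $\int_{\SS_m}|n_1|\,d\HH^1\ge\ell$ by BV slicing in the $x_1$-direction rather than from the signed divergence identity; the two constraints are interchangeable here. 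What your version buys is a cleaner equality analysis: equality in the pointwise bound immediately gives $|n_1|\ge\min\{1,\lambda^{-\frac 12}\}$ $\HH^1$-a.e.\ in both regimes (the paper, for $\lambda\le 1$, has to combine equality in Cauchy--Schwarz with $\HH^1(\SS_m)=\ell$), and equality in the slicing inequality directly forces exactly one jump on a.e.\ vertical slice, which is precisely what yields the graph structure and the sign $-n_1>0$ --- a step the paper leaves rather implicit. Two small points you should make explicit: first, reduce at the outset to $E_0[m]<\infty$ (then $f\ge 1$ gives $\HH^1(\SS_m)<\infty$, so $u\in BV$ and the slicing theorem applies with finite total variation); second, in the supercritical construction the profile with $|\gam'|\equiv\sqrt{\lambda-1}$ must be chosen $\ell$-periodic and with range in $[-1,1]$ (e.g.\ a fine zigzag) so that \eqref{bc} is respected. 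Neither affects the validity of the argument.
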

\begin{proof}
  From the boundary condition \eqref{bc}, for every $m =(u,v)\in \AA_0$ it follows
  that
  \begin{align} \label{cons} %
    \frac 12 \int_{Q_\ell} e_1\cdot \nabla u \ d\HH^1 \ %
    = \ -\int_{\SS_m} n_1\ d\HH^1 \ %
    = \ \ell.
\end{align}
In the subcritical case $\lambda\leq 1$, by H\"older's inequality and
\eqref{cons} we get
\begin{align}
  E_0[m] \ = \ 2 \int_{\SS_m} \big( 1 + \lambda |n_1|^2 \big) \ d\HH^1 \ %
  \ \lupref{cons}\geq \ 2 \left(\HH^1(\SS_m)+ \frac{\lambda \ell^2}{\HH^1(\SS_m)}\right).
\end{align}
Thus the minimum is achieved for $\HH^1(\SS_m)=\ell$, when $\SS_m$ is
a single line segment from $a$ to $a+ \ell e_2$ for some
$a\in [-1,1]\times\{0\}$, and the minimal energy is $2(1+\lambda)\ell$. For $\lambda>1$ we have
\begin{align} 
  E_0[m] \ = \ 2 \int_{\SS_m} \inf_{\alp \geq 1} \Big[ \alp + \frac{\lam |n_1|^2}{\alp} \Big]  \ d\HH^1 \ %
  \geq \ 4 \int_{\SS_m} \sqrt{\lam}| n_1 | \ d\HH^1 \ %
  \lupref{cons}\geq  4\sqrt{\lam} \ell.
\end{align}
Equality is achieved if and only if $-n_1 \geq \lambda^{-\frac 12}$ $\HH^1$-a.e. on $\SS_m$. This yields that $\SS_m$ is a single graph $x_1=\gamma(x_2)$
for some function $\gamma$.
\end{proof}

\appendix

\section{Real space representation of the stray field}

We first recall the following standard representation of the homogeneous
  $H^{\frac 12}$--norm. We give the short proof since the constant in front of
  the identity is essential in our arguments:
  \begin{lemma}[Finite difference representation of $H^{\frac 12}$--norm] %
    For $m \in H^{\frac 12}(Q_\ell)$ we have
    \begin{align} \label{rep-h12} %
      \int_{Q_\ell} \big||\nabla|^{\frac 12} m\big|^2 \ dx \ %
      = \ \frac 1{4\pi} \int_{Q_\ell} \int_{\R^2} \frac{|m(x+h)-m(x)|^2}{|h|^3} \ dh dx.
    \end{align} 
  \end{lemma}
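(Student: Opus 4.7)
The strategy is to compute both sides via the Fourier representation on $Q_\ell$ (using the mixed transform/Fourier--series from the notation section) and reduce to a one--parameter kernel integral. Recall that for $m \in H^{\frac 12}(Q_\ell)$ the left--hand side is by definition $\int_{\R\times\frac{2\pi}{\ell}\Z}|\xi|\,|\widehat m(\xi)|^2\,d\xi$, so it suffices to show that the right--hand side equals $\frac{1}{4\pi}\cdot 4\pi\int_{\R\times\frac{2\pi}{\ell}\Z}|\xi|\,|\widehat m(\xi)|^2\,d\xi$.

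First I would fix $h \in \R^2$ and note that the translated function $m(\cdot + h)$ has Fourier transform $e^{-i\xi\cdot h}\widehat m(\xi)$ with the sign convention in \eqref{plancherel-id}. Hence by Plancherel, $\int_{Q_\ell}|m(x+h)-m(x)|^2\,dx = \int_{\R\times\frac{2\pi}{\ell}\Z}|e^{-i\xi\cdot h}-1|^2\,|\widehat m(\xi)|^2\,d\xi$. Since the integrand is nonnegative, Tonelli allows me to interchange the order of integration in the right--hand side of \eqref{rep-h12} to obtain
\begin{align}
\int_{Q_\ell}\int_{\R^2}\frac{|m(x+h)-m(x)|^2}{|h|^3}\,dh\,dx \ = \ \int_{\R\times\frac{2\pi}{\ell}\Z} K(\xi)\,|\widehat m(\xi)|^2\,d\xi,
\end{align}
where $K(\xi):=\int_{\R^2}\frac{|e^{-i\xi\cdot h}-1|^2}{|h|^3}\,dh = \int_{\R^2}\frac{2(1-\cos(\xi\cdot h))}{|h|^3}\,dh$.

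The key step is the identity $K(\xi) = 4\pi|\xi|$, which follows by rotation invariance and scaling: after rotating so that $\xi = |\xi| e_1$ and substituting $h \mapsto h/|\xi|$, one gets $K(\xi) = |\xi|\int_{\R^2}\frac{2(1-\cos h_1)}{|h|^3}\,dh$. Writing this last integral in polar coordinates and performing the radial substitution $s = r|\cos\theta|$ reduces it to $\left(\int_0^\infty \frac{2(1-\cos s)}{s^2}\,ds\right)\cdot\int_0^{2\pi}|\cos\theta|\,d\theta = \pi\cdot 4 = 4\pi$, using the classical value $\int_0^\infty\frac{1-\cos s}{s^2}\,ds = \frac{\pi}{2}$. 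Inserting $K(\xi)=4\pi|\xi|$ above and dividing by $4\pi$ gives \eqref{rep-h12}.

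The main (minor) obstacle is simply bookkeeping the universal constant $4\pi$; no delicate approximation is required because everything is nonnegative and $m \in H^{\frac 12}$ ensures the right--hand side is finite by the Fourier identity itself (one may truncate $|h|<R$ and pass to the limit $R\to\infty$ by monotone convergence if one prefers to avoid invoking Tonelli directly on a possibly infinite integrand).
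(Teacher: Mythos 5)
Your proof is correct and follows essentially the same route as the paper: Plancherel on $Q_\ell$, Tonelli to swap the order of integration, and the scaling/rotation reduction of the kernel to $|\xi|\int_{\R^2}\frac{|1-e^{ih_1}|^2}{|h|^3}\,dh=4\pi|\xi|$. The only difference is that you evaluate the constant $4\pi$ explicitly via polar coordinates and $\int_0^\infty\frac{1-\cos s}{s^2}\,ds=\frac\pi2$ (correctly), whereas the paper cites this value from \cite{DKO-2006}.
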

  \begin{proof}
     Using Plancherels identity \eqref{plancherel-id} and Fubini's theorem, we
     obtain
   \begin{align}
     &\int_{Q_\ell} \int_{\R^2} \frac{|m(x+h)-m(x)|^2}{|h|^3} \ dx dh %
     = \ \int_{Q_\ell} |\widehat m(\xi)|^2 \int_{\R^2} \frac{|1-e^{i\xi \cdot h}|^2}{|h|^3}  \  dh d\xi \\
    &= \ \int_{Q_\ell} |\xi||\widehat m(\xi)|^2 \int_{\R^2} \frac{|1-e^{i\xi \cdot h}|^2}{|h|^3|\xi|} \ dh d\xi%
     = \ 4\pi \int_{Q_\ell} |\xi| |\widehat m(\xi)|^2 \ d\xi.
   \end{align}
   The last identity follows with the change of variables
   $h\mapsto \frac{h}{|\xi|}$ and since
   $\int_{\R^2} \frac{|1-e^{ih_1}|^2}{|h|^3} \ dh = 4\pi$
   (cf. \cite[(39)]{DKO-2006}).
 \end{proof}
 The next lemma yields another representation for the
 $H^{\frac 12}$--norm when $j \to - \infty$ and $k \to \infty$:
\begin{lemma}[$H^{\frac 12}$--norm vs. $H^{-\frac 12}$--norm] \label{lem-tworep} %
    For $f \in C^\infty_c(Q_\ell;\R^2)$ we have
  \begin{align} \label{thesam} %
    \hspace{6ex} & \hspace{-6ex} %
                    \frac 12 \iint_{\t Q_\ell \times \t P_\ell} \frac{|f(y)-f(x)|^2}{|x-y|^3}  \ dxdy \\ %
                  &= \ \iint_{\t Q_\ell \times \t P_\ell} \frac{(\nabla \cdot f)(x)(\nabla \cdot f)(y)}{|x-y|}\ dxdy \   
                    + \iint_{\t Q_\ell \times \t P_\ell} \frac{(\nabla \times f)(x)(\nabla \times f)(y)}{|x-y|}\ dxdy \\
                 &\geq \ \frac 12 \iint_{\t Q_\ell \times \t P_\ell} \frac{(\nabla \cdot f)(x)(\nabla \cdot f)(y)}{|x-y|}\ dxdy.
  \end{align}
  where $\t Q_\ell = \R \times [0, \ell)$ and
  $\t P_\ell = \R \times [j \ell, k \ell)$ for any $j < k \in \Z$.
\end{lemma}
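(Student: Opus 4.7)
The identity is the real-space avatar of the Fourier Helmholtz splitting $|\xi|^2|\widehat f(\xi)|^2 = |\xi\cdot\widehat f(\xi)|^2 + |\xi^\perp\cdot\widehat f(\xi)|^2$ (divided by $|\xi|$). I would prove it by integration by parts, exploiting the two-dimensional kernel identity $\Delta_x|x-y|^{-1} = |x-y|^{-3}$ (a consequence of $\Delta|x|^\alpha = \alpha(\alpha+n-2)|x|^{\alpha-2}$ with $n=2$, $\alpha=-1$), together with the algebraic identity for $2\times 2$ matrices
\begin{align*}
A:B \ = \ \mathrm{tr}(A)\,\mathrm{tr}(B) + (A_{12}-A_{21})(B_{12}-B_{21}) - \bigl(A_{11}B_{22}+A_{22}B_{11}-A_{12}B_{21}-A_{21}B_{12}\bigr),
\end{align*}
applied to $A = Df(x)$, $B = Df(y)$, using $\mathrm{tr}(Df) = \nabla\cdot f$ and $(Df)_{12}-(Df)_{21} = -\nabla\times f$.

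\medskip

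\textbf{Main steps.} The first step will be to establish the intermediate Jacobian identity
\begin{align*}
\tfrac12 \iint_{\t Q_\ell\times\t P_\ell}\frac{|f(y)-f(x)|^2}{|x-y|^3}\,dx\,dy \ = \ \iint_{\t Q_\ell\times\t P_\ell}\frac{Df(x):Df(y)}{|x-y|}\,dx\,dy
\end{align*}
componentwise: writing $|x-y|^{-3} = \Delta_x|x-y|^{-1}$ and integrating by parts once in $x$ moves $\nabla_x$ onto $(f_i(y)-f_i(x))^2$; using $\nabla_x|x-y|^{-1} = -\nabla_y|x-y|^{-1}$ converts this into a $y$-derivative, and a second IBP in $y$ turns $(f_i(y)-f_i(x))$ into $\nabla f_i(y)$, leaving $\nabla f_i(x)\cdot\nabla f_i(y)\,|x-y|^{-1}$; summing over $i=1,2$ yields $Df(x):Df(y)$. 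The matrix identity above then produces the two desired bilinear forms modulo a cofactor integral, for which I would show
\begin{align*}
\iint_{\t Q_\ell\times\t P_\ell}\frac{\mathrm{cof}(Df(x)):Df(y)}{|x-y|}\,dx\,dy \ = \ 0
\end{align*}
by transferring its two derivatives onto the kernel via IBP: the resulting four terms combine into expressions of the form $\bigl(f_1(x)f_2(y)-f_2(x)f_1(y)\bigr)\bigl(\partial_{x_1}\partial_{y_2}-\partial_{x_2}\partial_{y_1}\bigr)|x-y|^{-1}$, which vanish pointwise because $\partial_{x_k}|x-y|^{-1} = -\partial_{y_k}|x-y|^{-1}$ renders $\partial_{x_i}\partial_{y_j}|x-y|^{-1}$ symmetric in $(i,j)$.

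\medskip

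\textbf{Inequality and obstacle.} Granted the identity, the lower bound $\geq\tfrac12\iint\frac{\nabla\cdot f(x)\,\nabla\cdot f(y)}{|x-y|}\,dxdy$ will follow by showing that both the divergence and curl integrals are individually non-negative on $\t Q_\ell\times\t P_\ell$. This non-negativity stems from the positive-definiteness of the Riesz kernel $|x-y|^{-1}$ in $\R^2$ (its Fourier transform $2\pi/|\xi|$ is positive): via the Green-function representation $(-\Delta_{Q_\ell})^{-1/2}(x,y)\propto \sum_{n\in\Z}|x-y-n\ell e_2|^{-1}$ on the strip $\R\times\T_\ell$, a truncation argument relates the restricted integrals to Plancherel norms on $Q_\ell$ that are manifestly non-negative (the factor $\tfrac12$ then provides convenient slack). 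The main obstacle is the justification of the integrations by parts above: the kernel $|x-y|^{-3}$ is non-integrable near the diagonal in the $n=0$ copy of $\t Q_\ell$ after decomposing $\t P_\ell$ into periods, and the IBP a priori produces boundary contributions on the horizontal edges $\{x_2\in\{0,\ell\}\}$ and $\{y_2\in\{j\ell,k\ell\}\}$. I expect both issues to be resolved by mollifying the kernel as $(|x-y|^2+\eps^2)^{-3/2}$ and sending $\eps\to 0$, combined with a careful pairing of the endpoint contributions using the identity $f(\cdot,0)=f(\cdot,\ell)$, which matches the $x_2$-boundary terms against the $y_2$-boundary terms between adjacent copies of the fundamental domain.
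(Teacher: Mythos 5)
Your derivation of the \emph{identity} is a legitimate alternative route to the paper's: you pass through the Dirichlet-type form $\iint Df(x):Df(y)\,|x-y|^{-1}$ (using $\Delta|z|^{-1}=|z|^{-3}$ in $2$d) and then split off the divergence and curl parts via the $2\times 2$ trace/antisymmetric/cofactor identity, killing the cofactor term by a null-Lagrangian argument ($\partial_{x_i}\partial_{y_j}|x-y|^{-1}$ symmetric in $(i,j)$). The paper instead integrates by parts the two bilinear forms directly and lands on the pointwise representation \eqref{thesam-in}, in which $\iint\frac{(\nabla\cdot f)(x)(\nabla\cdot f)(y)}{|x-y|}$ is rewritten as $\iint|x-y|^{-3}\big(|N|^2-\tfrac12|T|^2\big)$, where $N,T$ are the components of $f(y)-f(x)$ parallel and perpendicular to $y-x$ (and the analogous formula for the curl term with $N,T$ exchanged). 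Both routes do the same double integration by parts over $\t Q_\ell\times\t P_\ell$, so the diagonal/boundary issues you flag are common to both and your mollification-plus-periodic-pairing plan is a reasonable way to make them precise.

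The genuine gap is in your proof of the \emph{inequality}. You propose to deduce it from the claim that the divergence and curl integrals are \emph{individually} nonnegative on $\t Q_\ell\times\t P_\ell$, via positive-definiteness of the Riesz kernel and a truncation argument. This claim is false for general finite $j<k$: take $j=1$, $k=2$ and $f=(0,\phi)$ with $\phi\ge 0$ a bump supported well inside the fundamental domain. Then, after shifting $y$ by $\ell e_2$ and integrating by parts (legitimately, since $\phi$ is compactly supported inside the cell),
\begin{align}
\iint_{\t Q_\ell\times\t P_\ell}\frac{(\nabla\cdot f)(x)(\nabla\cdot f)(y)}{|x-y|}\,dxdy
\ = \ \iint \phi(x)\phi(y)\Big(\frac{1}{|z|^{3}}-\frac{3z_2^{2}}{|z|^{5}}\Big)\Big|_{z=x-y-\ell e_2}\,dxdy
\ \approx\ -\frac{2}{\ell^{3}}\Big(\int\phi\Big)^{2}<0,
\end{align}
so the div--div form is negative (and no limit is taken in the lemma: the inequality must hold exactly for every finite $j<k$, so "slack from the factor $\tfrac12$" in a truncation limit cannot repair this; note also that the periodized kernel $\sum_n|x-y-n\ell e_2|^{-1}$ you invoke diverges and only makes sense through the renormalized limit of Lemma \ref{lem-sing_H12}, which moreover requires zero mean). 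The paper's inequality does not use any positivity of the two forms: it follows pointwise from \eqref{thesam-in}, since the left-hand side of \eqref{thesam} minus $\tfrac12\iint\frac{(\nabla\cdot f)(x)(\nabla\cdot f)(y)}{|x-y|}$ equals $\tfrac34\iint|T|^2|x-y|^{-3}\ge 0$. To close your argument you would need to supplement your cofactor identity with such a pointwise normal/tangential decomposition of the div--div form (or an equivalent device); the positivity-of-each-term strategy cannot work as stated.
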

\begin{proof}
  Let $I$ (resp. $J$) be first (second) integral on the second line of
  \eqref{thesam}.  We recall the identity $D (\frac z{|z|^3})$ =
  $D^t (\frac z{|z|^3})$ $=$ $\frac 1{|z|^5}(z^\perp \otimes z^\perp$ $-$
  $2 z \otimes z)$ where $z^\perp := (-z_2,z_1)$.  Integrating by parts in $x$
  and $y$, since $\nabla_z (\frac 1{|z|}) = - \frac z{|z|^3}$ and since
  $\nabla (f \cdot g) = (D^t g) f + (D^tf) g$ then yields
 \begin{align}
   I 
   &= - \iint_{\t Q_\ell \times \t P_\ell} f(x) \cdot  \frac{y-x}{|x-y|^3} \big[\nabla_y \cdot (f(y) - f(x))\big] \\
   &=  - \iint_{\t Q_\ell \times \t P_\ell} \frac{2  f(x)\cdot (y-x) (f(y)-f(x))\cdot (y-x)}{|x-y|^5}  %
     +  \frac{f(x) \cdot (y-x)^\perp (f(y)-f(x))\cdot (y-x)^\perp}{|x-y|^5}.
 \end{align}
 First integrating in $y$ then in $x$ similarly yields
 \begin{align}
   I&= \ \iint_{\t Q_\ell \times \t P_\ell} \frac{ 2 f(y)\cdot (y-x) (f(y)-f(x))\cdot (y-x)}{|x-y|^5}  - \frac{f(y) \cdot (y-x)^\perp (f(y)-f(x))\cdot (y-x)^\perp}{|x-y|^5}.
 \end{align}
 Taking the sum of these two expressions, one gets
 \begin{align} \label{thesam-in} %
   I & = \ \iint_{\t Q_\ell \times \t P_\ell} \frac{1}{|x-y|^3} \Big(\Big|(f(y)-f(x))\cdot \frac{y-x}{|y-x|}\Big|^2 %
         -  \frac 12 \Big|(f(y)-f(x))\cdot \big(\frac{y-x}{|y-x|} \big)^\perp \Big|^2 \Big). 
 \end{align}
 Since $\nabla \times f = \nabla \cdot f^\perp$, the same calculation as before,
 replacing $f$ by $f^\perp$, yields
 \begin{align}
   J \ & \ = \ \iint_{\t Q_\ell \times \t P_\ell} \frac{1}{|x-y|^3} \Big(\Big|(f(y)-f(x))\cdot \big(\frac{y-x}{|y-x|} \big)^\perp\Big|^2 %
         -  \frac 12 \Big|(f(y)-f(x))\cdot \frac{y-x}{|y-x|} \Big|^2 \Big) \ dxdy. 
 \end{align}
 The identity \eqref{thesam} follows by taking the sum of the last two
 equations. The inequality in \eqref{thesam} follows from
   \eqref{thesam-in}.
\end{proof} 
We have the following singular integral characterization for the magnetostatic
energy:
\begin{lemma}[Integral representations of magnetostatic
    energy]\label{lem-sing_H12}
  Let $\sigma\in L^2(Q_\ell)$ with
  \begin{align} \label{mz-app} %
     \quad
    \text{with } \spt \sig \CUS Q_\ell \quad \text{ and } \quad
    \int_{Q_\ell} \sig \ dx \ = \ 0.
  \end{align}
  Then there is a unique $q \in H^1(Q_\ell;\R^2)$ with
  $\nabla \cdot q = \sig$ and $\nabla \times q = 0$ such that
  \begin{align}\label{above-RHS}
    \int_{Q_\ell} \big| |\nabla|^{-\frac 12}\sigma\big|^2 \ dx \ %
    &= \  \frac{1}{4\pi}  \int_{Q_\ell} \int_{\R^2} \frac{|q(x+h) - q(x)|^2}{|h|^3} \ dh dx \\ %
    &= \  \frac{1}{2\pi}  \lim_{N \to \infty, N \in \N} \int_{Q_\ell} \int_{\R\times [-N\ell,N\ell]}\frac{\sigma(x+h)\sigma(x)}{|h|} \ dh dx. \label{below-RHS}
  \end{align}
\end{lemma}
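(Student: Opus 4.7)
The plan is to construct $q$ via the Fourier transform on $Q_\ell$, deduce the first equality from Plancherel combined with the finite difference representation of the $\dot H^{1/2}$-norm recorded just before the lemma, and then derive the second equality by invoking Lemma \ref{lem-tworep} applied to $f = q$.

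First I would define $q$ by $\widehat q(\xi) := -i\xi|\xi|^{-2}\widehat\sigma(\xi)$ for $\xi \neq 0$ and $\widehat q(0) := 0$. The hypothesis $\int_{Q_\ell}\sigma\,dx = 0$ forces $\widehat\sigma(0) = 0$, and $\spt\sigma \Subset Q_\ell$ makes $\widehat\sigma$ smooth at the origin with $\widehat\sigma(\xi) = O(|\xi|)$; together with $|\xi| \geq 2\pi/\ell$ on the remaining modes of the form $(0,\xi_2)$, these estimates ensure $q \in L^2$, and $\int|\xi|^2|\widehat q|^2\, d\xi = \|\sigma\|_{L^2}^2$ shows $q \in H^1(Q_\ell;\R^2)$. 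By construction, $\nabla \cdot q = \sigma$ and $\nabla \times q = 0$. Uniqueness holds because the difference $w$ of two candidates lies in $H^1$ with $\widehat w(\xi) \perp \xi$ and $\widehat w(\xi)\parallel \xi$ for every $\xi \neq 0$, forcing $\widehat w \equiv 0$, as constants do not belong to $L^2(Q_\ell)$ unless zero.

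For the first equality, Plancherel together with $|\widehat q|^2 = |\widehat\sigma|^2/|\xi|^2$ gives
\[
  \int_{Q_\ell}\big||\nabla|^{-\tfrac 12}\sigma\big|^2\,dx
  = \int \frac{|\widehat\sigma|^2}{|\xi|}\,d\xi
  = \int |\xi|\,|\widehat q|^2\,d\xi
  = \int_{Q_\ell}\big||\nabla|^{\tfrac 12}q\big|^2\,dx,
\]
and applying the finite difference identity for the $\dot H^{1/2}$-norm componentwise to $q$ yields \eqref{above-RHS}. For the second equality, I would apply Lemma \ref{lem-tworep} with $f = q$, $\widetilde Q_\ell = \R\times[0,\ell)$ and $\widetilde P_\ell = \R\times[-N\ell, N\ell)$. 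Since $\nabla \times q = 0$, the curl contribution in Lemma \ref{lem-tworep} drops out, giving
\[
  \frac 12\iint_{\widetilde Q_\ell \times \widetilde P_\ell}\frac{|q(y)-q(x)|^2}{|x-y|^3}\,dxdy
  = \iint_{\widetilde Q_\ell \times \widetilde P_\ell}\frac{\sigma(x)\sigma(y)}{|x-y|}\,dxdy.
\]
Passing $N \to \infty$, the left-hand side converges to $\frac 12\int_{Q_\ell}\int_{\R^2}|q(x+h)-q(x)|^2/|h|^3\,dh\,dx = 2\pi\int_{Q_\ell}||\nabla|^{-1/2}\sigma|^2\,dx$ by the first equality, while the change of variables $h = y-x$ converts the right-hand side into the limit in \eqref{below-RHS} (up to bounded boundary strips of width $O(\ell)$ that do not contribute in the limit).

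The main obstacle is that Lemma \ref{lem-tworep} is proved for $f\in C_c^\infty$, whereas $q$ only has Coulomb-type spatial decay in the $x_1$-direction. I would handle this by a two-step approximation: first mollify $\sigma$ to $\sigma_n \in C_c^\infty(Q_\ell)$ with $\int\sigma_n = 0$ and construct the corresponding $q_n$; then multiply $q_n$ by a cut-off $\chi_R(x_1)$ supported in $|x_1|\leq R$ to land in $C_c^\infty$. The zero-mean of $\sigma_n$ provides enough decay of $q_n$ at infinity that the boundary contributions from the cut-off vanish as $R\to\infty$; afterwards $n\to\infty$ gives the identity for $q$ via $\sigma_n \to \sigma$ in $L^2$ and the corresponding convergence $q_n \to q$ in $H^1$. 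The same zero-mean hypothesis is essential for the convergence of the $N\to\infty$ limit on the right of \eqref{below-RHS}, since otherwise the $1/|h|$ kernel combined with the periodicity of $\sigma$ in $x_2$ would produce a logarithmically divergent contribution from the lattice of periodic copies of $\spt\sigma$.
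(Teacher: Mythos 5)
Your proposal is correct and follows essentially the same route as the paper: define $q$ through $\widehat q = -i\xi|\xi|^{-2}\widehat\sigma$ (using $\widehat\sigma(0)=0$ and the smoothness of $\widehat\sigma$ from the compact support), get \eqref{above-RHS} from Plancherel together with \eqref{rep-h12}, and get \eqref{below-RHS} from Lemma \ref{lem-tworep} applied to $f=q$, whose curl term vanishes. The only difference is that you spell out the mollification/cut-off step needed because Lemma \ref{lem-tworep} is stated for $C_c^\infty$ fields while $q$ is not compactly supported (in fact, on the cylinder $Q_\ell$ the zero-mean condition even gives exponential decay of $q$ in $x_1$, so your truncation errors vanish easily); the paper leaves this approximation implicit.
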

\begin{proof}
  By assumption \eqref{mz-app} we have $\widehat \sig(0) = 0$ and
  $\nabla \widehat \sig \in L^\infty(\R \times \frac {2\pi}{\ell} \Z)$.
  This implies that $q \in H^1(Q_\ell;\R^2)$, where $q$ is defined by its
  Fourier transform $\widehat{q} := -i \frac \xi{|\xi|^2} \widehat \sig$.  By
  construction $q$ satisfies $\nabla \cdot q = \sig$ and $\nabla \times q =
  0$. This solution is unique by the uniqueness of the Helmholtz
  decomposition. By \eqref{frac-sob}, since
    $|\widehat \sig| = |\xi| |\widehat q|$ and by \eqref{rep-h12} we then get
  \begin{align}
    \int_{Q_\ell} ||\nabla|^{-\frac 12} \sig|^2 \ d\xi \ %
    \lupref{frac-sob}= \ \int_{\R \times \frac{2\pi}{\ell} \Z} |\xi| |\widehat {q}|^2 \ d\xi \ 
    \lupref{rep-h12}= \ \frac 1{4\pi} \int_{Q_\ell} \int_{\R^2} \frac{|q(x+h)-q(x)|^2}{|h|^3} \ dh dx.
  \end{align}
  Together with Lemma \ref{lem-tworep} this yields \eqref{above-RHS}. 
\end{proof}

\textbf{Acknowledgements:} We are grateful to the referee for carefully reading
the file and his useful comments. We also thank J. Fabiszisky for carefully
proofreading and drawing some of the pictures. H.~Kn\"upfer was partially
supported by the German Research Foundation (DFG) by the project \#392124319 and
under Germany's Excellence Strategy – EXC-2181/1 – 390900948. W.~Shi was
partially supported by the German Research Foundation (DFG) by the project SH
1403/1-1.  

\small \bibliographystyle{plain}
\bibliography{bib,zigzag}

\end{document}